\def\thm@space@setup{%
 \thm@preskip=\parskip \thm@postskip=0pt
}
\def\th@remark{%
  \thm@headfont{\itshape}%
  \normalfont 
  \thm@preskip\parskip \thm@postskip=0pt
}
\renewcommand{\PrintDOI}[1]{%
  \href{http://dx.doi.org/#1}{{\tt DOI:#1}}%
}
\renewcommand{\eprint}[1]{#1}
\numberwithin{equation}{section}
\DeclareSymbolFontAlphabet{\mathbb}{AMSb}	
\DeclareSymbolFontAlphabet{\mathbbl}{bbold}	
\newtheorem{Theorem}{Theorem}[section]
\newtheorem*{Theorem*}{Theorem}
\newtheorem{Def}[Theorem]{Definition}
\newtheorem*{Def*}{Definition}
\newtheorem{Lem}[Theorem]{Lemma}
\newtheorem{Prop}[Theorem]{Proposition}
\newtheorem{Rem}[Theorem]{Remark}
\newcommand\bp{\begin{proof}}
\newcommand\ep{\end{proof}}
\mathchardef\mhyph="2D
\DeclareMathOperator{\Hom}{\mathrm{Hom}}
\DeclareMathOperator{\id}{\mathrm{id}}
\DeclareMathOperator{\Mor}{\mathrm{Mor}}
\DeclareMathOperator{\Rep}{\mathrm{Rep}}
\DeclareMathOperator{\Irr}{\mathrm{Irr}}
\DeclareMathOperator{\rd}{\mathrm{d}\!}
\DeclareMathOperator{\coarse}{\mathrm{coarse}}
\DeclareMathOperator{\fd}{\mathrm{fd}}
\newcommand{\op}{\mathrm{op}}
\newcommand{\msN}{\mathscr{N}}
\newcommand{\msU}{\mathscr{U}}
\newcommand{\mcA}{\mathcal{A}}
\newcommand{\mcB}{\mathcal{B}}
\newcommand{\mcC}{\mathcal{C}}
\newcommand{\mcD}{\mathcal{D}}
\newcommand{\Hsp}{\mathcal{H}}
\newcommand{\Gsp}{\mathcal{G}}
\newcommand{\mcK}{\mathcal{K}}
\newcommand{\mcM}{\mathcal{M}}
\newcommand{\mcO}{\mathcal{O}}
\newcommand{\mcT}{\mathcal{T}}
\newcommand{\mcU}{\mathcal{U}}
\newcommand{\flip}{\mathrm{flip}}
\newcommand{\conj}{\mathrm{conj}}
\newcommand{\C}{\mathbb{C}}
\newcommand{\G}{\mathbb{G}}
\newcommand{\Hh}{\mathbb{H}}
\newcommand{\N}{\mathbb{N}}
\newcommand{\R}{\mathbb{R}}
\newcommand{\opp}{\mathrm{op}}
\newcommand{\trace}{\mathrm{tr}}
\newcommand{\Corr}{\mathrm{Corr}}
\begin{document}

\title{Amenable actions of compact and discrete quantum groups on von Neumann algebras}
\author{K. De Commer and J. De Ro}
\address{Vrije Universiteit Brussel}
\email{kenny.de.commer@vub.be}
\email{joeri.ludo.de.ro@vub.be}

\begin{abstract} Let $\G$ be a compact or a discrete quantum group and $A\subseteq B$ an inclusion of $\sigma$-finite $\G$-dynamical von Neumann algebras. We prove that the $\G$-inclusion $A\subseteq B$ is strongly equivariantly amenable if and only if it is equivariantly amenable, using techniques from the theory of non-commutative $L^p$-spaces. In particular, if $(A, \alpha)$ is a $\G$-dynamical von Neumann algebra with $A$ $\sigma$-finite, the action $\alpha: A \curvearrowleft \G$ is strongly (inner) amenable if and only if the action $\alpha: A \curvearrowleft \G$ is (inner) amenable.  This result can be seen as a dynamical generalization of Tomatsu's result on the amenability/co-amenability duality for discrete/compact quantum groups. We also provide the first explicit examples of amenable discrete quantum groups that act non-amenably on a von Neumann algebra.
\end{abstract}

\maketitle

\section{Introduction and Preliminaries}

The notion of Zimmer amenability of an action of a locally compact group $G$ on a von Neumann algebra $A$ was introduced and studied in the papers \cites{AD79, AD82}. It plays an important role in the study of the von Neumann algebra crossed product $A \rtimes G$. For example, if the von Neumann algebra $A$ is injective and the action $A \curvearrowleft G$ is amenable, then $A\rtimes G$ is an injective von Neumann algebra. Further, the notion of Zimmer amenability provides a bridge to the notion of amenability for $C^*$-dynamical systems \cites{BEW21,OS21,BC22}. For these reasons, it is natural to generalize the notion of Zimmer amenability to the setting of locally compact quantum groups. This was first done and studied in \cite{Moa18} for discrete quantum groups and in \cites{DCDR23, DR24} for general locally compact quantum groups. In this paper, we continue the study of the notion of amenability of actions, but we restrict ourselves to the setting of compact and discrete quantum groups. We will make extensive use of the general theory developed in \cite{DCDR23}.

More concretely, in \cite{DCDR23}, the notion of equivariant correspondence was introduced. Given a locally compact quantum group $\G$ and two $\G$-$W^*$-dynamical systems $(A, \alpha)$ and $(B, \beta)$ (= von Neumann algebras with an action $\alpha: A \rightarrow A\bar{\otimes} L^{\infty}(\G)$ by $\G$), a $\G$-$A$-$B$-correspondence consists of a Hilbert space $\mathcal{H}$ with an $A$-$B$-bimodule structure by unital normal $*$-preserving maps, and equipped with a unitary $\G$-representation which is compatible with the $A$-$B$-bimodule structure \cite{DCDR23}*{Definition 0.4}. This notion was directly inspired by a very similar notion in the setting of compact quantum groups, introduced in \cite{AS21}. We will comment in more detail on the precise connection in Section \ref{SecEwcc}. We will write $\Corr^{\G}(A,B)$ for the 
W$^*$-category of all such $\G$-equivariant $A$-$B$-correspondences.

There is a natural notion of weak containment $\preccurlyeq$ for $\G$-correspondences \cite{DCDR23}*{Definition 3.1}. This notion can be used to formulate several important approximation properties for dynamical von Neumann algebras. For example, if $A\subseteq B$ is a $\G$-equivariant inclusion, we call it \emph{strongly equivariantly amenable} if we have the weak containment
$${ }_A L^2(A)_A\preccurlyeq {}_{A} L^2(B)_{A}$$
of $\G$-$A$-$A$-correspondences \cite{DCDR23}*{Definition 4.1}.

Given a $\G$-$A$-$A$-correspondence $\mathcal{H}$, it is in general a hard problem to detect if we have the weak containment ${}_A L^2(A)_A\preccurlyeq \mathcal{H}$ of $\G$-$A$-$A$-correspondences. The second section of this paper is devoted to this problem in the setting of compact quantum groups. 

More precisely, let $\G$ be a compact quantum group with associated Hopf $*$-algebra $(\mcO(\G),\Delta)$. Given a right $\G$-$W^*$-algebra $(A, \alpha)$, we write 
\begin{equation}\label{EqAlgCoreOth}
\mathcal{A}:= \{a\in A: \alpha(a)\in A \odot \mathcal{O}(\G)\}
\end{equation}
for its algebraic core (with $\odot$ the algebraic tensor product). The action $\alpha: A\curvearrowleft \G$ induces a left action $\alpha_c: \G \curvearrowright \overline{\mathcal{A}}$ where $\overline{\mathcal{A}}$ is the conjugate algebra of $\mathcal{A}$. We then define the \emph{cotensor product} as the $*$-subalgebra
$$\mathcal{A}\square \overline{\mathcal{A}}:= \{z \in \mathcal{A}\odot \overline{\mathcal{A}}: (\alpha \odot \id)(z) = (\id \odot \alpha_c)(z)\}.$$
We will prove that every $\G$-$A$-$A$-correspondence $\mathcal{H}$ canonically induces a unital $*$-representation $\theta_{\square}^{\mathcal{H}}$ of the $*$-algebra $\mathcal{A}\square \overline{\mathcal{A}}$ on a certain associated Hilbert space. We then have the following result:

\textbf{Proposition \ref{weak containment}.}\textit{  Let $\G$ be a compact quantum group, let $A$ be a $\G$-$W^*$-algebra and $\mathcal{H}\in \Corr^\G(A,A)$. The following statements are equivalent:
\begin{enumerate}
    \item $L^2(A)\preccurlyeq \mathcal{H}$ as $\G$-$A$-$A$-correspondences.
    \item $\theta_\square^{L^2(A)}\preccurlyeq \theta_\square^{\mathcal{H}}$, i.e. $\|\theta_\square^{L^2(A)}(x)\| \le \|\theta_\square^{\mathcal{H}}(x)\|$ for all $x\in \mathcal{A}\square \overline{\mathcal{A}}.$
\end{enumerate}}
This significant result provides a more algebraic way to see if a given equivariant correspondence weakly contains the trivial correspondence. It provides the basis for the remainder of the paper.

For a locally compact quantum group $\G$, let us also recall that a $\G$-equivariant normal inclusion $A\subseteq B$ is called \emph{equivariantly amenable} if there exists a (not necessarily normal) $\G$-equivariant ucp conditional expectation $E: B\to A$. 
As the terminology suggests, strong equivariant amenability of a $\G$-equivariant inclusion $A\subseteq B$ implies its equivariant amenability, but the converse is an important open question. In the case of compact quantum groups however, we make use of Proposition \ref{weak containment} to establish the following general result:

\textbf{Theorem \ref{TheoMainAmenab}.} \textit{Let $\G$ be a compact quantum group. Assume $A \subseteq B$ is an equivariant unital normal inclusion of $\sigma$-finite $\G$-dynamical von Neumann algebras. Then the following are equivalent: 
\begin{enumerate}
\item $A \subseteq B$ is equivariantly amenable. 
\item $A \subseteq B$ is strongly equivariantly amenable. 
\end{enumerate}}
In the non-equivariant setting, this was proven in \cite{BMO20}*{Corollary A.2}. We use Proposition \ref{weak containment} and 
 the theory of non-commutative $L^p$-spaces \cite{JP10} to lift this result to the equivariant setting.

 Using duality arguments, we then obtain the same result for discrete quantum groups:

 \textbf{Theorem \ref{discretemain}} \textit{Let $\mathbbl{\Gamma}$ be a discrete quantum group and assume $A \subseteq B$ is an equivariant unital normal inclusion of $\sigma$-finite $\mathbbl{\Gamma}$-dynamical von Neumann algebras. Then the following are equivalent: }
\begin{enumerate}
\item \textit{$A \subseteq B$ is equivariantly amenable. }
\item \textit{$A \subseteq B$ is strongly equivariantly amenable. }
\end{enumerate}
 
Let us now recall from \cite{DCDR23} the following definitions:  for a locally compact quantum group $\G$, a $\G$-$W^*$-algebra $(A, \alpha)$ is said to be 
\begin{itemize}
\item \emph{(strongly) amenable} if the $\G$-inclusion $\alpha(A)\subseteq A \bar{\otimes}L^\infty(\G)$ is (strongly) equivariantly amenable,  and 
\item  \emph{(strongly) inner amenable} if the $\G$-inclusion $\alpha(A)\subseteq A\rtimes_\alpha \G$ is (strongly) equivariantly amenable (where $A\rtimes_{\alpha}\G$ is equipped with the adjoint $\G$-action).
\end{itemize}
If these conditions hold for $A= \C$, we say that $\G$ is (strongly) amenable, resp.\ (strongly) inner amenable. Note that this is consistent with the usual definitions for these notions \cites{Tom06,Cr19}.

As an application of Theorem \ref{TheoMainAmenab}, we then obtain:

\textbf{Theorem \ref{application}} \textit{Let $\G$ be a compact quantum group and let $(A, \alpha)$ be a $\sigma$-finite $\G$-$W^*$-algebra. The following statements hold:}
    \begin{enumerate}
        \item \textit{$(A, \alpha)$ is strongly amenable if and only if $(A, \alpha)$ is amenable.}
        \item \textit{$(A, \alpha)$ is strongly inner amenable if and only if $(A, \alpha)$ is inner amenable.}
    \end{enumerate}

    By duality, we then also obtain:

 \textbf{Theorem \ref{application2}} \textit{Let $\mathbbl{\Gamma}$ be a discrete quantum group and $(A, \alpha)$ a $\sigma$-finite $\mathbbl{\Gamma}$-$W^*$-algebra. The following statements hold:}
    \begin{enumerate}
        \item \textit{$(A, \alpha)$ is strongly amenable if and only if $(A, \alpha)$ is amenable.}
        \item \textit{$(A, \alpha)$ is strongly inner amenable if and only if $(A, \alpha)$ is inner amenable.}
    \end{enumerate}
\textit{In particular, $\mathbbl{\Gamma}$ is strongly inner amenable if and only if $\mathbbl{\Gamma}$ is inner amenable.}

Note that applying Theorem \ref{application2}.(1) to the trivial action $\mathbb{C}\curvearrowleft \mathbbl{\Gamma}$ recovers Tomatsu's result \cite{Tom06}, which asserts that $\check{\mathbbl{\Gamma}}$ is co-amenable if and only if $\mathbbl{\Gamma}$ is amenable. Therefore, Theorem \ref{application2} can be seen as a dynamical version of Tomatsu's result.

In \cite{AD79}*{Proposition 3.6}, it was proven that every action of an amenable locally compact group  on a von Neumann algebra is automatically amenable. An analogue of this statement is no longer true in the (non-Kac) quantum setting:

\textbf{Theorem \ref{TheoMain2}.}\textit{ Let $\mathbb{H}$ be a compact quantum group and $\G := \mathbb{H}^{\op}\times \mathbb{H}$. Consider the action
$$\alpha: L^\infty(\mathbb{H})\to L^\infty(\mathbb{H})\bar{\otimes} L^\infty(\mathbb{\G})= L^\infty(\mathbb{H})^{\bar{\otimes} 3}: z \mapsto \Delta_{\mathbb{H}}^{(2)}(z)_{213}.$$The action $\alpha: L^\infty(\mathbb{H})\curvearrowleft \G$ is (strongly) amenable if and only if $\mathbb{H}$ is of Kac type.}

The proof strategy for this theorem is summarised at the beginning of section 4. We also provide another strategy that allows us to show that the action of $SU_q(2)$ on a non-standard Podleś sphere is not amenable, see Proposition \ref{Podlesamenable}.

Further, note that if $\G$ is a compact quantum group and the action $\alpha: A \curvearrowleft \G$ fails to be amenable, then the dual action
$A\rtimes_\alpha \G \curvearrowleft \check{\G}$
also fails to be amenable. Therefore, from the discussion above, we find explicit examples of amenable discrete quantum groups acting non-amenably on a von Neumann algebra. Note that in \cite{Moa18}*{Theorem 4.7}, it was claimed that every action of an amenable discrete quantum group is amenable. There is no contradiction however, as the proof in \cite{Moa18} only works in the context of unimodular discrete quantum groups, as was remarked in \cite{AK24}.

\textbf{Notation and conventions.} We denote by $\odot$ the algebraic tensor product over $\C$, by $\otimes$ the tensor product between Hilbert spaces and the minimal tensor product between C$^*$-algebras, and by $\bar{\otimes}$ the spatial tensor product between von Neumann algebras. We freely switch between the terminology W$^*$-algebra and von Neuman algebra.

For $\Hsp,\Gsp$ Hilbert spaces, we write $\mcB(\Hsp,\Gsp)$, resp.\ $\mcB(\Hsp)$, for the space of bounded operators from $\Hsp$ to $\Gsp$, resp.\ from $\Hsp$ to $\Hsp$, and $\mcK(\Hsp,\Gsp)$, resp.\ $\mcK(\Hsp)$, for the space of compact operators. 

If $\Hsp$ is a Hilbert space and $\xi,\eta\in \Hsp$, we write $e_{\xi,\eta}\in B(\Hsp)$ and $\omega_{\xi,\eta} \in B(\Hsp)_*$ as defined by 
\[
e_{\xi,\eta}\zeta = \langle \eta,\zeta\rangle \xi,\qquad \omega_{\xi,\eta}(x) = \langle \xi,x\eta\rangle,\qquad x\in B(\Hsp). 
\]
By $[-]^{\tau}$, we denote the $\tau$-closure of the linear span of a subset in a topological vector space $(V,\tau)$. When this is a fixed norm-topology, we abbreviate to $[-]$.  

\subsection{Von Neumann algebras. }If $A$ is a von Neumann algebra, we denote the predual of normal functionals on $A$ by $A_*$. We also denote by $L^2(A)$ its associated standard Hilbert space with modular conjugation $J_A$ and its $A$-bimodule structure provided by the standard normal $*$-representation $\pi_A$ and the standard normal $*$-anti-representation $\rho_A$, related by 
\[
\rho_A(a) = J_A\pi_A(a)^*J_A.
\]
If $\varphi$ is an nsf (normal, semi-finite, faithful) weight on $A$, we denote the associated GNS-map by 
\[
\Lambda_{\varphi}: \msN_{\varphi}\rightarrow L^2(A),\qquad \msN_{\varphi} = \{a\in A\mid \varphi(a^*a)<\infty\},
\]
and the associated modular operator and modular one-parameter group of automorphisms by 
\[
\nabla_{\varphi}^{it}: L^2(A) \rightarrow L^2(A),\qquad \sigma_t^{\varphi}: A \rightarrow A,\qquad t\in \R.
\]
If $\varphi$ is a normal \emph{state}, we denote $\xi_{\varphi} = \Lambda_{\varphi}(1)$ the associated GNS-vector. 

Whenever the weight/state $\varphi$ is canonically fixed, we also index the above modular data with $A$ itself in stead of $\varphi$, or with nothing at all if also $A$ is unambiguous, so 
\[
\nabla^{it} = \nabla_{A}^{it} = \nabla_{\varphi}^{it},\qquad  \sigma_t = \sigma_t^{A} = \sigma_t^{\varphi},\qquad \xi_{A} = \xi_{\varphi}.
\]
We denote by $\overline{A} = \{\overline{a}\mid a\in A\}$ the complex conjugate von Neumann algebra, i.e.\ all structure is as for $A$ except that the complex conjugate linear structure is used. 

\subsection{Compact quantum groups} 
We refer to \cite{NT14} for basic results on compact quantum groups. 

Let $\G = (M,\Delta)$ be a compact quantum group. We write the associated CQG Hopf $*$-algebra/von Neumann algebra/reduced C$^*$-algebra/universal C$^*$-algebra/Hilbert space as 
\[
\mcM = \mcO(\G),\quad M = L^{\infty}(\G),\quad \mcM_r = C_r(\G),\quad \mcM_u = C_u(\G),\quad L^2(M) = L^2(\G),
\]
using whatever notation feels more convenient. In all instances, we write the associated coproduct as $\Delta$ and the associated invariant state as $\Phi$. We freely index the associated structures below with $\G$ whenever we need disambiguation. 

We denote by $\varepsilon$ the counit and by $S$ the antipode of $(\mcM,\Delta)$. We use in this case the (unsummed) Sweedler notation 
\[
\Delta(x) = x_{(1)}\otimes x_{(2)},\qquad x\in \mcM.
\]

We denote by $\msU$ the full linear dual of $\mcM$. We endow it with the convolution $*$-algebra structure
\[
(\omega\chi)(x) = (\omega \otimes \chi)\Delta(x),\qquad \omega^*(x) = \overline{\omega(S(x)^*)},\qquad x\in \mcM,\quad \omega,\chi \in \msU.
\]
It has canonical $*$-subalgebras
\[
\mcU \subseteq W^*(\G)\subseteq \msU,
\]
where $W^*(\G)$ is the W$^*$-algebra
\[
W^*(\G) = \{x\in \msU \mid  \|x\|_{\infty}<\infty\},\qquad \|x\|_{\infty} = \sup_{\pi \textrm{ fin.-dim.} * \textrm{-rep. of }\msU} \|\pi(x)\|,
\]
and $\mcU$ is the (not necessarily unital) $*$-algebra
\[
\mcU = \{\Phi(-x)\mid x\in \mcM\} = \{\Phi(x-)\mid x\in \mcM\}.
\]

By normality of the Haar state $\Phi$, there is an obvious inclusion $\mathcal{U}\subseteq M_*$. This identification will be used in the sequel without further mention. 

We write $p_{\varepsilon}$ for the self-adjoint projection in $\mcU$ given by $\Phi$, so 
\begin{equation}\label{EqDefTrivRep}
p_{\varepsilon} = \Phi \in \mcU,\qquad p_{\varepsilon}\omega = \omega p_{\varepsilon} = \omega(1) p_\varepsilon,\qquad \forall \omega \in \msU.  
\end{equation}

The elements of $\mcM$ are analytic for the modular one-parameter group $\sigma_t$ of $(M,\Phi)$, and $\sigma_z(\mcM) \subseteq \mcM$ for all $z\in \C$. This leads to the \emph{Woronowicz characters} 
\begin{equation}\label{DefWorChar}
\delta^z := \varepsilon \circ \sigma_{iz} \in \msU.
\end{equation}
In turn, we get the following formulas for the modular automorphism group and \emph{scaling group} of $\mcM$:
\begin{equation}\label{EqFormSigmaScaling}
\sigma_{z}(x) =  \delta^{-iz/2}(x_{(1)})x_{(2)}\delta^{-iz/2}(x_{(3)}),\qquad \tau_{z}(x) =  \delta^{-iz/2}(x_{(1)})x_{(2)}\delta^{iz/2}(x_{(3)})\qquad x\in \mcM.
\end{equation}
Then $(\tau_t)_{t\in \R}$ extends to a point-$\sigma$-strong$^*$-continuous one-parameter group of automorphisms on $M$. 

We denote by $R: M \rightarrow M$ the unitary antipode. It is an involutive $*$-preserving anti-multiplicative linear isomorphism, preserving $\mcM$ and satisfying on $\mcM$ that 
\begin{equation}\label{EqFormAntipod}
S = \tau_{-i/2}\circ R = R\circ \tau_{-i/2},\qquad S^2 = \tau_{-i}. 
\end{equation}
We put 
\[
\varepsilon(\omega) = \omega(1),\qquad S(\omega) = \omega \circ S,\qquad R(\omega) = \omega\circ R,\qquad \omega \in \msU.
\]

Let $(\Rep_u(\G),\Hom_{\G}(-,-),\otimes)$ be the tensor W$^*$-category of unitary $\G$-representations. It can be viewed as either of the following (with morphisms bounded maps commuting with the respective structure): 
\begin{itemize}
\item The W$^*$-category of normal unital $*$-representations of $W^*(\G)$.
\item The W$^*$-category of non-degenerate $*$-representations $(\Hsp,\pi)$ of $\mcU$. 
\item The W$^*$-category of unitary corepresentations $U \in B(\Hsp)\bar{\otimes}M$ of $(M, \Delta)$, where 
\[
(\id\otimes \Delta)U = U_{12}U_{13}. 
\]
\end{itemize}
The first correspondence is by restriction, and the last correspondence is determined by 
\[
(\id\otimes \omega)U_{\pi} = \pi(\omega),\qquad \omega \in \mcU. 
\]
If $\mathcal{H}_\pi\in \Rep_u(\G)$ is a finite-dimensional unitary representation, we write $n_\pi:= \dim(\mathcal{H}_\pi)$ and we fix an orthonormal basis $\{e_i^\pi\}_{i=1}^{n_\pi}$ for $\mathcal{H}_\pi$.

The tensor product on $\Rep_u(\G)$ is determined by
\[
U_{\pi\otimes \pi'} = U_{\pi,13}U_{\pi',23}.
\]
If $\pi \in \Rep_{u}(\G)$ and $\xi,\eta\in \Hsp_{\pi}$, we write 
\[
U_{\pi}(\xi,\eta) = (\omega_{\xi,\eta}\otimes \id)U_{\pi}\in M
\]
for the associated \emph{matrix coefficient}. We have
\begin{align}
    \Delta(U_\pi(\xi, \eta)) = \sum_{j=1}^{n_\pi}U_\pi(\xi, e_j)\otimes U_\pi(e_j, \eta), \quad \varepsilon(U_\pi(\xi, \eta)) = \langle \xi, \eta\rangle, \quad S(U_\pi(\xi, \eta))= U_\pi(\eta, \xi)^*, \quad \xi, \eta \in \mathcal{H}_\pi.
\end{align}

We write the fixed points of a $\G$-representation as 
\[
\Hsp_{\pi}^{\G} = p_{\varepsilon}\Hsp_{\pi} = \{\xi\in \Hsp_{\pi} \mid \forall x\in \mcU: \pi(x) \xi= \varepsilon(x)\xi\},\qquad \pi \in \Rep_u(\G).
\]

We fix throughout the entire paper a choice of a maximal collection $\Irr(\G)$ of pairwise non-equivalent irreducible unitary $\G$-representations. In fact, there is an identification of $*$-algebras
\begin{equation}\label{identification}
    \mathscr{U}\cong \prod_{\pi\in \Irr(\G)} B(\mathcal{H}_\pi):  \omega \mapsto ((\id \odot \omega)(U_\pi))_{\pi\in \Irr(\G)}.
\end{equation}
We will frequently use the identification \eqref{identification} to view elements of $\mathscr{U}$ as acting on the Hilbert spaces $\mathcal{H}_\pi$ (where $\pi\in \Irr(\G)$). Concretely, an element $\omega \in \mathscr{U}$ can be represented as an element of $B(\mathcal{H}_\pi)$, uniquely determined by
\begin{equation}
    \langle \xi, \omega \eta\rangle = \omega(U_\pi(\xi, \eta)),  \quad \xi, \eta \in \mathcal{H}_\pi.
\end{equation}

If $(\Hsp_{\pi},\pi) \in \Rep_u(\G)$, its dual is the conjugate linear Hilbert space $\overline{\Hsp}_{\pi}$, endowed with the $*$-representation 
\[
\overline{\pi}(x)\overline{\xi} = \overline{\pi(R(x)^*) \xi},\qquad x\in \mcU, \quad \xi \in \mathcal{H}_\pi.
\]
With this definition for duals, the W$^*$-category $\Rep_u^{\fd}(\G)$ of finite-dimensional unitary $\G$-representations is rigid, with standard coevaluation maps in $\Rep_u^{\fd}(\G)$ determined by the vectors
\begin{equation}
t_{\pi}\in (\Hsp_{\pi}\otimes \Hsp_{\overline{\pi}})^{\G}=\Mor(\varepsilon,\pi\otimes \overline{\pi}),\qquad \langle t_{\pi},\xi \otimes \overline{\eta}\rangle = \langle \eta,\delta^{-1/4}\xi\rangle.
\end{equation}
We write 
\[
\dim_q(\pi) = \|t_{\pi}\|^2 = \|t_{\overline{\pi}}\|^2.
\]
Moreover, \begin{equation}
    U_\pi(\xi, \eta)^* = U_{\overline{\pi}}(\overline{\delta^{1/4}\xi}, \overline{\delta^{-1/4}\eta})= U_{\overline{\pi}}(\delta^{-1/4} \overline{\xi}, \delta^{1/4} \overline{\eta}), \quad \xi, \eta \in \mathcal{H}_\pi,
\end{equation}
and also
\begin{equation}
R(U_\pi(\xi, \eta))= U_{\overline{\pi}}(\overline{\eta}, \overline{\xi})= U_\pi(\delta^{-1/4}\eta, \delta^{1/4}\xi)^*, \quad \xi, \eta \in \mathcal{H}_\pi.
\end{equation}
We also have 
\begin{equation}\label{EqPresentSigma}
\sigma_z(U_{\pi}(\xi,\eta)) = U_{\pi}(\delta^{i\overline{z}/2}\xi,\delta^{-iz/2}\eta),\quad \tau_z(U_{\pi}(\xi,\eta)) = U_{\pi}(\delta^{i\overline{z}/2}\xi,\delta^{iz/2}\eta),\qquad \xi,\eta\in \Hsp_{\pi}.
\end{equation}

The \emph{Peter-Weyl orthogonality relations} for irreducible $\G$-representations $\pi,\pi'\in \Irr(\G)$ are given as follows:  
\begin{equation}\label{EqPW1}
\Phi(U_{\pi}(\xi_1,\eta_1)U_{\pi'}(\xi_2,\eta_2)^*) = \delta_{\pi,\pi'}\frac{\langle \xi_1,\xi_2\rangle \langle \eta_2,\delta^{-1/2}\eta_1\rangle}{\dim_q(\pi)},\qquad \xi_i,\eta_i\in \Hsp_{\pi},
\end{equation}
\begin{equation}\label{EqPW2}
\Phi(U_{\pi}(\xi_2,\eta_2)^*U_{\pi'}(\xi_1,\eta_1)) =\delta_{\pi,\pi'} \frac{\langle \xi_1,\delta^{1/2}\xi_2\rangle \langle \eta_2,\eta_1\rangle}{\dim_q(\pi)},\qquad \xi_i,\eta_i\in \Hsp_{\pi}.
\end{equation}

We write $W,V$ for respectively the left and right multiplicative unitary,
\begin{equation}\label{EqRightRegRep}
W^*(\Lambda(x)\otimes \Lambda(y)) = (\Lambda \otimes \Lambda)(\Delta(y)(x\otimes 1)),\quad V(\Lambda(x)\otimes \Lambda(y)) = (\Lambda \otimes \Lambda)(\Delta(x)(1\otimes y)),\qquad x,y\in M.
\end{equation}
Then $V$ is a unitary $\G$-representation in the sense above, and we obtain a faithful $*$-representation
\[
\check{\pi}(\omega) = (\id\otimes \omega)V,\quad \check{\pi}(\omega)\Lambda(x) = \Lambda(\omega(x_{(2)})x_{(1)}),\qquad \omega \in \mcU,x\in \mcM. 
\]
On the other hand, we also get a faithful $*$-representation
\[
\hat{\pi}(\omega) = (\omega\otimes \id)W,\quad \hat{\pi}(\omega)\Lambda(x) = \Lambda(\omega(S^{-1}(x_{(1)}))x_{(2)}),\qquad \omega\in \mcU,x\in \mcM. 
\]

The dual discrete quantum group $\mathbbl{\Gamma}=\check{\G}$ is given by the von Neumann algebra
$$\ell^\infty(\check{\G}):=[(\id \otimes \omega)(V): \omega \in L^\infty(\G)_*]^{\sigma\text{-weak}}$$
and the coproduct
$$\check{\Delta}: \ell^\infty(\check{\G})\to \ell^\infty(\check{\G})\bar{\otimes}\ell^\infty(\check{\G}): \check{x}\mapsto V^*(1\otimes \check{x})V.$$

It is a concretely implemented version of $W^*(\G)$, through the $*$-isomorphism
$W^*(\G)\cong \ell^\infty(\check{\G})$ uniquely determined by $\omega \cong \check{\pi}(\omega)$ for $\omega \in \mathcal{U}$. 
\section{Equivariant weak containment of correspondences via the cotensor product}\label{SecEwcc}

Throughout this section, let $\G = (M,\Delta)$ be a compact quantum group. Let $A$ be a von Neumann algebra. We mean by a \emph{right $\G$-action} on $A$ an isometric unital, normal $*$-homomorphism 
$\alpha: A\rightarrow A\bar{\otimes} M$ satisfying the coaction property $(\id \otimes \Delta)\alpha = (\alpha\otimes \id)\alpha$. We denote such a $\G$-action by $\alpha: A\curvearrowleft \G$. Similarly, we define the notion of a \emph{left $\G$-action} $\alpha: \G \curvearrowright A$.

One can switch between right and left $\G$-actions by 
\[
(A,\alpha) \leftrightarrow (\overline{A},\alpha_c),\qquad \alpha_c = (R(-)^* \bar{\otimes} \conj_A)\circ \flip\circ \alpha\circ\conj_{\overline{A}}
\]
and vice versa, where $\conj_A(a)=\overline{a}, \conj_{\overline{A}}(\overline{a}) = a$ and $\flip(a\otimes x) = x\otimes a$. 

We mean by right/left $\G$-W$^*$-algebra a von Neumann algebra $A$ equipped with a right/left $\G$-action. Both right and left $\G$-actions appear in this paper, although we will favour right actions. 

If $(A,\alpha)$ is a right/left $\G$-W$^*$-algebra, we denote the \emph{fixed point algebra} by
\[
\textrm{(right)}\quad A^\G = \{a\in A: \alpha(a)= a\otimes 1\},\qquad \textrm{(left)}\quad A^\G = \{a\in A: \alpha(a)= 1\otimes a\}.\] 

\subsection{Equivariant correspondences}

We briefly recall some of the theory developed in \cite{DCDR23}.

\begin{Def}\label{DefEqCorr}
Let $(A,\alpha),(B,\beta)$ be (right) $\G$-W$^*$-algebras. A \emph{$\G$-$A$-$B$-correspondence} $\Hsp = (\Hsp,\pi,\rho,U)$ consists of a Hilbert space $\Hsp$ with 
\begin{itemize}
\item a normal unital $*$-representation $\pi$ of $A$ on $\Hsp$, 
\item a normal unital anti-$*$-representation $\rho$ of $B$ on $\Hsp$, and 
\item a unitary representation $U \in B(\Hsp)\bar{\otimes}M$ of $\G$ on $\Hsp$,
\end{itemize} 
satisfying for all $a\in A$ and $b\in B$ the compatibility relations
\begin{equation}\label{EqDefCondGeqCorr}
\pi(a)\rho(b) = \rho(b)\pi(a),\qquad U(\pi(a)\otimes 1)U^* = (\pi\otimes \id)\alpha(a),\qquad  U^*(\rho(b)\otimes 1)U = (\rho \otimes R)(\beta(b)).
\end{equation}
We denote by $\Corr^{\G}(A,B)$ the W$^*$-category of $\G$-equivariant $A$-$B$-correspondences.
\end{Def}

As shown in \cite{DCDR23}, the theory of general $\G$-equivariant correspondences is governed by a particular C$^*$-algebra $C^{\G}(A,B)$, constructed as a separation-completion of 
\[
A_s \odot M_*\odot B_s, \qquad A_s = [(\id\otimes \omega)\alpha(a)\mid a\in A,\omega\in M_*]
\] 
under the seminorm 
\[
\|\sum_{i=1}^n a_i \otimes \omega_i \otimes b_i\|= \sup\{\|\sum_{i=1}^n \pi(a_i)U(\omega_i)\rho(b_i)\|\},\qquad n \in \N,a_i\in A_s,b_i \in B_s,\omega_i \in M_*,
\]
the supremum running over all $\G$-equivariant $A$-$B$-correspondences. The C$^*$-algebra structure of $C^{\G}(A,B)$ is determined by the fact that every $\G$-$A$-$B$-correspondence $\mathcal{H}$ induces a non-degenerate $*$-homomorphism
\begin{equation}\label{EqDefLowerTheta}
\theta_{\mathcal{H}}: C^\G(A,B)\to B(\mathcal{H}): a \otimes \omega \otimes b \mapsto \pi(a)U(\omega)\rho(b).
\end{equation}

Given $\mathcal{H}, \mathcal{G}\in \Corr^\G(A,B)$, we say that $\mathcal{H}$ is equivariantly weakly contained in $\mathcal{G}$, and we write $\mathcal{H}\preccurlyeq\mathcal{G}$, if 
$$\|\theta_{\mathcal{H}}(x)\|\le \|\theta_{\mathcal{G}}(x)\|, \quad \forall x\in C^\G(A,B).$$

In general, it is a hard problem to decide if a given equivariant correspondence is weakly contained in another equivariant correspondence. We will be especially interested in determining if a given correspondence $\mathcal{H}\in \Corr^\G(A,A)$ weakly contains the trivial $\G$-$A$-$A$-correspondence $L^2(A)$. The main goal of this section is to provide a more algebraic strategy to do this in the context of compact quantum groups. This strategy lies at the basis of this entire paper.

As $\G$ is compact, the $C^*$-algebra $C^{\G}(A,B)$ is easier to understand than in the general locally compact case treated in \cite{DCDR23}. Recall first that if $(A,\alpha)$ is a right/left $\G$-W$^*$-algebra, its \emph{algebraic core}
\[
\textrm{(right)}\quad \mcA = \operatorname{span}\{(\id\otimes \omega)(\alpha(a))\mid a\in A,\omega \in \mcU\},\qquad \textrm{(left)}\quad \mcA = \operatorname{span}\{(\omega\otimes \id)(\alpha(a))\mid a\in A,\omega \in \mcU\}
\] 
is a $\sigma$-weakly dense unital $*$-subalgebra of $A$, to which $\alpha$ restricts as an ordinary, right/left $\mcM$-comodule $*$-algebra structure. We will use for this comodule the standard Sweedler notation,  
\[
\textrm{(right)}\quad \alpha: \mcA \rightarrow \mcA\odot \mcM,\;  \alpha(a) = a_{(0)}\otimes a_{(1)},\qquad \textrm{(left)}\quad
\alpha: \mcA \rightarrow \mcM\odot \mcA,\; \alpha(a) = a_{(-1)}\otimes a_{(0)}.
\]
In particular, the index $(0)$ always corresponds to $\mathcal{A}$, and non-zero indices always correspond to $\mcM$.

Note that the above definition of algebraic core indeed coincides with the one provided by \eqref{EqAlgCoreOth}.

We note that $\mcA$ admits a universal C$^*$-envelope, and that moreover any $*$-representation of $\mcA$ on a pre-Hilbert space is automatically bounded.

\begin{Def}[\cite{AS21}]
Let $(A,\alpha)$ be a right $\G$-W$^*$-algebra, and $(B,\beta)$ a left $\G$-W$^*$-algebra, with respective algebraic cores $\mcA,\mcB$.

The \emph{extended double crossed product} $*$-algebra $\mcA\rtimes \msU\ltimes \mcB$ is the vector space $\mcA \odot \msU \odot \mcB$ with the $*$-algebra structure  
\begin{align*}
&(a\otimes \omega \otimes b) (a'\otimes \omega'\otimes b') = aa_{(0)}'\otimes \omega(a_{(1)}'-)\omega'(-b_{(-1)})\otimes b_{(0)}b',\\
&(a\otimes \omega\otimes b)^* = a_{(0)}^*\otimes \omega^*(a_{(1)}^*-b_{(-1)}^*)\otimes b_{(0)}^*.
\end{align*}
The \emph{double crossed product} $*$-algebra $\mcA\rtimes \G\ltimes \mcB$ is the $*$-subalgebra $\mcA\odot \mcU \odot \mcB$. 
\end{Def}
It is elementary to verify that $\mcA\rtimes \msU\ltimes \mcB$ can also be viewed as the universal unital $*$-algebra generated by copies of $\mcA,\mcB$ and $\msU$ satisfying the above commutation relations. So, we simply write expressions in this algebra as
\[
a\omega = a\otimes \omega\otimes 1,\qquad \omega a b= a_{(0)}\otimes \omega(a_{(1)}-)\otimes b,\qquad \textrm{etc.}
\]

 If $\mathcal{A}= \mathbb{C}$, we recover the $*$-algebra $\G \ltimes \mathcal{B}$. If $\mathcal{B}= \mathbb{C}$, we recover the $*$-algebra $\mathcal{A}\rtimes \G$.
 
Also, any product involving elements of $\mcA,\mcB$ and at least one element of $\mcU$ will land in $\mcA\rtimes \G\ltimes \mcB$. 
It can be easily shown that $\mcA\rtimes \G\ltimes \mcB$ allows a universal C$^*$-envelope $C_u^*(\mcA\rtimes \G\ltimes \mcB)$.

It will be convenient to introduce some terminology for a more algebraic version of the notion of $\G$-$A$-$B$-correspondence:

\begin{Def} Let $(A, \alpha)$ and $(B, \beta)$ be two right $\G$-$W^*$-algebras. An algebraic $\G$-$\mathcal{A}$-$\mathcal{B}$-correspondence consists of the data $(\mathcal{H}, \pi, \rho, U)$ such that \begin{itemize}
\item $\mathcal{H}$ is a Hilbert space.
        \item $\pi: \mathcal{A}\to B(\mathcal{H})$ is a unital $*$-representation.
        \item $\rho: \mathcal{B}\to B(\mathcal{H})$ is a unital anti $*$-representation.
        \item The images of $\pi$ and $\rho$ commute.
        \item $U \in B(\mathcal{H})\bar{\otimes}L^\infty(\G)$ is a  unitary $\G$-representation.
        \item $(\pi\odot \id)\alpha(a) = U(\pi(a)\otimes 1)U^*$ for all $a\in \mathcal{A}$.
        \item $(\rho\odot R)\beta(b) = U^*(\rho(b)\otimes 1)U$ for all $b\in \mathcal{B}.$
    \end{itemize}
\end{Def}

Note that every $\G$-$A$-$B$-correspondence gives rise to an algebraic $\G$-$\mathcal{A}$-$\mathcal{B}$-correspondence.

\begin{Prop}\label{algebraic} Given an algebraic $\G$-$\mathcal{A}$-$\mathcal{B}$-correspondence $\mathcal{H}=(\mathcal{H}, \pi, \rho, U)$, the map
$$\theta^{\mathcal{H}}: \mathcal{A}\rtimes \G \ltimes \overline{\mathcal{B}}\to B(\mathcal{H}): a \omega \overline{b}\mapsto \pi(a) U(\omega) \rho(b^*)$$
is a non-degenerate $*$-representation. The assignment $\mathcal{H}\mapsto \theta^{\mathcal{H}}$ sets up a bijective correspondence between the algebraic $\G$-$\mathcal{A}$-$\mathcal{B}$-correspondences and the non-degenerate $*$-representations of $\mathcal{A}\rtimes \G \ltimes \overline{\mathcal{B}}$ on Hilbert spaces.
\end{Prop}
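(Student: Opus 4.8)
The plan is to realise $\theta^{\mathcal{H}}$ as the $*$-homomorphism assembled from three mutually compatible $*$-representations of the generating subalgebras $\mathcal{A}$, $\mathcal{U}$ and $\overline{\mathcal{B}}$, and then to run the construction backwards. First I would record the three building blocks. On $\mathcal{A}$ I take $\pi$ itself. On $\mathcal{U}$ I take $\omega\mapsto U(\omega)=(\id\otimes\omega)U$; this is a non-degenerate $*$-representation exactly because $U$ is a unitary $\G$-representation, via the corepresentation--$*$-representation dictionary recalled in the preliminaries (multiplicativity from $(\id\otimes\Delta)U=U_{12}U_{13}$, the $*$ from $\omega^*(x)=\overline{\omega(S(x)^*)}$). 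On $\overline{\mathcal{B}}$ I take $\overline{b}\mapsto\rho(b^*)$. A one-line check in the conjugate algebra shows this is a unital $*$-representation of $\overline{\mathcal{B}}$: since $\rho$ is a unital anti-$*$-representation of $\mathcal{B}$ one has $\rho((bc)^*)=\rho(b^*)\rho(c^*)$ and $\rho((b^*)^*)=\rho(b^*)^*$, which are exactly multiplicativity and $*$-preservation after transporting the product and involution of $\overline{\mathcal{B}}$.

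Next I would verify that these three representations satisfy the defining relations of $\mathcal{A}\rtimes\G\ltimes\overline{\mathcal{B}}$, so that they assemble into a single linear map on $\mathcal{A}\odot\mathcal{U}\odot\overline{\mathcal{B}}$ sending $a\otimes\omega\otimes\overline{b}$ to $\pi(a)U(\omega)\rho(b^*)$. The commutation of $\pi(\mathcal{A})$ with $\rho(\mathcal{B})$ yields the $\mathcal{A}$--$\overline{\mathcal{B}}$ relation directly. The relation $\omega a=a_{(0)}\,\omega(a_{(1)}-)$ is the image under $\id\otimes\omega$ of the identity $U(\pi(a)\otimes1)=(\pi(a_{(0)})\otimes a_{(1)})U$, which is a rearrangement of $(\pi\odot\id)\alpha(a)=U(\pi(a)\otimes1)U^*$.

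The one genuinely delicate relation is the $\mathcal{U}$--$\overline{\mathcal{B}}$ one, and this is where I expect the main bookkeeping. Unwinding $\beta_c=(R(-)^*\bar{\otimes}\conj_B)\circ\flip\circ\beta\circ\conj_{\overline{B}}$ gives $\overline{b}_{(-1)}=R(b_{(1)})^*$ and $\overline{b}_{(0)}=\overline{b_{(0)}}$, so the product formula produces $\theta^{\mathcal{H}}(\overline{b}\,\omega)=U(\omega(-R(b_{(1)})^*))\rho((b_{(0)})^*)$. To match this with $\rho(b^*)U(\omega)$ I would invoke $(\rho\odot R)\beta(b^*)=U^*(\rho(b^*)\otimes1)U$ (using that $\beta$ is a $*$-homomorphism, so $\beta(b^*)=b_{(0)}^*\otimes b_{(1)}^*$), rewrite it as $(\rho(b^*)\otimes1)U=U(\rho(b_{(0)}^*)\otimes R(b_{(1)}^*))$, and slice with $\id\otimes\omega$; the anti-multiplicativity and $*$-preservation of $R$ make the two sides coincide. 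The analogous verification for the $*$-structure, using the formula for $(a\otimes\omega\otimes\overline{b})^*$, proceeds identically, and together these show $\theta^{\mathcal{H}}$ is a $*$-homomorphism. Non-degeneracy is then immediate: since $\pi$ and $\rho$ are unital, $[\theta^{\mathcal{H}}(\mathcal{A}\rtimes\G\ltimes\overline{\mathcal{B}})\mathcal{H}]\supseteq[U(\mathcal{U})\mathcal{H}]=\mathcal{H}$, the last equality being non-degeneracy of the $\mathcal{U}$-representation attached to the unitary $U$.

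Finally, for the stated bijection I would construct the inverse. Given a non-degenerate $*$-representation $\theta$, extend it to the multiplier algebra $M(\mathcal{A}\rtimes\G\ltimes\overline{\mathcal{B}})$, into which $\mathcal{A}$ and $\overline{\mathcal{B}}$ embed unitally (acting by multiplication on the algebra, a short computation with the product formula and the counit) and commute, while $\mathcal{U}$ sits as a non-degenerate subalgebra. Restricting recovers a unital $*$-representation $\pi$, a unital anti-$*$-representation $\rho$ via $\rho(b)=\theta(\overline{b^*})$, and a non-degenerate $*$-representation of $\mathcal{U}$, hence a unique unitary $\G$-representation $U$; the three algebra relations translate back into the three compatibility conditions, so $(\mathcal{H},\pi,\rho,U)$ is an algebraic $\G$-$\mathcal{A}$-$\mathcal{B}$-correspondence, and the two assignments are mutually inverse by inspection. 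The point needing care here is that $\mathcal{A}\rtimes\G\ltimes\overline{\mathcal{B}}$ is non-unital (because $\mathcal{U}$ is), so unitality of $\pi$ and $\rho$ must be extracted from non-degeneracy of $\theta$ through the passage to multipliers rather than assumed.
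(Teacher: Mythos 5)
Your proposal is correct and follows essentially the same route as the paper: the forward direction is the "routine calculation" the paper alludes to (your slicing of the covariance relations with $\id\otimes\omega$ is exactly the right bookkeeping, and the non-degeneracy argument is identical), and your passage to multipliers in the converse direction is precisely the paper's "standard argument" defining $\pi_\theta(a)\theta(z)\xi=\theta(az)\xi$, $\rho_\theta(b)\theta(z)\xi=\theta(\overline{b^*}z)\xi$ and recovering $U_\theta$ from the non-degenerate $\mcU$-representation. The only detail worth flagging is that boundedness of the extended operators on the dense subspace $\theta(\mcA\rtimes\G\ltimes\overline{\mcB})\Hsp$ is what makes the multiplier extension legitimate, which the paper covers by its earlier remark that every $*$-representation of $\mcA$ on a pre-Hilbert space is automatically bounded.
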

\begin{proof}
Given an algebraic $\G$-$\mathcal{A}$-$\mathcal{B}$-correspondence $\mathcal{H}$, routine calculations show that $\theta^{\mathcal{H}}$ is a $*$-representation. It is non-degenerate since $\pi, \rho$ are unital and since the $*$-representation $\mathcal{U}\to B(\mathcal{H}): \omega \mapsto U(\omega)$ is non-degenerate.

Conversely, assume that $\theta: \mathcal{A}\rtimes \G \ltimes \overline{\mathcal{B}}\to B(\mathcal{H})$ is a non-degenerate $*$-representation. By a standard argument, there is a unique $*$-representation $\pi_\theta: \mathcal{A}\to B(\mathcal{H})$ such that
$$\pi_\theta(a)\theta(z)\xi = \theta (az)\xi, \quad a \in \mathcal{A}, \quad z \in \mathcal{A}\rtimes \G \ltimes \overline{\mathcal{B}}.$$
Similarly, there is a unique anti-$*$-representation $\rho_\theta: \mathcal{B}\to B(\mathcal{H})$ such that
$$\rho_\theta(b)\theta(z)\xi =\theta(\overline{b^*}z)\xi, \quad b \in B, \quad z \in \mathcal{A}\rtimes \G \ltimes \overline{\mathcal{B}}.$$
There is a unique unitary $U_\theta \in B(\mathcal{H})\bar{\otimes} L^\infty(\G)$ such that
$$\theta(\omega) = U_\theta(\omega), \quad \omega \in \mathcal{U}.$$
It is then easily verified that $\mathcal{H}=(\mathcal{H,}\pi_\theta, \rho_\theta, U_\theta)$ is an algebraic $\G$-$\mathcal{A}$-$\mathcal{B}$-correspondence such that $\theta = \theta^{\mathcal{H}}$. 
\end{proof}

\begin{Lem}\label{LemIsoCstar}
Assume $A,B$ are right $\G$-W$^*$-algebras. Then the map 
\begin{equation}\label{StarMorph}
    \theta_u:\mcA \rtimes \G\ltimes \overline{\mcB}\rightarrow C^{\G}(A,B),\qquad a\omega \overline{b}\mapsto a \otimes \omega \otimes b^*
\end{equation}
is a well-defined $*$-homomorphism with dense range.
\end{Lem}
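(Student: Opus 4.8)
The plan is to lean on the defining property of $C^{\G}(A,B)$, namely that its C$^*$-seminorm is $\|z\| = \sup_{\mathcal{H}}\|\theta_{\mathcal{H}}(z)\|$ with the supremum over all $\G$-$A$-$B$-correspondences, so that the induced $*$-homomorphisms $\{\theta_{\mathcal{H}}\}_{\mathcal{H}}$ on $C^{\G}(A,B)$ are jointly faithful. I would first record that $\theta_u$ is a well-defined linear map into $C^{\G}(A,B)$: since $\mcU\subseteq M_*$, the algebraic cores satisfy $\mcA\subseteq A_s$ and $\mcB\subseteq B_s$, so every generator $a\otimes\omega\otimes b^*$ (with $a\in\mcA$, $\omega\in\mcU$, $b\in\mcB$) is a genuine elementary tensor in $A_s\odot M_*\odot B_s$ and hence an element of $C^{\G}(A,B)$; moreover $\overline{b}\mapsto b^*$ is linear on the conjugate algebra $\overline{\mcB}$, so $\theta_u$ is linear on $\mcA\rtimes\G\ltimes\overline{\mcB}=\mcA\odot\mcU\odot\overline{\mcB}$.

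The multiplicativity and $*$-preservation of $\theta_u$ then come for free, without invoking the (cumbersome) product formulas on either side. For every correspondence $\mathcal{H}=(\mathcal{H},\pi,\rho,U)$ a direct check on generators gives
\[
\theta_{\mathcal{H}}\bigl(\theta_u(a\omega\overline{b})\bigr)=\theta_{\mathcal{H}}(a\otimes\omega\otimes b^*)=\pi(a)U(\omega)\rho(b^*)=\theta^{\mathcal{H}}(a\omega\overline{b}),
\]
so that $\theta_{\mathcal{H}}\circ\theta_u=\theta^{\mathcal{H}}$, where $\theta^{\mathcal{H}}$ is the $*$-representation from Proposition \ref{algebraic}. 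Hence, for $x,y\in\mcA\rtimes\G\ltimes\overline{\mcB}$, using that $\theta_{\mathcal{H}}$ is a $*$-homomorphism on $C^{\G}(A,B)$ and that $\theta^{\mathcal{H}}$ is multiplicative and $*$-preserving,
\[
\theta_{\mathcal{H}}\bigl(\theta_u(xy)\bigr)=\theta^{\mathcal{H}}(x)\theta^{\mathcal{H}}(y)=\theta_{\mathcal{H}}\bigl(\theta_u(x)\theta_u(y)\bigr),\qquad \theta_{\mathcal{H}}\bigl(\theta_u(x^*)\bigr)=\theta_{\mathcal{H}}\bigl(\theta_u(x)^*\bigr),
\]
for all $\mathcal{H}$. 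Joint faithfulness of $\{\theta_{\mathcal{H}}\}$ then forces $\theta_u(xy)=\theta_u(x)\theta_u(y)$ and $\theta_u(x^*)=\theta_u(x)^*$, so $\theta_u$ is a $*$-homomorphism.

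For dense range, the key observation is that the seminorm is a cross-seminorm, $\|a\otimes\omega\otimes b\|\le\|a\|\,\|\omega\|\,\|b\|$, because $\pi,\rho$ are contractive and $U$ is unitary. By construction $A_s\odot M_*\odot B_s$ has dense image in $C^{\G}(A,B)$, so it suffices to approximate an arbitrary elementary tensor $a'\otimes\omega'\otimes b'$ in seminorm by elements of $\mcA\odot\mcU\odot\mcB$ (which is precisely the range of $\theta_u$, as $b\mapsto b^*$ maps $\mcB$ onto itself). By the cross-seminorm estimate this reduces to approximating each tensor factor: once $\mcU$ is norm-dense in $M_*$, the slice estimate $\|(\id\otimes(\omega_n-\omega'))\alpha(a)\|\le\|\omega_n-\omega'\|\,\|a\|$ shows $\mcA$ is norm-dense in $A_s$ and, symmetrically, $\mcB$ in $B_s$, and the claim follows.

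The main obstacle---really the only analytic input---is the norm-density of $\mcU$ in $M_*$, i.e.\ that the finitely supported functionals $\{\Phi(\cdot\,x)\mid x\in\mcO(\G)\}$ are norm-dense in $L^{\infty}(\G)_*$; this is standard for compact quantum groups. The rest of the argument is purely formal once the jointly faithful family $\{\theta_{\mathcal{H}}\}$ and Proposition \ref{algebraic} are in hand.
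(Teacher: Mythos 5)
Your proof is correct and follows essentially the same route as the paper: the paper composes $\theta_u$ with the faithful representation on a single universal correspondence $\mathcal{H}_u$ and invokes Proposition \ref{algebraic}, which is the same argument as your use of the jointly faithful family $\{\theta_{\mathcal{H}}\}_{\mathcal{H}}$ (the universal correspondence being exactly what realizes the supremum defining the seminorm). Your density argument via the cross-seminorm estimate and norm-density of $\mcA\subseteq A_s$, $\mcU\subseteq M_*$ is likewise the same as, if slightly more detailed than, the paper's.
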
 
\begin{proof} Recall from \cite{DCDR23} that there is a $C^*$-algebra isomorphism
$$C^\G(A,B) \cong C^\G_{\mathcal{H}_u}(A,B)= [\pi_u(a)U_u(\omega)\rho_u(b): a \in A_s, \omega \in M_*, b \in B_s]: a\otimes \omega \otimes b\mapsto \pi_u(a)U_u(\omega)\rho_u(b),$$
with $\mathcal{H}_u$ a universal $\G$-$A$-$B$ correspondence. By Proposition \ref{algebraic}, the map
$$\mathcal{A}\rtimes \G \ltimes \overline{\mathcal{B}}\to C^\G_{\mathcal{H}_u}(A,B): a \omega \overline{b}\mapsto \pi_u(a) U_u(\omega) \rho_u(b^*)$$
is a $*$-homomorphism. Therefore, the map $\theta_u$ is a $*$-homomorphism as well. Since $\mcA \subseteq A_s$ is norm-dense and $\mcU \subseteq M_*$ is norm-dense, the range of $\theta_u$ must be dense.
\end{proof}

We end this section with an observation on standard representations. If $(A,\alpha)$ is a (right) $\G$-von Neumann algebra, we can always choose a $\G$-invariant nsf weight $\phi$ on $A$, e.g.\ take an nsf weight $\phi_0$ on $A^{\G}$ and put
\[
\phi = \phi_0 \circ E_{\G},
\]
where $E_{\G}$ is the faithful normal conditional expectation 
\[
E_{\G} = (\id\otimes \Phi)\alpha: A \rightarrow A^{\G}.
\]
The unitary $\G$-representation on $L^2(A)$ that is the canonical unitary implementation of $\alpha$ \cite{Vae01} is then simply given by 
\begin{equation}\label{action of dual}
U_{\alpha}(\omega)\Lambda_{\phi}(a) = \Lambda_{\phi}((\id\otimes \omega)\alpha(a)),\qquad \omega \in \mcU,a\in \msN_{\phi}, 
\end{equation}
and the action of $\mcA\rtimes \G\ltimes \overline{\mcA}$ on $L^2(A)$ is determined by 
\[
\theta^{L^2(A)}(a\omega \overline{b}) = \pi_A(a)U_\alpha(\omega)\rho_A(b^*),\qquad a,b\in \mcA,\omega\in \mcU. 
\]

Assume now that $A, B$ are (right) $\G$-W$^*$-algebras, and assume moreover that we have a (not necessarily unital) $\G$-equivariant normal embedding of von Neumann algebras
\[
A \subseteq B. 
\]
Then we have in particular that $\mcA \subseteq \mcB$ as a (not necessarily unital) $*$-subalgebra, and we can also consider 
\[
\mcA \rtimes \G \ltimes \overline{\mcA} \subseteq \mcB\rtimes \G\ltimes \overline{\mcB}
\]
as a $*$-subalgebra.

The following result will be useful at some point (cf.\ \cite{AD93}*{Prop. 2.1} for the non-equivariant setting). 
\begin{Lem}\label{LemEstimateNorm}
Assume $A,B$ as above, and assume that there exists a $\G$-equivariant normal conditional expectation $E: B \rightarrow A$. Then 
\[
\|\theta^{L^2(A)}(z)\|\leq \|\theta^{L^2(B)}(z)\|,\qquad z\in \mcA \ltimes \G \rtimes \overline{\mcA}.
\]
\end{Lem}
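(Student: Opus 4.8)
The plan is to realize $L^2(A)$ inside $L^2(B)$ through a single isometry $V\colon L^2(A)\to L^2(B)$ that simultaneously intertwines the left module, right module and $\G$-representation structures; once this is established, the estimate is immediate because an isometric intertwiner can only decrease operator norms. So the whole argument reduces to producing $V$ and checking three compatibilities.

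First I would fix compatible weights. Choose a $\G$-invariant nsf weight $\phi$ on $A$ as in the discussion preceding the lemma, and set $\psi:=\phi\circ E$. Since $E$ is a $\G$-equivariant normal conditional expectation onto $A$, the identity $\alpha\circ E=(E\otimes\id)\circ\beta$ together with $\G$-invariance of $\phi$ gives $(\psi\otimes\id)\beta(b)=\psi(b)1$, so $\psi$ is a $\G$-invariant nsf weight on $B$ satisfying $\psi\circ E=\psi$ and $\psi|_A=\phi$. For $a\in\msN_\phi$ we have $\psi(a^*a)=\phi(E(a^*a))=\phi(a^*a)$, whence $\msN_\phi\subseteq\msN_\psi$ and the assignment $\Lambda_\phi(a)\mapsto\Lambda_\psi(a)$ extends by density to an isometry $V\colon L^2(A)\to L^2(B)$.

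Next I would verify that $V$ intertwines the three pieces of structure on the generating subalgebras. For $a\in\mcA$ the relation $V\pi_A(a)=\pi_B(a)V$ is immediate from $V\Lambda_\phi(ac)=\Lambda_\psi(ac)$. For $\omega\in\mcU$, using formula \eqref{action of dual} for both actions and the fact that equivariance of the inclusion means $\beta|_A=\alpha$, so $(\id\otimes\omega)\beta(a)=(\id\otimes\omega)\alpha(a)\in\msN_\phi\subseteq\msN_\psi$, one gets $V U_\alpha(\omega)=U_\beta(\omega)V$ on the core. The remaining intertwining $V\rho_A(a)=\rho_B(a)V$ for $a\in\mcA$ is the delicate one: writing $\rho_A(a)=J_A\pi_A(a)^*J_A$, it reduces to $VJ_A=J_BV$, after which $V\rho_A(a)=VJ_A\pi_A(a)^*J_A=J_BV\pi_A(a^*)J_A=J_B\pi_B(a^*)VJ_A=J_B\pi_B(a)^*J_BV=\rho_B(a)V$, using the $\pi$-intertwining applied to $a^*\in\mcA$. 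The identity $VJ_A=J_BV$ I would obtain from Takesaki's theorem: because $E$ is $\psi$-preserving, $\sigma^\psi_t(A)=A$ with $\sigma^\psi_t|_A=\sigma^\phi_t$ and $E\circ\sigma^\psi_t=\sigma^\psi_t\circ E$, so $V$ commutes with the modular groups, i.e.\ $V\nabla^{it}_\phi=\nabla^{it}_\psi V$, and consequently $VJ_A=J_BV$ on the GNS core (hence everywhere, $V$ being bounded).

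Finally, for any generator $z=a\omega\overline{b}$ of $\mcA\rtimes\G\ltimes\overline{\mcA}$ we have $\theta^{L^2(A)}(z)=\pi_A(a)U_\alpha(\omega)\rho_A(b^*)$ and likewise for $B$, so combining the three intertwinings yields $V\theta^{L^2(A)}(z)=\theta^{L^2(B)}(z)V$. Since $V$ is isometric, $\|\theta^{L^2(A)}(z)\xi\|=\|\theta^{L^2(B)}(z)V\xi\|\le\|\theta^{L^2(B)}(z)\|\,\|\xi\|$ for all $\xi$, which is precisely the asserted bound. I expect the main obstacle to be the modular compatibility $VJ_A=J_BV$: one must carefully invoke the correct form of Takesaki's theorem guaranteeing that the $\psi$-preserving conditional expectation commutes with the modular automorphism group, and confirm that this passes to the GNS level as compatibility of the modular conjugations. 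The left-module and $\G$-representation intertwinings, by contrast, are routine once the weights are set up as above.
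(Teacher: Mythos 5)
Your core argument --- take a $\G$-invariant nsf weight $\phi$ on $A$, pull it back to $\psi=\phi\circ E$ on $B$, and use the resulting isometric, $\pi$-, $\rho$- and $\G$-equivariant embedding $L^2(A)\hookrightarrow L^2(B)$ to get the norm inequality --- is exactly the paper's argument for its main case, and your verification of the three intertwining relations (in particular $VJ_A=J_BV$ via Takesaki) is correct. However, your proof silently assumes two hypotheses that are not in the statement: that the inclusion $A\subseteq B$ is \emph{unital} and that $E$ is \emph{faithful}. The lemma is stated for a not necessarily unital embedding (see the paragraph preceding it), and $E$ is only assumed to be a normal $\G$-equivariant conditional expectation. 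If $E$ is not faithful, $\psi=\phi\circ E$ is not a faithful weight, its GNS space is not the standard Hilbert space $L^2(B)$, and the right action $\rho_B$ and modular conjugation $J_B$ on $L^2(B)$ are no longer accessible from $\Lambda_\psi$; your construction of $V$ into $L^2(B)$ breaks down at that point. The non-unital case matters concretely: in the proof of Theorem \ref{application} the lemma is applied to the non-unital inclusion $A(\Hh)\rtimes\Hh\subseteq A\rtimes\G$.

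The paper closes these gaps by a reduction. For non-faithful $E$ one passes to the support projection $p$ of $E$, observes that $p\in B^{\G}$ and that $p$ commutes with $A$, and runs the faithful unital argument through the chain $A\cong pA\subseteq pBp\subseteq pBp+p^{\perp}Bp^{\perp}\subseteq B$, using that the statement is trivial for a direct-sum inclusion $A\subseteq A\oplus C$; the non-unital case is handled the same way with $p$ replaced by $1_A$. To complete your proof you would need to add these reduction steps (or otherwise justify why a possibly non-faithful, possibly non-unit-preserving $E$ can be replaced by a faithful unital one without changing either side of the inequality).
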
 
\begin{proof}
The lemma is trivial if $B = A\oplus C$ as a direct sum of $\G$-W$^*$-algebras.

Assume now that $A\subseteq B$ is a unital inclusion and $E$ is faithful. Then we may pick $\phi_A$ a $\G$-invariant nsf weight on $A$, and take $\phi_B := \phi_A\circ E$ as $\G$-invariant nsf weight on $B$. Then $(\sigma_t^{\phi_B})_{\mid A} = \sigma_t^{\phi_A}$, and so we obtain an isometric inclusion of $\G$-equivariant $A$-$A$-correspondences 
\[
L^2(A) \rightarrow L^2(B),\quad \Lambda_{\phi_A}(a) \mapsto \Lambda_{\phi_B}(a),\qquad a\in \msN_{\phi_A}. 
\]

If $E$ is not faithful (but the inclusion is unital), let $p \in B$ be its support projection,
\[
Bp^{\perp} = \{b\in B\mid E(b^*b) =0\}. 
\]
Since $Bp^{\perp}$ is stable under the $\G$-action, we will have $p\in B^{\G}$. Moreover, $p$ commutes with $A$, and $A\cong pA$ $\G$-equivariantly through $a \cong pa$. We can then conclude by applying the previous steps to 
\[
A\cong pA \subseteq pBp \subseteq pBp + p^{\perp}Bp^{\perp}\subseteq B.
\]
Finally, assume that $A \subseteq B$ is not necessarily unital. Then we can apply the previous steps to 
\[
A \subseteq 1_AB1_A \subseteq 1_AB1_A+1_A^{\perp}B1_A^{\perp}\subseteq B.
\]
\end{proof}

\subsection{Cotensor products}

\begin{Def}
For $(A,\alpha)$ a right $\G$-W$^*$-algebra and $(B,\beta)$ a left $\G$-W$^*$-algebra, we define the unital $*$-algebra
\[
 \mcA\overset{\G}{\square} \mcB := \{z\in \mcA\odot \mcB \mid (\alpha\odot \id)z = (\id\odot \beta)z\},
\]
and we refer to it as the cotensor product of $\mathcal{A}$ and $\mathcal{B}$. When $\G$ is clear from context, we will sometimes leave it out, and simply write $\mcA \square \mcB$.
\end{Def}

In the next result, we identify the cotensor product as a corner of the double crossed product $*$-algebra.

\begin{Prop} Let $(A, \alpha)$ a right $\G$-$W^*$-algebra and let $(B, \beta)$ be a left $\G$-$W^*$-algebra. 
The following map is an isomorphism of unital $*$-algebras:  
\begin{equation}\label{EqIsoCotens}
\kappa:\mcA \overset{\G}{\square}\mcB \rightarrow p_{\varepsilon}(\mcA\rtimes \G\ltimes \mcB)p_{\varepsilon},\qquad \sum_{j=1}^n a_j \otimes b_j \mapsto \sum_{j=1}^n a_j \delta^{1/2} b_j p_{\varepsilon}.  
\end{equation}
The inverse map is given by the restriction of the map 
\begin{equation}\label{EqMapE}
E: \mcA\rtimes \G\ltimes \mcB \rightarrow \mcA\overset{\G}{\square}\mcB,\qquad ab\omega \mapsto \omega(1) \Phi(a_{(1)} \sigma_{-i/2}R(b_{(-1)}))a_{(0)}\otimes b_{(0)}. 
\end{equation}
\end{Prop}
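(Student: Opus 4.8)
The plan is to check by direct computation that $\kappa$ is a well-defined unital $*$-homomorphism into the corner and that the restriction of $E$ is a two-sided inverse; a bijective unital $*$-homomorphism is automatically an isomorphism, and its inverse is then automatically a $*$-homomorphism. The two workhorses will be the multiplication and $*$ rules of $\mcA\rtimes\G\ltimes\mcB$ together with the identity $p_\varepsilon\omega p_\varepsilon=\omega(1)p_\varepsilon$ for $\omega\in\msU$ coming from \eqref{EqDefTrivRep}, and the fact that $\mcU$ is a two-sided ideal in $\msU$ (via \eqref{identification}), which guarantees that the elements $a\delta^{1/2}bp_\varepsilon$ really lie in $\mcA\rtimes\G\ltimes\mcB$ even though $\delta^{1/2}\in\msU\setminus\mcU$. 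I would first clear away the formal points: $\mcA\overset{\G}{\square}\mcB$ is a unital $*$-subalgebra of $\mcA\odot\mcB$ because $\alpha,\beta$ are unital $*$-homomorphisms, so the defining equation is stable under products and under $(a\otimes b)^*=a^*\otimes b^*$; and $\kappa$ is unital because $\kappa(1\otimes 1)=\delta^{1/2}p_\varepsilon=\delta^{1/2}(1)p_\varepsilon=p_\varepsilon$, which is the unit of $p_\varepsilon(\mcA\rtimes\G\ltimes\mcB)p_\varepsilon$.

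Next I would establish well-definedness of the two maps. For $\kappa$, since $\kappa(z)p_\varepsilon=\kappa(z)$ is immediate from $p_\varepsilon^2=p_\varepsilon$, it remains to prove $p_\varepsilon\kappa(z)=\kappa(z)$ for $z=\sum_j a_j\otimes b_j\in\mcA\overset{\G}{\square}\mcB$. Pushing the left $p_\varepsilon$ through $a_j$ with the product rule produces the leg $(a_j)_{(1)}$; the cotensor identity $\sum_j(a_j)_{(0)}\otimes(a_j)_{(1)}\otimes b_j=\sum_j a_j\otimes(b_j)_{(-1)}\otimes(b_j)_{(0)}$, together with its once-iterated form obtained by applying $\beta$ to the last leg and the left coaction property $(\Delta\otimes\id)\beta=(\id\otimes\beta)\beta$, lets me transport this leg onto the $\mcB$-side, where invariance of the Haar state $\Phi$ and $\delta^{1/2}(1)=1$ collapse the middle factor back to $p_\varepsilon$. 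For $E$, I must check that $\sum\Phi(a_{(1)}\sigma_{-i/2}R(b_{(-1)}))\,a_{(0)}\otimes b_{(0)}$ satisfies $(\alpha\odot\id)=(\id\odot\beta)$; this is the mirror computation, applying $\alpha$ to the first leg, using the right coaction property $(\id\otimes\Delta)\alpha=(\alpha\otimes\id)\alpha$ and invariance of $\Phi$ to shift the surviving copy of $\Delta$ into the balanced position matching $\beta$ on the second leg.

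The core is the pair of inverse relations. For $E\circ\kappa=\id$ I would first rewrite $\kappa(z)=\sum_j a_j\delta^{1/2}b_jp_\varepsilon$ as a finite sum of standard spanning elements $a\otimes\omega\otimes b$ with $\omega\in\mcU$, absorbing $\delta^{1/2}p_\varepsilon$ by convolution with $\Phi$, and then apply $E$; the scalar $\Phi(a_{(1)}\sigma_{-i/2}R(b_{(-1)}))$ and the $\delta^{1/2}$-twist telescope back to $z$ by means of the orthogonality relations \eqref{EqPW1}--\eqref{EqPW2} and the presentation \eqref{EqPresentSigma} of $\sigma_z,\tau_z$ on matrix coefficients. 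For $\kappa\circ E=\id$ on the corner I would evaluate on spanning elements $p_\varepsilon(a\otimes\omega\otimes b)p_\varepsilon$, which simplify (via $p_\varepsilon\eta p_\varepsilon=\eta(1)p_\varepsilon$) to $\Phi(a_{(1)}(b_{(-1)})_{(1)})\,\omega((b_{(-1)})_{(2)})\,a_{(0)}\otimes p_\varepsilon\otimes b_{(0)}$, and then check that $\kappa(E(\cdot))$ reproduces this expression. Finally, multiplicativity $\kappa(zw)=\kappa(z)\kappa(w)$ is verified directly on $\sum a_j\delta^{1/2}b_jp_\varepsilon a'_k\delta^{1/2}b'_kp_\varepsilon$: the decisive move is to commute the inner $p_\varepsilon$ past $a'_k$, and the cotensor relation for $w$ again turns the resulting $\mcA$-coaction legs into $\mcB$-legs so that the two occurrences of $\delta^{1/2}$ amalgamate correctly; $*$-preservation reduces to the same kind of manipulation using $p_\varepsilon^*=p_\varepsilon$ and the behaviour of $\delta^{1/2}$ under the involution of $\msU$ and under $R,S$ recorded in \eqref{EqFormAntipod}.

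I expect the main obstacle to be exactly the modular bookkeeping in these last identities: matching the $\delta^{1/2}$-twist that appears in $\kappa$ against the $\sigma_{-i/2}R$-twist in $E$. This is where non-unimodularity of $\G$ enters, and keeping the half-integer powers of the Woronowicz character and the direction of the unitary antipode exactly consistent --- via $S=\tau_{-i/2}\circ R$, the formulas \eqref{EqFormSigmaScaling} and \eqref{EqPresentSigma}, and the two orthogonality relations --- is the delicate point; once the commutation rules and the identity $p_\varepsilon\omega p_\varepsilon=\omega(1)p_\varepsilon$ are set up, the remaining manipulations are routine.
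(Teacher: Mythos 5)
Your proposal is correct in outline and shares the paper's skeleton --- exhibit $\kappa$ as a linear bijection onto the corner with the restriction of $E$ as explicit two-sided inverse, then upgrade to a $*$-isomorphism --- but it diverges from the paper precisely at the step where the real work happens: establishing compatibility with the $*$-algebra structure. You propose to verify multiplicativity and $*$-preservation of $\kappa$ by direct computation in $\mcA\rtimes \msU\ltimes \mcB$; the paper sidesteps this entirely by constructing a concrete $*$-representation $\theta_{\coarse}$ of $\mcA\rtimes\G\ltimes\mcB$ on $L^2(A)\otimes L^2(\G)\otimes L^2(B)$ and checking that $\theta_{\coarse}(p_{\varepsilon}xp_{\varepsilon}) = (\pi_A\otimes\pi_B)(E(x))$ under the identification $L^2(A)\otimes L^2(B)\cong L^2(A)\otimes \C\xi_{\Phi}\otimes L^2(B)$; since $\pi_A\odot\pi_B$ is faithful on $\mcA\odot\mcB$, this forces $E$ to be a unital $*$-homomorphism on the corner with no further computation, and hence $\kappa$ to be a $*$-isomorphism. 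Your route buys self-containedness at the price of exactly the modular bookkeeping you flag: it does go through (for instance, multiplicativity reduces, after using $p_{\varepsilon}\kappa(w)=\kappa(w)$ to write $\kappa(z)\kappa(w)=\sum_{j}a_j\delta^{1/2}b_j\kappa(w)$, to the character property of $\delta^{1/2}$, the identity $\delta^{1/2}\circ S^{-1}=\delta^{-1/2}$, and one application of the cotensor relation for $w$), but it is substantially longer. Two points to sharpen if you carry it out: first, plain invariance of $\Phi$ does not suffice for the corner computations --- you need the strong invariance identity $\Phi(x_{(2)}y)x_{(1)}=\Phi(xy_{(2)})S^{-1}(y_{(1)})$, which is what produces the antipode twist in \eqref{EqCutDown} and explains why the naive element $\Phi(a_{(1)}S^{-1}(b_{(-1)}))a_{(0)}\otimes b_{(0)}$ satisfies only a twisted intertwining relation and must be corrected by $\delta^{1/2}$ to land in $\mcA\overset{\G}{\square}\mcB$; second, the Peter--Weyl relations \eqref{EqPW1}--\eqref{EqPW2} are not needed anywhere --- the whole argument is purely Hopf-algebraic.
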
 
\begin{proof}
Clearly $\kappa$ is well-defined as a linear map
\[
\mcA \overset{\G}{\square}\mcB \rightarrow (\mcA\rtimes \msU\ltimes \mcB)p_{\varepsilon}
 = (\mcA\rtimes \G\ltimes \mcB)p_{\varepsilon}.
\]
Using the commutation relations of $\mcA\rtimes \msU\ltimes \mcB$ to rewrite
\[
\kappa(\sum_j a_j \otimes b_j) = \sum_j a_j \delta^{-1/2}(b_{j(-1)})b_{j(0)}p_{\varepsilon},  
\]
it is also clear that $\kappa$ is injective.

Now if $a\in \mcA,b\in \mcB$, an easy computation shows that 
\begin{equation}\label{EqCutDown}
p_{\varepsilon}abp_{\varepsilon} = \Phi(a_{(1)}S^{-1}(b_{(-1)}))a_{(0)}b_{(0)}p_{\varepsilon}.
\end{equation}
Write $z =  \Phi(a_{(1)}S^{-1}(b_{(-1)}))a_{(0)}\otimes b_{(0)}$. By means of the identity 
\[
\Phi(x_{(2)}y)x_{(1)} = \Phi(xy_{(2)})S^{-1}(y_{(1)}),\qquad x,y\in \mcM,
\]
it is easily checked that 
\[
(\alpha\otimes \id)z = (\id\otimes (S^{-2}\otimes \id)\beta)z. 
\]
Hence, by \eqref{EqFormSigmaScaling} and \eqref{EqFormAntipod} we get 
\[
\mcA\overset{\G}{\square}\mcB\ni \widetilde{z} := \Phi(a_{(1)}S^{-1}(b_{(-2)}))a_{(0)}\otimes \delta^{1/2}(b_{(-1)})b_{(0)}  \\ = \Phi(a_{(1)} \sigma_{-i/2}R(b_{(-1)}))a_{(0)}\otimes b_{(0)} = E(abp_{\varepsilon}), 
\]
and $\kappa(\widetilde{z}) = p_{\varepsilon}abp_{\varepsilon}$. This shows that $E$ has the correct range, and moreover provides a right inverse for $\kappa$. In fact, we see $\kappa(E(x)) = p_{\varepsilon}xp_{\varepsilon}$ for general $x\in  \mcA\rtimes \G\ltimes \mcB$.

On the other hand, the same computation as above shows that 
\[
\kappa(z) = p_{\varepsilon}\kappa(z),\qquad z\in \mcA\overset{\G}{\square}\mcB,
\]
so indeed \eqref{EqIsoCotens} is a linear isomorphism with the restriction of $E$ as its inverse. 

Now consider the $*$-representation of $\mcA\rtimes \G\ltimes \mcB$ on $L^2(A)\otimes L^2(\G)\otimes L^2(B)$ determined by 
\begin{multline*}
\theta_{\coarse}(a)= (\pi_A\otimes \pi_M)\alpha(a)_{12},\qquad \theta_{\coarse}(b) = ((\rho_M\circ R)\otimes \pi_B)\beta(b)_{23},\\
 \theta_{\coarse}(\omega) = 1\otimes \check{\pi}(\omega) \otimes 1,\qquad a\in \mcA,b\in\mcB,\omega \in \mcU. 
\end{multline*}
Then identifying $L^2(A)\otimes L^2(B) \cong L^2(A)\otimes \C\xi_{\Phi} \otimes L^2(B)$, it is easily seen that 
\[
\theta_{\coarse}(p_{\varepsilon}xp_{\varepsilon}) = (\pi_A\otimes \pi_B)(E(x)),\qquad x \in \mcA\rtimes \G\ltimes \mcB,
\]
showing that $E$ is a unital $*$-homomorphism on $p_{\varepsilon}(\mcA\rtimes \G\ltimes \mcB) p_{\varepsilon}$.
\end{proof} 

\begin{Def} Given a right $\G$-$W^*$-algebra $(A, \alpha)$ and a left $\G$-$W^*$-algebra $(B, \beta)$, 
we define the \emph{universal C$^*$-norm} on $\mcA \square \mcB$ by 
\[
\|z\|_u = \|\kappa(z)\|,\qquad z \in \mcA \square \mcB,
\]
where on the right we borrow the C$^*$-norm of $C_u^*(\mathcal{A}\rtimes \G \ltimes \mathcal{B})$. We denote the norm-completion of $\mcA \square \mcB$ for the above norm by $C^*_u(\mcA\square \mcB)$. 
\end{Def}

\begin{Rem} 
We stress that $C_u^*(\mathcal{A}\square \mathcal{B})$ is in general \emph{not} the universal C$^*$-envelope of $\mathcal{A}\square \mathcal{B}$. Such a universal $C^*$-envelope might not even exist. For example, in Proposition \ref{Prop}, we will see an example where the cotensor product is identified with the $*$-algebra $\operatorname{Fus}[\mathbb{H}]$ associated to a compact quantum group $\mathbb{H}$. The $*$-algebra $\operatorname{Fus}[SU_q(2)]$ is $*$-isomorphic to the polynomial $*$-algebra $\mathbb{C}[X]$ in the indeterminate $X$, which is a $*$-algebra that does not admit a universal $C^*$-envelope.
\end{Rem} 
Clearly, the map $\kappa$ of \eqref{EqIsoCotens} extends uniquely to a unital $*$-isomorphism \begin{equation}\label{EqIsoCotens2}
   \kappa: C^*_u(\mcA\square \mcB) \to p_{\varepsilon}C_u^*(\mathcal{A}\rtimes \G \ltimes \mathcal{B})p_{\varepsilon}. 
\end{equation}

Assume now again that $(A, \alpha)$ and $(B, \beta)$ are both two \emph{right} $\G$-$W^*$-algebras. We recall that $\overline{B}$ then becomes a left $\G$-$W^*$-algebra in a canonical way.
If $\Hsp \in \Corr^{\G}(A,B)$, we denote by $\theta_{\square}^{\Hsp}$ the corresponding $*$-representation of $\mcA\square \overline{\mcB}$ or of $C^*_u(\mcA\square \overline{\mcB})$ on $\theta^{\Hsp}(p_{\varepsilon})\Hsp$, so 
\begin{equation}\label{EqRestrRep}
\theta_{\square}^{\Hsp}(z)\xi =  \sum_{j=1}^n \pi(a_j) \delta^{1/2}(b_{j(1)}^*) \rho(b_{j(0)}^*)\xi,\qquad \xi\in \theta^{\Hsp}(p_{\varepsilon})\Hsp, \quad z = \sum_{j=1}^n a_j \otimes \overline{b_j} \in \mcA \square \overline{\mcB}.
\end{equation}

Before giving the proof of the next important result, let us recall some basic $C^*$-algebra theory in three bullet points (see e.g.\ \cite{RW98}*{Chapters 2,3} for a modern reference). If $C$ is a $C^*$-algebra and $(\mathcal{H}, \pi), (\mathcal{H}', \pi')$ $*$-representations of $C$ on Hilbert spaces, we write $\pi \preccurlyeq \pi'$, and say that $\pi$ is weakly contained in $\pi'$, if $\operatorname{Ker}(\pi')\subseteq \operatorname{Ker}(\pi)$, or equivalently $\|\pi(c)\|\le \|\pi'(c)\|$ for all $c\in C$. 

\begin{itemize}
\item If $C,D$ are $C^*$-algebras, ${}_C\mathscr{E}_D$ is an imprimitivity bimodule and $(\mathcal{H}, \pi)$ is a $*$-representation of $D$, we can form the Hilbert space 
$\mathscr{E}\otimes_D \mathcal{H}$ to be the separation-completion of $\mathscr{E}\odot_D \mathcal{H}$ with respect to the semi-inner product uniquely determined by
$$\langle \xi \otimes_D \eta, \xi'\otimes_D \eta'\rangle=\langle\eta,  \pi(\langle \xi, \xi'\rangle_D)\eta'\rangle.$$
There is a unique $*$-representation
$$\operatorname{Ind}(\pi, \mathscr{E}): C \to B(\mathscr{E}\otimes_D \mathcal{H}), \quad \operatorname{Ind}(\pi, \mathscr{E})(c)(\xi\otimes_D \eta) = c\xi \otimes_D \eta.$$
Given two $*$-representations $(\mathcal{H}, \pi), (\mathcal{H}', \pi')$ of $D$, we have 
$$\pi \preccurlyeq \pi' \iff \operatorname{Ind}(\pi, \mathscr{E}) \preccurlyeq \operatorname{Ind}(\pi', \mathscr{E}).$$
    \item If $C$ is a $C^*$-algebra and $p\in C$ is a projection, consider the ideal $I:= [CpC]$ in $C$. The imprimitivity $pCp$-$I$-bimodule $pC$ defines a strong Morita equivalence between the $C^*$-algebras $pCp$ and $I$. Given a non-degenerate $*$-representation $(\mathcal{H}, \pi)$ of $I$, there is an associated $*$-representation
    $$\tilde{\pi}: pCp\to B(\pi(p)\mathcal{H}), \quad \tilde{\pi}(pcp)\xi = \pi(pc)\xi, \quad c\in C, \quad \xi \in \pi(p)\mathcal{H}.$$
    There is a unitary intertwiner
    $$(pC\otimes_I \mathcal{H}, \operatorname{Ind}(\pi, pC)) \to (\pi(p)\mathcal{H}, \tilde{\pi}): pc \otimes_I \xi \mapsto \pi(pc)\xi$$
    of $pCp$-representations. Using the first bullet point, we conclude that 
    $$(\mathcal{H}, \pi)\preccurlyeq (\mathcal{H}', \pi')\iff (\pi(p)\mathcal{H}, \tilde{\pi})\preccurlyeq (\pi'(p)\mathcal{H}', \tilde{\pi}').$$
    \item Let $C$ be a $C^*$-algebra and $I$ a norm-closed two-sided ideal in $C$. Given a $*$-representation $(\mathcal{H}, \pi)$ of $C$, let us consider the restricted $*$-representation $\pi_I: I \to B([\pi(I)\mathcal{H}])$. If  $(\mathcal{H}', \pi')$ is another $*$-representation of $C$, then 
    $$\pi \preccurlyeq \pi'\implies \pi_I \preccurlyeq \pi'_I,$$
    and the converse holds if $[\pi(I)\mathcal{H}] = \mathcal{H}$.
\end{itemize}

\begin{Prop}
    \label{weak containment}
   Let $A$ be a $\G$-$W^*$-algebra and $\mathcal{H}\in \Corr^\G(A,A)$. The following statements are equivalent:
\begin{enumerate}
    \item $L^2(A)\preccurlyeq \mathcal{H}$, i.e.\  $\|\theta_{L^2(A)}(x)\|\le \|\theta_{\mathcal{H}}(x)\|$ for all $x\in C^\G(A,A).$
    \item $\theta_\square^{L^2(A)}\preccurlyeq \theta_\square^{\mathcal{H}}$, i.e.\ $\|\theta_\square^{L^2(A)}(x)\| \le \|\theta_\square^{\mathcal{H}}(x)\|$ for all $x\in \mathcal{A}\square \overline{\mathcal{A}}.$
\end{enumerate}
\end{Prop}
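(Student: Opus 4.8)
The plan is to transport the entire question from the correspondence $C^*$-algebra $C^{\G}(A,A)$ to the universal $C^*$-envelope $C := C_u^*(\mcA\rtimes\G\ltimes\overline{\mcA})$ of the double crossed product, and then to play the corner $p_{\varepsilon}Cp_{\varepsilon}\cong C_u^*(\mcA\square\overline{\mcA})$ off against the ideal it generates, using the three $C^*$-algebraic bullet points recalled above. First I would note that the map $\theta_u$ of Lemma \ref{LemIsoCstar} extends to a \emph{surjective} $*$-homomorphism $C\twoheadrightarrow C^{\G}(A,A)$: being a $*$-homomorphism of $C^*$-algebras with dense range, its range is closed. For any $\Hsp\in\Corr^{\G}(A,A)$ one has $\theta_{\Hsp}\circ\theta_u=\theta^{\Hsp}$ on the dense $*$-subalgebra, so $\theta^{\Hsp}$ (viewed on $C$) is exactly the pull-back of $\theta_{\Hsp}$ along this surjection. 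Since weak containment is insensitive to a surjective $*$-homomorphism — every element of the quotient lifts — statement (1) is equivalent to $\theta^{L^2(A)}\preccurlyeq\theta^{\Hsp}$ as representations of $C$.

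Next I would set $I:=[Cp_{\varepsilon}C]$ and, via the $*$-isomorphism $\kappa$ of \eqref{EqIsoCotens2}, identify $p_{\varepsilon}Cp_{\varepsilon}$ with $C_u^*(\mcA\square\overline{\mcA})$. Comparing the defining formula of $\kappa$ with \eqref{EqRestrRep} shows that, after pre-composition with $\kappa$, the corner representation $\widetilde{(\theta^{\Hsp})_I}$ of $p_{\varepsilon}Cp_{\varepsilon}$ on $\theta^{\Hsp}(p_{\varepsilon})\Hsp$ is precisely $\theta_{\square}^{\Hsp}$; note here that $p_{\varepsilon}\in I$, so the corner representation indeed lives on $\theta^{\Hsp}(p_{\varepsilon})\Hsp=\Hsp^{\G}$. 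Thus statement (2) is equivalent to $\widetilde{(\theta^{L^2(A)})_I}\preccurlyeq\widetilde{(\theta^{\Hsp})_I}$. With these identifications the implication (1)$\Rightarrow$(2) is immediate: by the forward half of the third bullet point, $\theta^{L^2(A)}\preccurlyeq\theta^{\Hsp}$ restricts to $(\theta^{L^2(A)})_I\preccurlyeq(\theta^{\Hsp})_I$ on $I$, and by the second bullet point (Morita equivalence of $I$ and $p_{\varepsilon}Cp_{\varepsilon}$ through $p_{\varepsilon}C$) this passes to the corner representations, i.e.\ to (2). For the converse (2)$\Rightarrow$(1) I would run the same chain backwards: the second bullet point turns (2) into $(\theta^{L^2(A)})_I\preccurlyeq(\theta^{\Hsp})_I$, and then the \emph{converse} half of the third bullet point upgrades this to $\theta^{L^2(A)}\preccurlyeq\theta^{\Hsp}$ on all of $C$ — but only under the hypothesis that $I$ acts non-degenerately on the \emph{first} representation, namely $[\theta^{L^2(A)}(I)\,L^2(A)]=L^2(A)$.

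Establishing this last density is the crux of the argument, and the only point at which the special structure of the standard correspondence $L^2(A)$ is used. By \eqref{action of dual} the operator $\theta^{L^2(A)}(p_{\varepsilon})=U_{\alpha}(p_{\varepsilon})$ is the orthogonal projection onto the $\G$-fixed vectors $L^2(A)^{\G}$, and by non-degeneracy of $\theta^{L^2(A)}$ on $C$ one has $[\theta^{L^2(A)}(I)L^2(A)]=[\theta^{L^2(A)}(C)\,\theta^{L^2(A)}(p_{\varepsilon})L^2(A)]$; since $\pi_A(\mcA)\subseteq\theta^{L^2(A)}(C)$, the claim reduces to showing that the fixed vectors generate $L^2(A)$ under the core, i.e.\ $[\pi_A(\mcA)\,L^2(A)^{\G}]=L^2(A)$. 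Here I would fix a $\G$-invariant nsf weight $\phi=\phi_0\circ E_{\G}$, so that $L^2(A)^{\G}=\overline{\Lambda_{\phi}(\msN_{\phi_0})}$ with $\msN_{\phi_0}=\msN_{\phi}\cap A^{\G}$, and approximate: for $a\in\mcA\cap\msN_{\phi}$ and an increasing approximate unit $(e_i)$ of $A^{\G}$ contained in $\msN_{\phi_0}$ one has $\pi_A(a)\Lambda_{\phi}(e_i)=\Lambda_{\phi}(ae_i)\to\Lambda_{\phi}(a)$, and $\Lambda_{\phi}(\mcA\cap\msN_{\phi})$ is dense in $L^2(A)$.

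The main obstacle I anticipate is precisely this weight-theoretic convergence $\Lambda_{\phi}(ae_i)\to\Lambda_{\phi}(a)$ in the non-$\sigma$-finite generality, which must be handled with care — for instance by choosing the $e_i$ analytic for $\sigma^{\phi}$ (legitimate since $\sigma^{\phi}$ preserves $A^{\G}$ and restricts to $\sigma^{\phi_0}$) and controlling right multiplication by $e_i$ as a bounded operator on $L^2(A)$ converging strongly to the identity. In the $\sigma$-finite case the difficulty evaporates: one takes $\phi$ to be a $\G$-invariant faithful normal state, and then $\xi_{\phi}=\Lambda_{\phi}(1)\in L^2(A)^{\G}$ is cyclic for $\pi_A(A)$, whence $[\pi_A(\mcA)\,L^2(A)^{\G}]\supseteq\overline{\pi_A(\mcA)\xi_{\phi}}=\overline{\Lambda_{\phi}(\mcA)}=L^2(A)$. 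By contrast, the $C^*$-algebraic bookkeeping in the first three paragraphs is entirely routine once the three bullet points are in hand.
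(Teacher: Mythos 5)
Your argument is correct and follows essentially the same route as the paper's proof: pull everything back along $\theta_u$ to $C_u^*(\mcA\rtimes\G\ltimes\overline{\mcA})$, pass to the ideal $I=[Cp_{\varepsilon}C]$ and then to the corner $p_{\varepsilon}Cp_{\varepsilon}\cong C_u^*(\mcA\square\overline{\mcA})$ via the three recalled bullet points, the only substantive input being the non-degeneracy $[\theta^{L^2(A)}(I)L^2(A)]=L^2(A)$, which the paper simply declares easy and which you correctly isolate as the crux. For the one point you flag as delicate (the non-$\sigma$-finite case) you can bypass the approximate-unit analysis entirely: $\theta^{L^2(A)}(p_{\varepsilon})$ is the Jones projection $e$ of the faithful normal conditional expectation $E_{\G}:A\to A^{\G}$, the linear span of $\pi_A(A)e\pi_A(A)$ is a $\sigma$-weakly dense two-sided ideal of the unital von Neumann algebra $\langle A,e\rangle=(J_A\pi_A(A^{\G})J_A)'$, and a $\sigma$-weakly dense ideal acts non-degenerately, so $[\pi_A(A)L^2(A)^{\G}]=L^2(A)$ and hence $[\pi_A(\mcA)L^2(A)^{\G}]=L^2(A)$ by the usual von Neumann density argument.
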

\begin{proof} Recall from \eqref{StarMorph} the map $\theta_u: \mathcal{A}\rtimes \G \ltimes \overline{\mathcal{A}}\to C^\G(A,A)$. Given $\mathcal{H}\in \Corr^\G(A,A)$, we have $\theta_{\mathcal{H}}\circ \theta_u = \theta^{\mathcal{H}}$. Since $\theta_u$ has dense image, it is clear that $(1)$ is equivalent to
$$\|\theta^{L^2(A)}(x)\|\le \|\theta^{\mathcal{H}}(x)\|, \quad x \in C_u^*(\mathcal{A}\rtimes \G \ltimes \overline{\mathcal{A}}).$$

Consider now the two-sided ideal 
$$I:= [C_u^*(\mathcal{A}\rtimes \G \ltimes \overline{\mathcal{A}})p_\varepsilon C_u^*(\mathcal{A}\rtimes \G \ltimes \overline{\mathcal{A}}) ]\subseteq C_u^*(\mathcal{A}\rtimes \G \ltimes \overline{\mathcal{A}}).$$
It is easy to see that 
$[\theta^{L^2(A)}(I) L^2(A)]= L^2(A)$, so that the last bullet point above shows that $(1)$ is equivalent with  
$$\|\theta_I^{L^2(A)}(x)\| \le \|\theta^{\mathcal{H}}_I(x)\|, \quad x \in I.$$
Combining the second bullet point above with the isomorphism \eqref{EqIsoCotens2}, the equivalence $(1) \iff (2)$ follows.
\end{proof}

\subsection{Ergodic actions}

We end this section by observing that in the ergodic setting, some of the results in this section can be upgraded:

\begin{Def}
We call a $\G$-dynamical von Neumann algebra $A$ \emph{ergodic} if $A^\G= \mathbb{C}1$.
\end{Def}

Recall that if $(A, \alpha)$ is a (right) $\G$-$W^*$-algebra, then the crossed product von Neumann algebra is given by
$$A\rtimes_\alpha \G := [\alpha(a)(1\otimes \check{\pi}(\omega)): a \in A, \omega \in \mathcal{U}]''\subseteq A \bar{\otimes} B(L^2(\G)).$$ We then have that in fact
$$A\rtimes_\alpha \G := [\alpha(a)(1\otimes \check{\pi}(\omega)): a \in A, \omega \in \mathcal{U}]^{\sigma\textrm{-weak}}.$$

The following result is probably well-known. For the convenience of the reader, we provide a proof.

\begin{Prop}\label{universalcrossedproduct}
     Let $(A, \alpha)$ be a right ergodic $\G$-$W^*$-algebra. We can identify
     $$A\rtimes_\alpha \G \cong C_u^*(\mathcal{A}\rtimes_\alpha \G)^{**}: \alpha(a)(1\otimes \check{\pi}(\omega)) \cong a\omega, \quad a \in \mathcal{A},\quad \omega \in \mathcal{U}.$$
     Consequently, if $\pi: \mathcal{A}\rtimes_\alpha \G \to B(\mathcal{H})$ is a non-degenerate $*$-representation, there exists a unique unital normal $*$-representation
    $\pi': A\rtimes_\alpha \G \to B(\mathcal{H})$
    such that the diagram 
    $$
\begin{tikzcd}
\mathcal{A}\rtimes_\alpha \G \arrow[rr, "\pi"] \arrow[d, hook] &  & B(\mathcal{H}) \\
A\rtimes_\alpha \G \arrow[rru, "\pi'"', dashed]  &  &               
\end{tikzcd}$$
commutes.
\end{Prop}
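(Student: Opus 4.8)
The plan is to present $A\rtimes_\alpha\G$ as the enveloping von Neumann algebra of $C_u^*(\mathcal{A}\rtimes_\alpha\G)$, using the universal property of the bidual together with the rigid structure that ergodicity forces on the algebraic crossed product. First I would observe that the defining covariant representation $\phi\colon \mathcal{A}\rtimes_\alpha\G\to A\rtimes_\alpha\G$, $a\omega\mapsto \alpha(a)(1\otimes\check{\pi}(\omega))$, is a $*$-homomorphism with $\sigma$-weakly dense range. By the universal property of $C_u^*(\mathcal{A}\rtimes_\alpha\G)$ it extends to a $*$-homomorphism $\Phi\colon C_u^*(\mathcal{A}\rtimes_\alpha\G)\to A\rtimes_\alpha\G$, still with $\sigma$-weakly dense range; and since $A\rtimes_\alpha\G$ is a von Neumann algebra, the universal property of the bidual produces a unique normal $*$-homomorphism $\Phi^{**}\colon C_u^*(\mathcal{A}\rtimes_\alpha\G)^{**}\to A\rtimes_\alpha\G$ extending $\Phi$. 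Everything then reduces to showing that $\Phi^{**}$ is a $*$-isomorphism, and by normality and density of the range it suffices to prove that $\Phi^{**}$ is injective.

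The heart of the argument, and the point where ergodicity is indispensable, is the structural claim that $\mathcal{A}\rtimes_\alpha\G$ is, as a $*$-algebra, an algebraic direct sum of elementary $*$-algebras of finite-rank operators. To see this I would use the spectral decomposition $\mathcal{A}=\bigoplus_{\pi\in\Irr(\G)}A_\pi$ of the ergodic action, in which each spectral subspace $A_\pi$ is finite-dimensional (finiteness of multiplicities for ergodic actions of compact quantum groups). Grouping with respect to the central block projections of $\mathcal{U}\cong\bigoplus_{\pi}B(\mathcal{H}_\pi)$ and using the explicit commutation relations of the double crossed product, one decomposes $\mathcal{A}\rtimes_\alpha\G\cong\bigoplus_i \mathcal{F}(H_i)$ into finite-rank operator algebras on Hilbert spaces $H_i$. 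Each simple block $\mathcal{F}(H_i)$ carries a unique $C^*$-norm, completes to $\mathcal{K}(H_i)$, and has bidual $B(H_i)$; since $\phi$ is injective, its restriction to every (simple) block is faithful, so $\Phi$ is injective on each $\mathcal{K}(H_i)$, hence on the $c_0$-direct sum $C_u^*(\mathcal{A}\rtimes_\alpha\G)$, and the normal extension $\Phi^{**}$ is therefore injective on $\prod_i B(H_i)\cong C_u^*(\mathcal{A}\rtimes_\alpha\G)^{**}$. This establishes the normal $*$-isomorphism $A\rtimes_\alpha\G\cong C_u^*(\mathcal{A}\rtimes_\alpha\G)^{**}$ implementing $\alpha(a)(1\otimes\check{\pi}(\omega))\cong a\omega$.

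For the final (``consequently'') assertion I would merely chase universal properties. Given a non-degenerate $*$-representation $\pi\colon \mathcal{A}\rtimes_\alpha\G\to B(\mathcal{H})$, the universal property of $C_u^*(\mathcal{A}\rtimes_\alpha\G)$ extends $\pi$ to a non-degenerate representation of $C_u^*(\mathcal{A}\rtimes_\alpha\G)$, which in turn extends uniquely to a normal representation of its bidual. Transporting along the isomorphism $\Phi^{**}$ of the previous step yields the desired normal $*$-representation $\pi'\colon A\rtimes_\alpha\G\to B(\mathcal{H})$; it is unital because $\pi$ is non-degenerate, and it is unique because any two normal extensions agree on the $\sigma$-weakly dense subalgebra $\phi(\mathcal{A}\rtimes_\alpha\G)$.

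I expect the genuine obstacle to be the structural claim of the second paragraph: showing that ergodicity forces $\mathcal{A}\rtimes_\alpha\G$ to split as a direct sum of elementary algebras, so that there is no gap between its reduced and universal $C^*$-completions. This is exactly where the finite-dimensionality of the spectral subspaces and the commutation relations of $\mathcal{A}\rtimes\G\ltimes\overline{\mathcal{B}}$ (with $\mathcal{B}=\mathbb{C}$) must be exploited; without ergodicity the analogous statement fails, mirroring the usual discrepancy between reduced and universal crossed products.
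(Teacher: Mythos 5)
Your proposal is correct and follows essentially the same route as the paper: identify $C_u^*(\mathcal{A}\rtimes_\alpha\G)$ as a $c_0$-direct sum of elementary C$^*$-algebras, extend the (faithful) regular representation normally to the bidual $\prod_i B(\mathcal{H}_i)$, and use the block/essential-ideal structure to see that the normal extension stays faithful, hence is an isomorphism onto $A\rtimes_\alpha\G$. The one structural claim you single out as the genuine obstacle --- that ergodicity forces $\mathcal{A}\rtimes_\alpha\G$ to decompose into finite-dimensional (elementary) blocks --- is exactly what the paper does not reprove but imports directly from Boca \cite{Boc95}.
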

\begin{proof} 
As $(A,\alpha)$ is ergodic, it follows from \cite{Boc95} that $\mcA \rtimes_{\alpha}\G$ is a directed union of finite-dimensional C$^*$-subalgebras, with $$C_u^*(\mathcal{A}\rtimes_\alpha \G)\cong \bigoplus_{i\in I}^{c_0} \mathcal{K}(\mathcal{H}_i)$$
    for a collection of Hilbert spaces $\{\mathcal{H}_i\}_{i\in I}$.
In particular, $\mathcal{A}\rtimes_\alpha \G$ admits a unique $C^*$-norm, and the 
non-degenerate $*$-representation
    $$C_u^*(\mathcal{A}\rtimes_\alpha \G)\to B(L^2(A)\otimes L^2(\G)): a \omega \mapsto (\pi_A\otimes \id)(\alpha(a))(1\otimes \check{\pi}(\omega))$$ is necessarily faithful. 
    
By  the universal property of the bidual, the above $*$-homomorphism therefore extends uniquely to a unital normal $*$-representation
    $$\prod_{i\in I}^{\ell^\infty} B(\mathcal{H}_i) \cong C_u^*(\mathcal{A}\rtimes_\alpha \G)^{**}\to B(L^2(A)\otimes L^2(\G)).$$
    Since $\bigoplus_{i\in I}^{c_0} \mathcal{K}(\mathcal{H}_i)$ is an essential ideal of $\prod_{i\in I}^{\ell^\infty} B(\mathcal{H}_i)$, the $*$-morphism
    $$C_u^*(\mathcal{A}\rtimes_\alpha \G)^{**}\to B(L^2(A)\otimes L^2(\G))$$
    is faithful as well. We conclude that 
    $C_u^*(\mathcal{A}\rtimes_\alpha \G)^{**}\cong A\rtimes_\alpha \G.$
\end{proof}

The following result should be compared with Proposition \ref{algebraic}.
\begin{Prop}\label{ErgodicCorrespondence}
    Let $(A, \alpha)$ and $(B, \beta)$ be ergodic $\G$-$W^*$-algebras. The assignment $\mathcal{H}\mapsto \theta^{\mathcal{H}}$ sets up a bijective correspondence between the $\G$-$A$-$B$-correspondences and the non-degenerate $*$-representations of $\mathcal{A}\rtimes \G \ltimes \overline{\mathcal{B}}$ on Hilbert spaces. 
    
    Consequently, the $W^*$-category $\Corr^\G(A,B)$ of $\G$-$A$-$B$-correspondences and the $W^*$-category $\operatorname{Rep}_*(\mathcal{A}\rtimes \G \ltimes \overline{\mathcal{B}})$ of non-degenerate $*$-representations of $\mathcal{A}\rtimes \G \ltimes \overline{\mathcal{B}}$ on Hilbert spaces are isomorphic through an isomorphism that is the identity on morphisms.
\end{Prop}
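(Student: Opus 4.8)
The plan is to reduce everything to Proposition \ref{algebraic}, which already identifies the non-degenerate $*$-representations of $\mcA\rtimes\G\ltimes\overline{\mcB}$ with \emph{algebraic} $\G$-$\mcA$-$\mcB$-correspondences $(\mcH,\pi,\rho,U)$, and to show that in the ergodic case every algebraic correspondence is the restriction of a genuine $\G$-$A$-$B$-correspondence. Since any $\mcH\in\Corr^\G(A,B)$ restricts to an algebraic $\G$-$\mcA$-$\mcB$-correspondence inducing the very same representation $\theta^{\mcH}$, it suffices to prove that this restriction is a bijection of objects, i.e.\ that each algebraic $\G$-$\mcA$-$\mcB$-correspondence extends uniquely to an object of $\Corr^\G(A,B)$. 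Uniqueness is free: $\mcA$ and $\mcB$ are $\sigma$-weakly dense in $A$ and $B$, so a normal extension of $\pi$ and a normal anti-extension of $\rho$ are unique if they exist, while $U$ is left untouched.

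For existence of the extension of $\pi$, I would restrict $\theta^{\mcH}$ to the right crossed product $\mcA\rtimes_\alpha\G=\mcA\odot\mcU\subseteq\mcA\rtimes\G\ltimes\overline{\mcB}$. This restriction is non-degenerate, since $[\theta^{\mcH}(\mcA\rtimes_\alpha\G)\mcH]=[\pi(\mcA)U(\mcU)\mcH]=\mcH$, using that $\pi$ is unital and that $\mcU\ni\omega\mapsto U(\omega)$ is non-degenerate. Applying Proposition \ref{universalcrossedproduct}, where ergodicity of $A$ enters essentially, yields a unique \emph{normal} unital $*$-representation $\Theta\colon A\rtimes_\alpha\G\to B(\mcH)$ extending $\theta^{\mcH}|_{\mcA\rtimes_\alpha\G}$. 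I then set $\widetilde{\pi}:=\Theta\circ\alpha\colon A\to B(\mcH)$, a normal unital $*$-representation. To check $\widetilde{\pi}|_{\mcA}=\pi$, I use the approximate unit $e_F=\sum_{\mu\in F}1_{B(\mathcal{H}_\mu)}\in\mcU$ indexed by finite $F\subseteq\Irr(\G)$: then $\check{\pi}(e_F)\to 1$ $\sigma$-strongly and $U(e_F)\to 1$ strongly, so by normality of $\Theta$ and boundedness of the net, for $a\in\mcA$ one gets $\widetilde{\pi}(a)=\Theta(\alpha(a))=\lim_F\Theta\bigl(\alpha(a)(1\otimes\check{\pi}(e_F))\bigr)=\lim_F\theta^{\mcH}(ae_F)=\lim_F\pi(a)U(e_F)=\pi(a)$. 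The anti-representation $\rho$ is handled symmetrically by restricting $\theta^{\mcH}$ to $\G\ltimes\overline{\mcB}$ and invoking the left-handed analogue of Proposition \ref{universalcrossedproduct} for the ergodic left $\G$-W$^*$-algebra $\overline{B}$ (equivalently, passing to conjugates to reduce to the right-handed statement), producing a normal anti-$*$-representation $\widetilde{\rho}\colon B\to B(\mcH)$ with $\widetilde{\rho}|_{\mcB}=\rho$.

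It then remains to promote $(\mcH,\widetilde{\pi},\widetilde{\rho},U)$ to an object of $\Corr^\G(A,B)$, i.e.\ to verify the compatibility relations \eqref{EqDefCondGeqCorr}. Each of these holds on the dense $*$-subalgebras $\mcA,\mcB$ by the defining relations of an algebraic correspondence, and each side is $\sigma$-weakly continuous in $a\in A$ (resp.\ $b\in B$) because $\widetilde{\pi},\widetilde{\rho},\alpha,\beta$ and $R$ are normal; hence the relations extend by $\sigma$-weak density. The commutation $\widetilde{\pi}(a)\widetilde{\rho}(b)=\widetilde{\rho}(b)\widetilde{\pi}(a)$ follows in the same way, extending first in $a$ and then in $b$. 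Since on generators $\theta^{(\mcH,\widetilde{\pi},\widetilde{\rho},U)}(a\omega\overline{b})=\widetilde{\pi}(a)U(\omega)\widetilde{\rho}(b^*)=\theta(a\omega\overline{b})$, this shows $\mcH\mapsto\theta^{\mcH}$ is a bijection onto the non-degenerate $*$-representations of $\mcA\rtimes\G\ltimes\overline{\mcB}$.

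For the categorical statement, a bounded $T\colon\mcH\to\mcH'$ intertwines $\theta^{\mcH}$ and $\theta^{\mcH'}$ if and only if it intertwines $\pi|_{\mcA},\rho|_{\mcB}$ and $U$; since the conditions $T\widetilde{\pi}(a)=\widetilde{\pi}'(a)T$ and $T\widetilde{\rho}(b)=\widetilde{\rho}'(b)T$ cut out $\sigma$-weakly closed sets of $a$ (resp.\ $b$) by normality, intertwining on $\mcA,\mcB$ is equivalent to intertwining on $A,B$. Hence $T$ is a morphism in $\Corr^\G(A,B)$ exactly when it is a morphism in $\operatorname{Rep}_*(\mcA\rtimes\G\ltimes\overline{\mcB})$, so the bijection of objects is the identity on morphism spaces and furnishes the asserted isomorphism of $W^*$-categories. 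I expect the extension step to be the main obstacle: without ergodicity there is no reason for the $*$-representation $\pi$ of the algebraic core to extend normally to $A$, and it is precisely Proposition \ref{universalcrossedproduct}, built on the structure theorem for ergodic crossed products, that removes this difficulty.
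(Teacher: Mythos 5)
Your proposal is correct and follows essentially the same route as the paper: reduce via Proposition \ref{algebraic} to extending an algebraic $\G$-$\mcA$-$\mcB$-correspondence, then obtain the normal extensions $\widetilde{\pi}$ and $\widetilde{\rho}$ by restricting $\theta^{\mcH}$ to $\mcA\rtimes_\alpha\G$ and $\G\ltimes_{\beta_c}\overline{\mcB}$ and applying Proposition \ref{universalcrossedproduct} (and its left-handed version), composing with $\alpha$ resp.\ $\beta_c$. The extra details you supply (the approximate-unit check that $\widetilde{\pi}|_{\mcA}=\pi$, the density argument for the compatibility relations, and the treatment of morphisms) are exactly the steps the paper leaves implicit.
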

\begin{proof}
    It suffices to show that every algebraic $\G$-$\mathcal{A}$-$\mathcal{B}$-correspondence $(\mathcal{H}, \pi, \rho, U)$ extends (necessarily in a unique way) to a $\G$-$A$-$B$-correspondence $(\mathcal{H}, \tilde{\pi}, \tilde{\rho}, U)$. We have a non-degenerate $*$-representation
    $$\pi\rtimes U: \mathcal{A}\rtimes_\alpha \G \to B(\mathcal{H}): a \omega \mapsto \pi(a)U(\omega).$$
    By Proposition \ref{universalcrossedproduct}, it extends uniquely to a normal unital $*$-representation
    $A\rtimes_\alpha \G \to B(\mathcal{H})$. Composing with the inclusion $\alpha: A \to \alpha(A)\subseteq A\rtimes_\alpha \G$, we find a normal extension $\tilde{\pi}: A \to B(\mathcal{H})$ of $\pi$. Similarly, consider the $*$-representation
    $$\G \ltimes_{\beta_c} \overline{\mathcal{B}}\to B(\mathcal{H}): \omega \overline{b}\mapsto U(\omega)\rho(b^*).$$
    By (the left version of) Proposition \ref{universalcrossedproduct}, this $*$-representation extends uniquely to a normal, unital $*$-representation $\theta: \G \ltimes_{\beta_c} \overline{B}\to B(\mathcal{H})$. The map
    $$\tilde{\rho}: B \to B(\mathcal{H}): b \mapsto \theta(\beta_c(\overline{b^*}))$$
    is then a unital normal anti-$*$-representation extending $\rho$.
\end{proof}

\begin{Prop}\label{TheoIsoCstar}
Assume $A,B$ are ergodic right $\G$-W$^*$-algebras. Then the surjective map  $$\theta_u: C_u^*(\mathcal{A}\rtimes \G \ltimes \overline{\mathcal{B}})\to C^\G(A,B)$$ from \eqref{StarMorph} is a $*$-isomorphism.
\end{Prop}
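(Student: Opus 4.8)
The plan is to prove that $\theta_u$ is isometric. Lemma~\ref{LemIsoCstar} already provides that $\theta_u$ is a $*$-homomorphism with dense range; since ranges of $C^*$-homomorphisms are closed, $\theta_u$ is automatically surjective, as already asserted in the statement. An isometric $*$-homomorphism is in particular injective, so isometry together with surjectivity yields the desired $*$-isomorphism. Because $\mathcal{A}\rtimes \G \ltimes \overline{\mathcal{B}}$ is norm-dense in $C_u^*(\mathcal{A}\rtimes \G \ltimes \overline{\mathcal{B}})$, it suffices to verify $\|\theta_u(x)\| = \|x\|_u$ for every $x$ in this dense $*$-subalgebra, which reduces the whole proposition to a comparison of two $C^*$-seminorms.

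First I would record the two norm descriptions. On the one hand, by definition of the universal envelope one has $\|x\|_u = \sup_\pi \|\pi(x)\|$, the supremum taken over all $*$-representations of $\mathcal{A}\rtimes \G \ltimes \overline{\mathcal{B}}$; restricting each representation to its essential subspace (on the orthogonal complement of which it vanishes) shows that this supremum is unchanged if one ranges only over \emph{non-degenerate} $*$-representations. On the other hand, the norm on $C^\G(A,B)$ is by construction $\sup_{\mathcal{H}}\|\theta_{\mathcal{H}}(\cdot)\|$, the supremum running over all $\mathcal{H}\in \Corr^\G(A,B)$. Using the identity $\theta_{\mathcal{H}}\circ \theta_u = \theta^{\mathcal{H}}$ already recorded in the proof of Proposition~\ref{weak containment}, this gives $\|\theta_u(x)\| = \sup_{\mathcal{H}\in \Corr^\G(A,B)}\|\theta^{\mathcal{H}}(x)\|$ for every $x$ in the dense subalgebra.

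The decisive input is Proposition~\ref{ErgodicCorrespondence}: in the ergodic case the assignment $\mathcal{H}\mapsto \theta^{\mathcal{H}}$ is a \emph{bijection} between $\G$-$A$-$B$-correspondences and all non-degenerate $*$-representations of $\mathcal{A}\rtimes \G \ltimes \overline{\mathcal{B}}$. Consequently the two families of representations indexing the two suprema above literally coincide, whence $\|\theta_u(x)\| = \sup_{\pi\ \mathrm{non\text{-}deg}}\|\pi(x)\| = \|x\|_u$ for all $x\in \mathcal{A}\rtimes \G \ltimes \overline{\mathcal{B}}$. Thus $\theta_u$ is isometric on a dense $*$-subalgebra, hence isometric on all of $C_u^*(\mathcal{A}\rtimes \G \ltimes \overline{\mathcal{B}})$, hence injective; combined with its surjectivity this makes $\theta_u$ a $*$-isomorphism.

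The only genuine content beyond this bookkeeping is precisely the bijection used above, i.e.\ the fact that \emph{every} non-degenerate $*$-representation of the algebraic double crossed product extends to an honest normal $\G$-$A$-$B$-correspondence. This is exactly where ergodicity is essential and is what Proposition~\ref{ErgodicCorrespondence} (through Proposition~\ref{universalcrossedproduct}) supplies; without it the correspondence-norm defining $C^\G(A,B)$ could in principle be strictly dominated by the universal norm, since one would only control representations that happen to integrate to normal representations of $A$ and $B$. In this sense the main obstacle has already been isolated into the earlier propositions, and what remains here is essentially formal.
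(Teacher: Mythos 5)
Your argument is correct and rests on exactly the same key input as the paper's proof, namely Proposition~\ref{ErgodicCorrespondence}; the paper phrases the conclusion by noting that a faithful non-degenerate representation of $C_u^*(\mcA\rtimes\G\ltimes\overline{\mcB})$ factors as $\theta_{\Hsp}\circ\theta_u$ for some correspondence $\Hsp$ (hence $\theta_u$ is injective), while you phrase it as an equality of the two suprema defining the norms, which is the same observation in isometric rather than kernel language.
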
 
\begin{proof}
Fix a faithful non-degenerate $*$-representation $\theta: C_u^*(\mathcal{A}\rtimes \G\ltimes \overline{\mathcal{B}})\to B(\mathcal{H})$. By Proposition \ref{ErgodicCorrespondence}, there is a $\G$-$A$-$B$-correspondence $(\mathcal{H}, \pi, \rho, U)$ such that $\theta_{\mathcal{H}}\circ \theta_u = \theta$. We conclude that $\theta_u$ is faithful.
\end{proof}

\section{Equivariant amenability and strong equivariant amenability}

Let $\G$ be a locally compact quantum group. Let $A,B$ be $\G$-dynamical von Neumann algebras, and assume we have a $\G$-equivariant normal unital inclusion of von Neumann algebras
\[
A \subseteq B.
\]
Following \cite{DCDR23}, we call $A\subseteq B$ \emph{equivariantly amenable} if there exists a (not necessarily normal) $\G$-equivariant conditional expectation 
\[
E: B \rightarrow A,
\]
and \emph{strongly equivariantly amenable} if ${}_AL^2(A)_A \preccurlyeq {}_AL^2(B)_A$. 

The goal of this section is to prove the following result. For technical reasons, we restrict to the case where $A,B$ are $\sigma$-finite (i.e.\ allow faithful normal states), but we believe this restriction is not essential.

\begin{Theorem}\label{TheoMainAmenab}
Let $\G$ be a compact quantum group. Assume $A \subseteq B$ is an equivariant unital normal inclusion of $\sigma$-finite $\G$-dynamical von Neumann algebras. Then the following are equivalent: 
\begin{enumerate}
\item\label{EqOneDir} $A \subseteq B$ is equivariantly amenable. 
\item\label{EqOtherDir} $A \subseteq B$ is strongly equivariantly amenable. 
\end{enumerate}
\end{Theorem}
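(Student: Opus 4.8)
The implication $(\ref{EqOtherDir})\Rightarrow(\ref{EqOneDir})$ is the routine direction, already announced in the introduction. Indeed, a weak containment ${}_AL^2(A)_A\preccurlyeq{}_AL^2(B)_A$ produces a net of approximately $A$-central, approximately $\G$-invariant unit vectors in $L^2(B)$, and any weak-$*$ cluster point of the associated vector states on $B$ is a $\G$-equivariant (necessarily unital, completely positive) conditional expectation onto $A$. I will therefore concentrate on $(\ref{EqOneDir})\Rightarrow(\ref{EqOtherDir})$, which is where the real work lies.

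The first step is to reduce the statement to a norm estimate on the cotensor product. Viewing ${}_AL^2(B)_A$ as a $\G$-$A$-$A$-correspondence by restricting the two actions along $A\subseteq B$, Proposition \ref{weak containment} tells us that strong equivariant amenability ${}_AL^2(A)_A\preccurlyeq{}_AL^2(B)_A$ is \emph{equivalent} to the weak containment of $*$-representations of $\mathcal{A}\square\overline{\mathcal{A}}$,
\[
\theta_\square^{L^2(A)}\preccurlyeq\theta_\square^{L^2(B)},\qquad\text{i.e.}\qquad \|\theta_\square^{L^2(A)}(z)\|\le\|\theta_\square^{L^2(B)}(z)\|\quad\text{for all }z\in\mathcal{A}\square\overline{\mathcal{A}}.
\]
Thus the whole problem becomes a $C^*$-algebraic weak-containment statement for two explicit representations of the single $*$-algebra $\mathcal{A}\square\overline{\mathcal{A}}$.

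If the conditional expectation $E\colon B\to A$ furnished by equivariant amenability were \emph{normal}, this would be immediate: Lemma \ref{LemEstimateNorm} gives $\|\theta^{L^2(A)}(z)\|\le\|\theta^{L^2(B)}(z)\|$ for every $z$ in the double crossed product $\mathcal{A}\rtimes\G\ltimes\overline{\mathcal{A}}$, and cutting down by $p_\varepsilon$ and transporting through the corner isomorphism \eqref{EqIsoCotens2} yields exactly the desired inequality on $\mathcal{A}\square\overline{\mathcal{A}}$. The entire difficulty is therefore that $E$ is only assumed to be a (possibly non-normal) conditional expectation, so that the composite of a faithful normal state on $A$ with $E$ need not be a normal functional on $B$, and there is no isometric bimodule embedding $L^2(A)\hookrightarrow L^2(B)$ to exploit directly.

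To overcome this I would adapt the non-equivariant argument of \cite[Corollary A.2]{BMO20}, whose engine is the theory of non-commutative $L^p$-spaces \cite{JP10}. Fix $\G$-invariant faithful normal states $\psi=\psi_0\circ E_\G$ on $A$ and $\chi=\chi_0\circ E_\G$ on $B$, available since $A,B$ are $\sigma$-finite. From the completely positive map $E$ at the $L^\infty$-level, its preadjoint at the $L^1$-level, and the modular densities of $\chi$, one manufactures by complex interpolation a net of genuine vectors in $L^2(B)$ whose vector functionals reproduce, in the limit, the $A$-bimodule coefficients encoded by $E$, the singular part of $\psi\circ E$ being washed out by the interpolation. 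These vectors then witness the required inequality $\|\theta_\square^{L^2(A)}(z)\|\le\|\theta_\square^{L^2(B)}(z)\|$. The one ingredient beyond \cite{BMO20} is equivariance: because $\psi$ and $\chi$ are $\G$-invariant and $E$ is $\G$-equivariant, the interpolated vectors can be kept inside the $\G$-invariant subspace $\theta^{L^2(B)}(p_\varepsilon)L^2(B)$, which is precisely the space on which $\theta_\square^{L^2(B)}$ acts. I expect this control of the singular part via $L^p$-interpolation, while simultaneously confining every construction to the $\G$-invariant subspace, to be the main obstacle.
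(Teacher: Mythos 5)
Your overall architecture is the right one and matches the paper: reduce via Proposition \ref{weak containment} to the weak containment $\theta_\square^{L^2(A)}\preccurlyeq\theta_\square^{L^2(B)}$ of representations of $\mathcal{A}\square\overline{\mathcal{A}}$, note that the normal case would follow from Lemma \ref{LemEstimateNorm}, and attack the non-normal case by adapting \cite[Corollary A.2]{BMO20} with the relative non-commutative $L^p$-interpolation theory of \cite{JP10}. (For $(2)\Rightarrow(1)$ the paper simply cites \cite[Theorem 4.3]{DCDR23}; your sketch via ``approximately $A$-central'' vectors is not quite right in the non-tracial setting, but this is the easy direction.)

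The gap is in your description of the core analytic step. You propose to ``manufacture by complex interpolation a net of genuine vectors in $L^2(B)$ whose vector functionals reproduce the coefficients encoded by $E$, the singular part being washed out by the interpolation.'' No such vectors are constructed, and it is not clear how interpolation could produce them: the non-normality of $E$ is precisely the obstruction to finding vectors in $L^2(B)$ implementing it, and ``washing out the singular part'' is essentially the conclusion to be proved rather than a mechanism. What the argument actually does (Theorems \ref{TheoInterpol} and \ref{TheoBMO}) is the following: (i) form the conditioned commuting square with $A_0=A^{\G}$, $B_0=B^{\G}$ and the canonical \emph{normal} expectations $P,Q$, note that $P_2^*P_2=\theta^{L^2(A)}(p_\varepsilon)$, and use \cite[Corollary 4.10]{JP10} to identify the interpolated norm $\|(x_1,\dots,x_n)\|_{1/2}$ with $\|\tfrac1n\sum_iQ_2\pi_B(x_i)\rho_B(x_i^*)Q_2^*\|^{1/2}$; (ii) the Schwarz inequality shows that the (possibly singular) $E$ is contractive for the row and column norms relative to $P$ and $Q$, hence for the interpolated norm, which yields the desired estimate for \emph{diagonal} elements $\sum_ia_i\otimes\overline{a_i}$ only; (iii) since one cannot polarize a norm inequality, the passage to general elements of $\mathcal{A}\square\overline{\mathcal{A}}$ goes through a Hahn--Banach separation argument (\cite[Lemma A.5]{BMO20}) producing a state $\psi$ on $B(L^2(B_0))$ whose associated sesquilinear form dominates the self-polar form $s_\omega$ on the diagonal, whence Woronowicz's maximality theorem for self-polar forms \cite{Wor74} forces equality and gives the coefficient bound $|\langle\xi_\omega,\theta_\square^{L^2(A)}(z)\xi_\omega\rangle|\le\|\theta_\square^{L^2(B)}(z)\|$; (iv) cyclicity of $\xi_{\phi_0}\in P_{A_0}^\natural$ upgrades this to the norm inequality. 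Your proposal contains none of steps (ii)--(iv); in particular it does not address how to get from positive-type (diagonal) elements to arbitrary elements of the cotensor product, which is exactly where the self-polar form machinery is indispensable.
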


The implication \eqref{EqOtherDir} $\Rightarrow$ \eqref{EqOneDir} was already shown in \cite{DCDR23}*{Theorem 4.3} (for general locally compact quantum groups). To prove $\Leftarrow$, we need to set up some machinery.

\subsection{Norm estimates for  \texorpdfstring{$A \odot \overline{A}$}{algebraic tensor products} in the relative setting}

Let $A$ be a von Neumann algebra. Recall that $A$ admits associated non-commutative $L^p$-spaces 
\[
L^p(A),\qquad 1\leq p \leq \infty,
\]
which we concretely realize through the Haagerup construction \cites{Haa79,Ter81}. We follow specifically the exposition of \cite{JP10}*{Chapter 1}, and we assume for now in particular that $A$ admits a faithful normal state $\phi$, to be fixed throughout. Then with $\tau$ the associated semi-finite normal trace on the continuous core 
\[
c_{\phi}(A) = A \rtimes_{\sigma_t^{\phi}}\R,
\]
we can concretely identify $L^p(A)$ as a subspace of the $*$-algebra of $\tau$-measurable operators affiliated to $c_{\phi}(A)$. The norm on $L^p(A)$ is determined through 
\[
\|a\|_p := \trace(|a|^p)^{1/p},
\]
where $\trace: L^1(A) \rightarrow \C$ is the linear functional uniquely determined by
\[
\trace(h) = \omega(1) \textrm{ if }h\in L^1_+(A), \omega \in M_*^+ \textrm{ and }\omega\left(\int_{\R}\hat{\sigma}_t^\phi(x)\rd t\right) = \tau(xh),\qquad x\in c_{\phi}(A)^+,
\]
with $\hat{\sigma}_t^{\phi}$ the dual $\R$-action on $c_{\phi}(A)$. Each $L^p(A)$ is an $A$-bimodule through multiplication.

In particular, $L^2(A)$ will be a Hilbert space. It can be identified with the standard Hilbert space for $A$ as follows: the standard representations $\pi_A$ and $\rho_A$ are through left and right multiplication, while the modular conjugation and standard cone $P^{\natural}$ are defined through 
\[
J_Aa = a^*,\qquad P^{\natural} = \{x^*x\mid x \in L^4(A)\},\qquad a\in L^2(A).
\]
We recall that $\langle \xi,\eta\rangle \geq0$ for all $\xi,\eta\in P^{\natural}$, and that any element of the real span of $P^{\natural}$ can be written in a unique way as a difference of orthogonal elements in $P^{\natural}$. 

 
Let now $A_0 \subseteq A$ be a unital, normal von Neumann subalgebra together with a faithful normal conditional expectation 
\[
P: A \rightarrow A_0 \subseteq A.
\]
We may and will assume that $\phi = \phi \circ P$. We write $\phi_0 = \phi_{\mid A_0}$.

We can identify $L^p(A_0) \subseteq L^p(A)$. If we want to emphasize this inclusion map, we write it as 
\[
I_p: L^p(A_0) \rightarrow L^p(A). 
\]
Similarly, the map $P$ induces a corresponding contractive map between the $L^p$-spaces, which we write  
\[
P_p: L^p(A) \rightarrow L^p(A_0),\qquad a \mapsto P(a).
\]
Here we extend $P$ naturally to a conditional expectation 
\[
P: c_{\phi}(A) \rightarrow c_{\phi}(A_0)
\]
(well-defined as $\phi \circ P = \phi$), which we then in turn may extend to the $*$-algebra of $\tau$-measurable operators.  

With $\xi_{\phi}\in L^2(A_0)$ the GNS-vector of $\phi$, we simply have that 
\[
I_2(\pi_{A_0}(a_0)\xi_{\phi_0})= \pi_A(a_0)\xi_{\phi},\qquad a_0\in A_0,
\]
while $P_2 = I_2^*$, using that $\trace \circ P = \trace$ and 
\[
P(axb) = aP(x)b,\qquad a\in L^p(A_0),b\in L^q(A_0),x\in L^r(A),\quad \frac{1}{p}+\frac{1}{r}+ \frac{1}{q} \leq 1
\]
(see \cite{JP10}*{Lemma 1.4}). 

Consider now on $A^n$ the norms 
\begin{equation}\label{EqNormLeftRightRel}
\|(a_1,\ldots,a_n)\|_0 = \|\frac{1}{n}\sum_i P(a_i^*a_i)\|^{1/2},\qquad 
\|(a_1,\ldots,a_n)\|_1 = \|\frac{1}{n}\sum_i P(a_ia_i^*)\|^{1/2}.
\end{equation}
We denote the respective completions as $L_{\infty}^c(A^n,\widetilde{P})$ and $L_{\infty}^r(A^n,\widetilde{P})$, where we view $A^n$ as a von Neumann algebra equipped with the normal faithful conditional expectation
\[
\widetilde{P}: A^n \rightarrow A_0,\quad (a_1,\ldots,a_n) \mapsto \frac{1}{n}\sum_{k=1}^n P(a_k)
\]
(including $A_0$ into $A^n$ diagonally).

The above Banach spaces can easily be shown to be compatible with respect to the joint subspace $A^n$. We can hence form the complex interpolation space 
\[
[L_{\infty}^c(A^n,\widetilde{P}),L_{\infty}^r(A^n,\widetilde{P})]_{\theta},\qquad 0<\theta<1.
\]
The following result follows from \cite{JP10}*{Corollary 4.10}. The latter (for the case of $\theta =1/2$) extends \cite{Haa93}*{Theorem 2.1} where $A = A_0$, which was in turn an extension of the result of \cite{Pis95}, who proved the statement (for general $\theta$) for $A =A_0$ a semifinite von Neumann algebra.

\begin{Theorem}\label{TheoInterpol}
Let $A,A_0$ and $P: A \rightarrow A_0$ be as above. Then for $x=(x_1, \dots, x_n) \in A^n$, we have
\begin{equation}\label{EqDiffFormRel}
\|(x_1,\ldots,x_n)\|_{1/2} = \|\frac{1}{n}\sum_i P_2\pi_A(x_i)J_A\pi_A(x_i)J_AI_2\|^{1/2}.
\end{equation}
\end{Theorem}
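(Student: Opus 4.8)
The plan is to deduce the statement directly from \cite[Corollary 4.10]{JP10}, applied not to $A$ itself but to the von Neumann algebra $N := A^n$ (the $n$-fold direct sum), equipped with the faithful normal conditional expectation $\widetilde{P}\colon N \to A_0$ and the invariant state $\widetilde{\phi} := \phi_0\circ \widetilde{P} = \frac1n\sum_i \phi\circ \mathrm{pr}_i$. First I would record that, with respect to $\widetilde{P}$, the column and row expressions $\|\cdot\|_0$ and $\|\cdot\|_1$ of \eqref{EqNormLeftRightRel} are exactly the defining norms of $L_\infty^c(N,\widetilde{P})$ and $L_\infty^r(N,\widetilde{P})$, since for $x=(x_1,\ldots,x_n)\in N$ one has $\widetilde{P}(x^*x)=\frac1n\sum_i P(x_i^*x_i)$ and $\widetilde{P}(xx^*)=\frac1n\sum_i P(x_ix_i^*)$. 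Hence $\|x\|_{1/2}$ is precisely the $\theta=1/2$ interpolation norm to which \cite[Corollary 4.10]{JP10} applies, and that corollary expresses it as $\|\widetilde{P}_2\,\pi_N(x)J_N\pi_N(x)J_N\, I_2^N\|^{1/2}$, where $\pi_N,J_N$ are the standard data of $(N,\widetilde{\phi})$ and $I_2^N\colon L^2(A_0)\to L^2(N)$, $\widetilde{P}_2=(I_2^N)^*$ are the canonical maps attached to $\widetilde{P}$.

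The remaining task is the bookkeeping identifying this operator on $L^2(A_0)$ with $\frac1n\sum_i P_2\pi_A(x_i)J_A\pi_A(x_i)J_A I_2$. Since $N=A^n$ is a direct sum and $\widetilde{\phi}=\frac1n\sum_i \phi\circ\mathrm{pr}_i$, the GNS inner product is $\langle a,b\rangle_{\widetilde{\phi}}=\frac1n\sum_i\langle a_i,b_i\rangle_{L^2(A)}$, so $L^2(N)\cong\bigoplus_{i=1}^n L^2(A)$ with the $i$-th summand carrying the inner product rescaled by $\frac1n$; the modular group of $\widetilde{\phi}$ is diagonal, whence $\pi_N(x)=\diag(\pi_A(x_i))_i$ and $J_N=\diag(J_A)_i$. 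Under this identification the diagonal inclusion $A_0\hookrightarrow N$ induces $I_2^N\colon\zeta\mapsto(\zeta,\ldots,\zeta)$, and composing the diagonal operator with $I_2^N$ and $\widetilde{P}_2$ yields
\[
\widetilde{P}_2\,\pi_N(x)J_N\pi_N(x)J_N\, I_2^N \;=\; \tfrac1n\sum_{i=1}^n P_2\,\pi_A(x_i)J_A\pi_A(x_i)J_A\, I_2,
\]
which is exactly the operator appearing in \eqref{EqDiffFormRel}.

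The main obstacle is not conceptual but the careful tracking of the normalization: one must verify that the factor $\frac1n$ in $\widetilde{P}$ passes correctly through the GNS construction for $\widetilde{\phi}$, so that $I_2^N$ is genuinely isometric and $\widetilde{P}_2=(I_2^N)^*$ carries the averaging factor, rather than it appearing twice or vanishing. I would pin this down by the single computation $\langle I_2^N\zeta,(\eta_i)_i\rangle_{L^2(N)}=\frac1n\sum_i\langle\zeta,\eta_i\rangle_{L^2(A)}=\frac1n\sum_i\langle\zeta,P_2\eta_i\rangle_{L^2(A_0)}$, using $P_2=I_2^*$ on each factor; this simultaneously gives $\|I_2^N\zeta\|^2=\|\zeta\|^2$ and the formula $\widetilde{P}_2(\eta_i)_i=\frac1n\sum_i P_2\eta_i$. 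Once this is settled, the operator is manifestly positive, being the $L^2$-compression of the completely positive map $\zeta\mapsto\frac1n\sum_i P(x_i\zeta x_i^*)$, so both sides of \eqref{EqDiffFormRel} are bona fide nonnegative quantities and the asserted equality is precisely the instance of \cite[Corollary 4.10]{JP10} for $(N,\widetilde{P},A_0)$. A self-contained argument would instead require reproving the column/row interpolation identity of \cite{Pis95,Haa93}, which is exactly what \cite[Corollary 4.10]{JP10} packages in the relative setting.
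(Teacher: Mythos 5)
Your overall strategy is the same as the paper's: both apply \cite[Corollary 4.10]{JP10} to the von Neumann algebra $N=A^n$ equipped with the conditional expectation $\widetilde{P}$, and your direct-sum bookkeeping (the identification $L^2(A^n)\cong\bigoplus_i L^2(A)$ with the $\tfrac1n$-rescaled inner product, $I_2^N\zeta=(\zeta,\ldots,\zeta)$ isometric, $\widetilde{P}_2(\eta_i)_i=\tfrac1n\sum_i P_2\eta_i$) is correct and does yield the identification of operators you state. The gap lies in what you attribute to the cited corollary. That result does not output the operator norm $\|\widetilde{P}_2\,\pi_N(x)J_N\pi_N(x)J_N I_2^N\|^{1/2}$ directly; it identifies the interpolation norm as the amalgamated quantity
\[
\|(x_1,\ldots,x_n)\|_{1/2}=\sup\{\|axb\|_{L^2(A^n)}\mid \|a\|_{L^4(A_0)}\le 1,\ \|b\|_{L^4(A_0)}\le 1\},
\]
and converting this supremum into an operator norm on $L^2(A_0)$ is precisely the content of the proof, which your proposal omits by folding it into the citation.

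Concretely, the missing steps are: (i) for fixed $b$, compute $\sup_{\|a\|_4\le1}\|axb\|_2^2=\sup_{\|a\|_4\le 1}\trace\bigl(a\widetilde{P}_2(xbb^*x^*)a^*\bigr)=\|\widetilde{P}_2(xbb^*x^*)\|_{L^2(A_0)}$, using the bimodule property of $P$ on the Haagerup $L^p$-scale and the positivity of $\widetilde{P}_2(xbb^*x^*)$ in $L^2(A_0)$; (ii) observe that $\{bb^*:\|b\|_{L^4(A_0)}\le1\}$ is exactly the unit ball of the cone $P^{\natural}_{A_0}$, so the remaining supremum over $b$ becomes $\sup\{\|T\xi\|:\xi\in P^{\natural}_{A_0},\ \|\xi\|\le1\}$ with $T=\frac1n\sum_iP_2\pi_A(x_i)J_A\pi_A(x_i)J_AI_2$; (iii) since $T$ preserves the self-dual cone $P^{\natural}_{A_0}$, this supremum over the cone equals the full operator norm $\|T\|$. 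Step (iii) is where positivity is genuinely needed --- for a general bounded operator the supremum of $\|T\xi\|$ over the unit ball of the cone can be strictly smaller than $\|T\|$ --- whereas your remark that ``both sides are nonnegative quantities'' does not engage with this. As written, quoting the corollary already in operator-norm form assumes the conclusion; supplying (i)--(iii) would close the gap and would essentially reproduce the paper's argument.
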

\begin{proof}
This is simply writing out the conclusion of \cite{JP10}*{Corollary 4.10} more explicitly. Namely, we find by that result that
\[
\|(x_1,\ldots,x_n)\|_{1/2} = \sup\{\|axb\|_{L^2(A^n)} \mid \|a\|_{L^4(A_0)},\|b\|_{L^4(A_0)}\leq 1\}. 
\]
But for $b$ fixed, we have 
\begin{eqnarray*}
\sup_{\|a\|_4\leq 1}\|axb\|_{L^2(A^n)}^2 &=& \sup_{\|a\|_4\leq 1} \trace(axbb^*xa^*) \\
&=& \sup_{\|a\|_4\leq 1} \trace(a\widetilde{P}_2(xbb^*x)a^*)\\
&=& \|\widetilde{P}_2(xbb^*x)\|_{L^2(A_0)},
\end{eqnarray*}
using in the last step that $\widetilde{P}_2(xbb^*x)$ is a positive element in $L^2(A_0)$ (see \cite{JP10}*{Lemma 1.4}).

It follows that 
\[
\|(x_1,\ldots,x_n)\|_{1/2}^2 = \sup\{\frac{1}{n}\|\sum_i P_2\pi_A(x_i)J_A\pi_A(x_i)J_AI_2\xi\|\mid \xi \in P_{A_0}^{\natural}, \|\xi\|_2\leq 1\}.
\]
But since $\frac{1}{n}\sum_i P_2\pi_A(x_i)J_A\pi_A(x_i)J_AI_2$ preserves $P_{A_0}^{\natural}$, one easily concludes that \eqref{EqDiffFormRel} must hold.
\end{proof}

\subsection{Conditioned commuting squares} \label{SecCondComSqu}

Assume that we have a square of normal unital inclusions of $\sigma$-finite von Neumann algebras
\begin{equation}\label{EqInclSquares}
\begin{gathered}
\xymatrix{
A \ar[r] & B\\
A_0 \ar[r]\ar[u] & B_0 \ar[u]
}
\end{gathered}
\end{equation}

Assume that we have faithful normal conditional expectations 
\[
P: A\rightarrow A_0,\qquad Q: B\rightarrow B_0,\qquad Q_{\mid A} = P. 
\]
In this case, we say that \eqref{EqInclSquares} is a \emph{conditioned commuting square}.

Assume further that we have a (not necessarily normal) conditional expectation 
\[
E: B \rightarrow A
\]
satisfying the compatibility
\[
E(B_0)  = A_0,\qquad E \circ Q = P\circ E.
\]
In this case, we say that $E$ is a conditional expectation from $(B,B_0,Q)$ to $(A,A_0,P)$. 

The following generalizes partially the harder part of \cite{BMO20}*{Corollorary A.2} (we will not need the direction back for our application). 

\begin{Theorem}\label{TheoBMO}
Assume we are given a conditioned commuting square as in  \eqref{EqInclSquares}. Consider the following statements:
\begin{enumerate}
\item\label{EqCond1} There exists a conditional expectation $E: (B,B_0,Q) \rightarrow (A,A_0,P)$. \item\label{EqCond2} For all algebraic tensors $z = \sum_{i=1}^n  a_i\otimes \overline{a_i} \in A \odot \overline{A}$, we have 
\[
\|\sum_{i=1}^n P_2\pi_A(a_i)\rho_A(a_i^*)P_2^*\|  = \|\sum_{i=1}^n Q_2\pi_B(a_i)\rho_B(a_i^*)Q_2^*\|.
\] 
\item\label{EqCond3} For all algebraic tensors $z = \sum_{i=1}^n a_i'\otimes \overline{a_i} \in A \odot \overline{A}$ and all normal states $\omega \in A_*$ satisfying $\omega = \omega \circ P$, we have 
\[
| \langle\xi_{\omega},\sum_{i=1}^n \pi_A(a_i')\rho_A(a_i^*)\xi_{\omega}\rangle| \leq \|\sum_{i=1}^n Q_2\pi_B(a_i')\rho_B(a_i^*)Q_2^*\|.
\] 
\end{enumerate}
Then $\eqref{EqCond1}  \Rightarrow  \eqref{EqCond2} \Rightarrow \eqref{EqCond3}$. 
\end{Theorem}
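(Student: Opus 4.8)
The plan is to read both quantities in \eqref{EqCond2} through Theorem \ref{TheoInterpol}. Applying that result to $(A,A_0,P)$ and to $(B,B_0,Q)$, and using $J_A\pi_A(a_i)J_A=\rho_A(a_i^*)$ together with $I_2=P_2^*$, the two norms in \eqref{EqCond2} are, up to the common factor $n$, exactly the squares of the interpolation norms $\|(a_1,\dots,a_n)\|_{1/2}$ computed relative to $P$ and relative to $Q$. Thus \eqref{EqCond2} is the assertion that these two interpolation norms coincide on tuples drawn from $A^n\subseteq B^n$, and the whole implication $\eqref{EqCond1}\Rightarrow\eqref{EqCond2}$ becomes a statement about the complex interpolation couple \eqref{EqNormLeftRightRel}.

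For $\eqref{EqCond1}\Rightarrow\eqref{EqCond2}$ I would prove the two inequalities separately. The inequality ``$\ge$'' is free and uses no expectation: since $Q_{\mid A}=P$ and the operator norm of $A_0$ is the same inside $A$ and inside $B$, the column and row norms \eqref{EqNormLeftRightRel} of a tuple in $A^n$ agree whether formed with $P$ or with $Q$, so $A^n$ sits isometrically in both endpoint spaces $L^{c}_{\infty}(B^n,\widetilde Q)$ and $L^{r}_{\infty}(B^n,\widetilde Q)$; restriction to a subspace never decreases a complex interpolation norm, whence $\|(a_i)\|_{1/2,A}\ge\|(a_i)\|_{1/2,B}$. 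The inequality ``$\le$'' is where $E$ enters. The fiberwise map $E^{\oplus n}\colon B^n\to A^n$ is a contraction for the column norm, because by Kadison--Schwarz $E(b_i)^*E(b_i)\le E(b_i^*b_i)$, then $\tfrac1n\sum_iP(E(b_i)^*E(b_i))\le \tfrac1n E\!\big(\sum_i Q(b_i^*b_i)\big)$ using $P\circ E=E\circ Q$ and $E(B_0)=A_0$, and finally $\|E(y)\|\le\|y\|$ for the positive element $y=\tfrac1n\sum_iQ(b_i^*b_i)\in B_0$; the same computation with $b_ib_i^*$ handles the row norm. By interpolation $E^{\oplus n}$ is then a contraction between the $\theta=\tfrac12$ spaces, and it is the identity on $A^n$, giving $\|(a_i)\|_{1/2,A}\le\|(a_i)\|_{1/2,B}$. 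Combining the two yields \eqref{EqCond2}.

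For $\eqref{EqCond2}\Rightarrow\eqref{EqCond3}$ I would first record that $\omega=\omega\circ P$ forces the GNS vector to lie in the smaller space: $\xi_\omega=I_2\xi_{\omega_0}\in L^2(A_0)$ is the unique positive representative (using $P_2\pi_A(a)I_2=\pi_{A_0}(P(a))$), and it is a unit vector. Hence the left-hand side of \eqref{EqCond3} equals $\langle\xi_{\omega_0},\sum_iP_2\pi_A(a_i')\rho_A(a_i^*)P_2^*\,\xi_{\omega_0}\rangle$. Next I would exploit that $(\mathbf x,\mathbf y)\mapsto\langle\xi_\omega,\sum_i\pi_A(x_i)\rho_A(y_i^*)\xi_\omega\rangle$ is a positive Hermitian sesquilinear form $g$ on $A^n$ (its diagonal is $\sum_i\|\xi_\omega^{1/2}x_i^*\xi_\omega^{1/2}\|_2^2\ge0$), whose diagonal values obey $g(\mathbf c,\mathbf c)=\langle\xi_\omega,\sum_iP_2\pi_A(c_i)\rho_A(c_i^*)P_2^*\,\xi_\omega\rangle\le\|\sum_iP_2\pi_A(c_i)\rho_A(c_i^*)P_2^*\|$, which by \eqref{EqCond2} equals the corresponding diagonal $B$-norm. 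Since the off-diagonal operator is an exact polarization combination of the diagonal (positive) operators, with the same scalar coefficients on the $A$- and $B$-sides, the task reduces to dominating the single off-diagonal coefficient $g(\mathbf a',\mathbf a)$ by the single off-diagonal norm $\|\sum_iQ_2\pi_B(a_i')\rho_B(a_i^*)Q_2^*\|$.

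The main obstacle is precisely this last step, and it is genuinely delicate. The diagonal comparison coming from \eqref{EqCond2} only controls $g(\mathbf c,\mathbf c)$ by \emph{diagonal} $B$-norms, and a naive Cauchy--Schwarz for $g$ yields $|g(\mathbf a',\mathbf a)|\le\|T_B^{\mathrm{diag}}(a')\|^{1/2}\,\|T_B^{\mathrm{diag}}(a)\|^{1/2}$, the geometric mean of diagonal norms, which in general \emph{exceeds} the true off-diagonal norm that appears on the right of \eqref{EqCond3}; so decoupling the two families is fatal. Closing the gap requires using the full operator-space (completely isometric) content behind \eqref{EqCond2}, rather than the scalar norm equality alone. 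Concretely I would either amplify the conditioned commuting square to $M_k(A)\subseteq M_k(B)$ and realize the off-diagonal coefficient as (a corner of) a genuine diagonal one there, or transfer the vector functional $\omega_{\xi_\omega}$ along the corner/Morita correspondence already used in Proposition \ref{weak containment}, so that $g(\mathbf a',\mathbf a)$ is exhibited as an honest matrix coefficient of $\sum_iQ_2\pi_B(a_i')\rho_B(a_i^*)Q_2^*$ and the bound follows from $|\langle\eta,T\zeta\rangle|\le\|T\|$. Verifying that the comparison \eqref{EqCond2}, as furnished by the Junge--Parcet interpolation \cite{JP10}, really is matricial (so that such an amplification or transfer is legitimate) is the crux of the argument.
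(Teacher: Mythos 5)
Your treatment of $\eqref{EqCond1}\Rightarrow\eqref{EqCond2}$ is correct and is essentially the paper's argument: both the inclusion $A^n\to B^n$ and the amplified expectation $E^{\oplus n}$ are contractive for the two endpoint norms of \eqref{EqNormLeftRightRel} (the latter by the Schwarz inequality together with $E\circ Q=P\circ E$ and $E(B_0)=A_0$), hence for the interpolated norm, and Theorem \ref{TheoInterpol} translates the resulting equality of $\|\cdot\|_{1/2}$-norms into \eqref{EqCond2}.

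The implication $\eqref{EqCond2}\Rightarrow\eqref{EqCond3}$, however, is not proved. You correctly isolate the crux --- the diagonal estimate furnished by \eqref{EqCond2} must be upgraded to a bound on the off-diagonal coefficient $\langle\xi_\omega,\sum_i\pi_A(a_i')\rho_A(a_i^*)\xi_\omega\rangle$ by the off-diagonal norm, and polarization or Cauchy--Schwarz only yield the geometric mean of diagonal norms --- but the two routes you sketch for closing the gap (matrix amplification of the commuting square, or a Morita-type transfer of the vector state) are left entirely unverified, and neither is the mechanism that actually makes the argument work. The paper, following \cite[Corollary A.2]{BMO20}, resolves this with the theory of self-polar forms: the sesquilinear form $s_\omega(a,b)=\langle\xi_\omega,\pi_A(a)\rho_A(b^*)\xi_\omega\rangle$ attached to $\omega=\omega_0\circ P$ is \emph{self-polar}, and a Hahn--Banach separation argument (\cite[Lemma A.5]{BMO20}) applied to the convex cone of operators $Q_2\sum_i\pi_B(a_i)\rho_B(a_i^*)Q_2^*$ produces a state $\psi$ on $B(L^2(B_0))$ whose associated hermitian form $s(b',b)=\psi(Q_2\pi_B(b')\rho_B(b^*)Q_2^*)$ dominates $s_\omega$ on the diagonal with the matching normalization $s(1,1)=s_\omega(1,1)=1$. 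Woronowicz's maximality theorem for self-polar forms \cite{Wor74} then forces $s_\omega=s$ on $A\times A$, so the off-diagonal coefficient equals $\psi\bigl(Q_2\sum_i\pi_B(a_i')\rho_B(a_i^*)Q_2^*\bigr)$ and \eqref{EqCond3} follows from $|\psi(T)|\le\|T\|$. Without this (or an equivalent) input, your argument stops exactly at the step you yourself flag as unresolved, so the proposal as it stands does not establish $\eqref{EqCond2}\Rightarrow\eqref{EqCond3}$.
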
 
\begin{proof}
Assume \eqref{EqCond1}. Since $E$ intertwines $Q$ and $P$, we see by the Schwarz inequality that $E$ is contractive with respect to both norms in \eqref{EqNormLeftRightRel}. Hence $E$ must also be contractive with respect to the interpolated norm $\|-\|_{1/2}$. By Theorem \ref{TheoInterpol}, this entails that $\leq$ holds in \eqref{EqCond2}. The reverse inequality holds by applying the same reasoning to the inclusion map $A \rightarrow B$. 

Assume now that \eqref{EqCond2} holds. Then we can simply copy the proof of \cite{BMO20}*{Corollorary A.2} to show that \eqref{EqCond3} holds. We spell out the details for the minor modifications that need to be made.

Take any normal state $\omega_0$ on $A_0$, and put $\omega =  \omega_0\circ P$. Let 
\[
s_{\omega}: A \times A \rightarrow \C,\qquad s_{\omega}(a',a) = \langle \xi_{\omega},\pi_A(a')\rho_A(a^*)\xi_{\omega}\rangle
\]
be the associated self-polar form, which is anti-linear in the \emph{second} variable. Since $P_2^*\xi_{\omega_0} = \xi_{\omega}$, we get by our assumption that
\[
\sum_{i=1}^n s_{\omega}(a_i,a_i) = \langle \xi_{\omega_0}, P_2\sum_{i=1}^n \pi_A(a_i)\rho_A(a_i^*)P_2^*\xi_{\omega_0}\rangle \leq\| Q_2\sum_{i=1}^n \pi_B(a_i) \rho_B(a_i^*)Q_2^*\|,\qquad a_1,\ldots,a_n \in A.
\]

So considering the convex cone 
\[
C := \{Q_2\sum_{i=1}^n \pi_B(a_i) \rho_B(a_i^*)Q_2^* \mid n\in\N, a_i \in A\} \subseteq B(L^2(B_0)),
\]
we can define a superlinear function 
\[
q: C \rightarrow \R,\quad Q_2\sum_{i=1}^n \pi_B(a_i) \rho_B(a_i^*)Q_2^* \mapsto \sum_{i=1}^n s_{\omega}(a_i,a_i). 
\]
Using \cite{BMO20}*{Lemma A.5} and using the same argument as in the proof of \cite{BMO20}*{Corollary A.2}, this entails that there exists a state $\psi$ on $B(L^2(B_0))$ with 
\[
\sum_i s_{\omega}(a_i,a_i)  \leq \mathrm{Re}\left(\psi(Q_2\sum_i \pi_B(a_i) \rho_B(a_i^*)Q_2^*)\right).
\]
Since $Q_2$ intertwines $J_{B_0}$ with $J_B$, we can replace $\psi$ by $\frac{1}{2}(\psi + \psi(J_{B_0}(-)^*J_{B_0}))$ to get that 
\[
s: B \times B \rightarrow \C,\qquad (b',b) \mapsto \psi(Q_2 \pi_B(b') \rho_B(b^*)Q_2^*)
\]
is a sesqui-linear hermitian form with $s_{\omega}(a,a)\leq s(a,a)$ for all $a\in A$. Hence $s_{\omega} = s$ by \cite{Wor74}*{Theorem 1.1 and Theorem 2.1}, leading to 
\[
\langle \xi_{\omega}, \pi_A(a')\rho_A(a^*)\xi_{\omega}\rangle = \psi(Q_2\pi_B(a')\rho_B(a^*)Q_2^*),\qquad a,a'\in A.
\]
So \eqref{EqCond3} follows. 
\end{proof}

\subsection{Proof of Theorem \ref{TheoMainAmenab}}

Assume now again that $\G = (M,\Delta)$ is a CQG, and that $A,B$ are $\sigma$-finite $\G$-W$^*$-algebras. We write then by $P,Q$ the associated faithful normal conditional expectations 
\[
P: A \rightarrow A_0 = A^{\G}, \quad a \mapsto (\id\otimes \Phi)\alpha(a),\qquad Q: B \rightarrow B_0 = B^{\G}, \quad b \mapsto (\id\otimes \Phi)\beta(b).
\]
Recalling the notation \eqref{EqDefLowerTheta} and \eqref{EqDefTrivRep}, we have in this case that
\[
P_2^* \circ P_2  = \theta_{L^2(A)}(p_{\varepsilon}),
\]
and similarly for $Q$.

Assume now that $A \subseteq B$ is an equivariant normal inclusion. Then in particular \eqref{EqInclSquares} is a conditioned square. 

We can now complete the proof of Theorem \ref{TheoMainAmenab}. Indeed, assume that $A \subseteq B$ is equivariantly amenable. As in \eqref{EqRestrRep}, we then get a $*$-representation $\theta^{L^2(B)}_{\square}$ of $\mcA \square\overline{\mcA}$ on $L^2(B_0)$ by 
\[
\theta^{L^2(B)}_{\square}(\sum_i a_i' \otimes \overline{a_i}) = \sum_i Q_2\pi_B(a_i')\rho_B(\delta^{1/2}(a_{i(1)}^*)a_{i(0)}^*)Q_2^*,
\]
and we know by Proposition \ref{weak containment} that 
\[
{}_AL^2(A)_A \preccurlyeq {}_AL^2(B)_A \textrm{ equivariantly }\quad \Leftrightarrow  \quad \theta_{\square}^{L^2(A)} \preccurlyeq \theta_{\square}^{L^2(B)}.
\]
But by the conclusion of Theorem \ref{TheoBMO}, we have 
\[
|\langle \xi,\theta_{\square}^{L^2(A)}(z)\xi\rangle| \leq \|\theta_{\square}^{L^2(B)}(z)\|,\qquad \forall z\in \mcA\square \overline{\mcA},\forall \xi\in P^{\natural}_{A_0}, \|\xi\|=1.
\]
Since $P^{\natural}_{A_0}$ contains the $\pi_{A_0}$-cyclic vector $\xi_{\phi_0}$, and hence a fortiori a $\theta^{L^2(A)}_{\square}$-cyclic vector, we conclude that 
\begin{equation}\label{EqNormEstSquare}
\|\theta^{L^2(A)}_{\square}(z)\|\leq \|\theta^{L^2(B)}_{\square}(z)\|,\qquad \forall z\in \mcA\square\overline{\mcA},
\end{equation}
finishing the proof.

We now prove the analogue of Theorem \ref{TheoMainAmenab} for discrete quantum groups. First, we observe the following general result:

\begin{Prop}\label{crossed}
    Let $\G$ be a locally compact quantum group and $A\subseteq B$ a normal, unital $\G$-inclusion of $\G$-$W^*$-algebras. The following are equivalent:
    \begin{enumerate}
        \item $A\subseteq B$ is (strongly) equivariantly $\G$-amenable.
        \item $A\rtimes \G\subseteq B\rtimes \G$ is (strongly) equivariantly $\check{\G}$-amenable.
    \end{enumerate}
\end{Prop}
\begin{proof}
    In the non-strong setting, this is proven in \cite{DR24}*{Proposition 2.6}. In the strong setting, this follows by combining \cite{DCDR23}*{Proposition 5.27, Proposition 5.28 and Theorem 6.4}.
\end{proof}

\begin{Theorem}\label{discretemain}
    Let $\mathbbl{\Gamma}$ a discrete quantum group and assume $A \subseteq B$ is an equivariant unital normal inclusion of $\sigma$-finite $\mathbbl{\Gamma}$-dynamical von Neumann algebras. Then the following are equivalent: 
\begin{enumerate}
\item $A \subseteq B$ is equivariantly amenable. 
\item $A \subseteq B$ is strongly equivariantly amenable. 
\end{enumerate}
\end{Theorem}
\begin{proof}  Consider the canonical faithful ucp normal conditional expectation 
$$B \rtimes_\beta \mathbbl{\Gamma} \to \beta(B)\cong B.$$
Composing with a faithful normal state on $B$, we see that $B\rtimes_\beta \mathbbl{\Gamma}$ is $\sigma$-finite as well. We then argue:
\begin{align*}
    A\subseteq B \textrm{\ is\ } \mathbbl{\Gamma} \textrm{-equivariantly amenable} &\stackrel{\textrm{Prop.} \ref{crossed}}\iff A\rtimes \mathbbl{\Gamma}\subseteq B\rtimes \mathbbl{\Gamma} \textrm{\ is\ } \check{\mathbbl{\Gamma}} \textrm{-equivariantly amenable} \\
    &\stackrel{\textrm{Thm.} \ref{TheoMainAmenab}}\iff A\rtimes \mathbbl{\Gamma}\subseteq B\rtimes \mathbbl{\Gamma} \textrm{\ is\ strongly\ } \check{\mathbbl{\Gamma}} \textrm{-equivariantly amenable} \\
       &\stackrel{\textrm{Prop.} \ref{crossed}}\iff A\subseteq B \textrm{\ is\ strongly } \mathbbl{\Gamma} \textrm{-equivariantly amenable}.
\end{align*}
The result follows.
\end{proof}

Let us give some concrete applications of the above results:

\begin{Theorem}\label{application}
     Let $\G$ be a compact quantum group and $(A, \alpha)$ be a $\sigma$-finite $\G$-$W^*$-algebra. The following statements hold:
    \begin{enumerate}
        \item $(A, \alpha)$ is strongly amenable if and only if $(A, \alpha)$ is amenable.
        \item $(A, \alpha)$ is strongly inner amenable if and only if $(A, \alpha)$ is inner amenable.
    \end{enumerate}
\end{Theorem}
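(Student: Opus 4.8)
The plan is to obtain both equivalences as immediate consequences of Theorem \ref{TheoMainAmenab}, applied to the two $\G$-inclusions that define (inner) amenability; once the hypotheses of that theorem are checked, nothing further is needed, since the implication strong $\Rightarrow$ ordinary is already recorded in \cite{DCDR23}. In both cases $\alpha(A)\cong A$ is $\sigma$-finite, so the only genuine point to verify is that the ambient algebra $B$ is again $\sigma$-finite and that the inclusion is a $\G$-equivariant unital normal inclusion.

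First I would treat \textbf{(1)}. By definition $(A,\alpha)$ is (strongly) amenable exactly when $\alpha(A)\subseteq A\bar{\otimes}M$ is (strongly) equivariantly amenable, where $M=L^\infty(\G)$ carries the $\G$-action $\id\otimes\Delta$. The coaction identity $(\id\otimes\Delta)\alpha=(\alpha\otimes\id)\alpha$ says precisely that $\alpha\colon(A,\alpha)\to(A\bar{\otimes}M,\id\otimes\Delta)$ is a $\G$-equivariant unital normal $*$-homomorphism, while coassociativity of $\Delta$ makes $(A\bar{\otimes}M,\id\otimes\Delta)$ a $\G$-$W^*$-algebra. Since $A$ is $\sigma$-finite it admits a faithful normal state $\phi$, and $M$ carries the faithful normal Haar state $\Phi$; the product $\phi\otimes\Phi$ is faithful and normal on $A\bar{\otimes}M$, so the latter is $\sigma$-finite. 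Hence Theorem \ref{TheoMainAmenab} applies and gives the equivalence of amenability with strong amenability.

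Next I would treat \textbf{(2)}, where the relevant inclusion is $\alpha(A)\subseteq A\rtimes_\alpha\G$ with the adjoint $\G$-action; this is again a $\G$-equivariant unital normal inclusion by the crossed-product formalism recalled above. The substantive issue is $\sigma$-finiteness of $A\rtimes_\alpha\G$. Here I would use the defining embedding $A\rtimes_\alpha\G\subseteq A\bar{\otimes}B(L^2(\G))$: provided $L^2(\G)$ is separable — equivalently $\Irr(\G)$ is countable — the algebra $B(L^2(\G))$ admits a faithful normal state, and tensoring with $\phi$ produces a faithful normal state on $A\bar{\otimes}B(L^2(\G))$, so the von Neumann subalgebra $A\rtimes_\alpha\G$ is $\sigma$-finite. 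Theorem \ref{TheoMainAmenab} then yields the equivalence of inner amenability with strong inner amenability.

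The hard part is exactly this last $\sigma$-finiteness of the crossed product. Unlike $A\bar{\otimes}M$, the crossed product does not inherit $\sigma$-finiteness from $A$ alone: for a discrete group $\Gamma$ one has $\C\rtimes\widehat{\Gamma}\cong\ell^\infty(\Gamma)$, which fails to be $\sigma$-finite when $\Gamma$ is uncountable. Thus one genuinely needs separability of $L^2(\G)$ (countability of $\Irr(\G)$) to place $A\rtimes_\alpha\G$ inside a $\sigma$-finite algebra. Granting this, the remaining verifications — unitality, normality, and $\G$-equivariance of the two inclusions — are routine, and both statements fall out as direct corollaries of Theorem \ref{TheoMainAmenab}.
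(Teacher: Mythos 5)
Your treatment of part (1) is correct and is exactly the paper's argument: $A\bar{\otimes}L^\infty(\G)$ is $\sigma$-finite because the Haar state is faithful and normal, so Theorem \ref{TheoMainAmenab} applies directly. For part (2), however, there is a genuine gap. You correctly identify that $\sigma$-finiteness of $A\rtimes_\alpha\G$ can fail when $\Irr(\G)$ is uncountable (your example $\C\rtimes\check{\GGamma}\cong\ell^\infty(\GGamma)$ is apt), but the theorem is stated for an arbitrary compact quantum group $\G$ with no separability hypothesis, and you resolve the difficulty only by ``granting'' countability of $\Irr(\G)$. That leaves the general case unproved: when $\mcO(\G)$ has uncountable dimension, Theorem \ref{TheoMainAmenab} simply does not apply to the inclusion $\alpha(A)\subseteq A\rtimes_\alpha\G$, and no amount of routine verification closes this.

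The paper handles the general case by a reduction argument that you would need to supply. Given an element $z\in\mcA\rtimes\G\ltimes\overline{\mcA}$, one chooses a Hopf $*$-subalgebra $\mcO(\Hh)\subseteq\mcO(\G)$ of countable dimension over which $z$ lives and such that $\|\theta^{L^2(A)}(z)\|=\|\theta^{L^2(A(\Hh))}(z)\|$, where $A(\Hh)\subseteq A$ is the von Neumann subalgebra spanned by the $\Irr(\Hh)$-spectral subspaces. The given $\G$-equivariant conditional expectation restricts to an $\Hh$-equivariant one $A(\Hh)\rtimes_\alpha\Hh\to A(\Hh)$, and since $A(\Hh)\rtimes_\alpha\Hh$ \emph{is} $\sigma$-finite, the countable-dimensional case yields $\|\theta^{L^2(A(\Hh))}(z)\|\leq\|\theta^{L^2(A(\Hh)\rtimes_\alpha\Hh)}(z)\|$. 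Finally, one constructs a $\G$-equivariant \emph{normal} conditional expectation from $A\rtimes\G$ onto $A(\Hh)\rtimes\Hh$ (via the canonical surjection $\ell^\infty(\check{\G})\to\ell^\infty(\check{\Hh})$) and invokes Lemma \ref{LemEstimateNorm} to get $\|\theta^{L^2(A(\Hh)\rtimes_\alpha\Hh)}(z)\|\leq\|\theta^{L^2(A\rtimes_\alpha\G)}(z)\|$. Chaining these estimates over all $z$ gives strong inner amenability in full generality. Without some such reduction, your proof only establishes part (2) under an additional separability assumption that is not in the statement.
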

\begin{proof}
By definition, we have that  \begin{itemize}
            \item  $(A, \alpha)$ is (strongly) amenable iff the $\G$-inclusion $\alpha(A) \subseteq A \bar{\otimes}L^\infty(\G)$ is (strongly) equivariantly amenable.
        \item $(A, \alpha)$ is (strongly) inner amenable iff the $\G$-inclusion $\alpha(A)\subseteq A\rtimes_\alpha \G$ is (strongly) equivariantly amenable.
    \end{itemize}

Hence if $A$ is $\sigma$-finite, the first bullet point in Theorem \ref{application} follows immediately from Theorem \ref{TheoMainAmenab} and the observation that also $A\bar{\otimes}L^\infty(\G)$ is $\sigma$-finite.

For the second bullet point of Theorem \ref{application}, we can conclude similarly in case $\mcO(\G)$ has countable dimension, since then again $A\rtimes_\alpha \G$ is $\sigma$-finite. 

For the general case, we proceed as follows. As this concerns only a minor technical improvement, we will be rather brief. 

If $\mcO(\Hh) \subseteq \mcO(\G)$ is any Hopf $*$-subalgebra, let us write $A(\Hh) \subseteq A$ for the unital von Neumann subalgebra spanned by the $\Irr(\Hh)$-spectral subspaces of $A$. Then  $A(\Hh)$ is a $\G$-equivariant von Neumann subalgebra, endowed with a $\G$-equivariant normal conditional expectation $A\rightarrow A(\Hh)$ (killing all spectral subspaces at irreducible representations of $\Irr(\G) \setminus \Irr(\Hh)$). In particular, $L^2(A(\Hh))\subseteq L^2(A)$ as a $\G$-equivariant $A(\Hh)$-$A(\Hh)$-correspondence. We also recall that the unitary antipode of $\mcO(\G)$ restricts to the unitary antipode of $\mcO(\Hh)$. 

Fix now some $z = \sum_i a_i' \otimes \Phi(-g_i) \otimes \overline{a_i}$ in $\mcA\rtimes \G \ltimes \overline{\mcA}$. Then we can find a Hopf $*$-subalgebra $\mcO(\Hh) \subseteq \mcO(\G)$ of countable dimension such that 
\begin{itemize}
\item $\{\alpha(a_i),\alpha(a_i')\} \subseteq \mcA \odot \mcO(\Hh)$,
\item $\{g_i\}\subseteq \mcO(\Hh)$, and 
\item $\|\theta^{L^2(A)}(z)\| = \|\theta^{L^2(A)}(z)_{\mid L^2(A(\Hh))}\| = \|\theta^{L^2(A(\Hh))}(z)\|$.
\end{itemize}

Now our given $\G$-equivariant conditional expectation $E: A\rtimes_{\alpha}\G\rightarrow A$ will restrict to an $\Hh$-equivariant conditional expectation $E': A(\Hh)\rtimes_{\alpha}\Hh \rightarrow A(\Hh)$. For this case, we can then conclude that 
\begin{equation}\label{EqPrelimEst}
\|\theta^{L^2(A)}(z)\|  = \|\theta^{L^2(A(\Hh))}(z)\| \leq \|\theta^{L^2(A(\Hh)\rtimes_{\alpha}\Hh)}(z)\|.
\end{equation}
However, it is also easy to see that we have a $\G$-equivariant normal condition expectation from $A\rtimes \G$ onto the (non-unital) von Neumann subalgebra $A(\Hh)\rtimes \Hh$, factorizing as
\[
A\rtimes \G\rightarrow A(\Hh)\rtimes \G\rightarrow A(\Hh) \rtimes \Hh.
\]
Indeed, it suffices to note that there is a canonical normal surjective $*$-morphism $\check{\kappa}: \ell^\infty(\check{\G})\to \ell^\infty(\check{\mathbb{H}})$. An argument similar as in the proof of \cite{DR24}*{Proposition 8.1} then shows that there is a normal conditional expectation
$$A(\mathbb{H})\rtimes \G \to A(\mathbb{H})\rtimes \mathbb{H}$$
which maps
$$\alpha(a)\mapsto \alpha(a), \quad 1 \otimes \check{y}\mapsto 1\otimes \check{\kappa}(\check{y}), \quad a \in A(\mathbb{H}), \quad \check{y}\in \ell^\infty(\check{\mathbb{G}}).$$
It hence follows from Lemma \ref{LemEstimateNorm} that we can also write 
\[
\|\theta^{L^2(A(\Hh)\rtimes_{\alpha}\Hh)}(z)\|\leq \|\theta^{L^2(A\rtimes_{\alpha}\G)}(z)\|.
\]
Combining with \eqref{EqPrelimEst}, we conclude that 
\[
\|\theta^{L^2(A)}(z)\| \leq \|\theta^{L^2(A\rtimes_{\alpha}\G)}(z)\|.
\]
As $z$ was arbitrary, this shows that $L^2(A)$ is $\G$-equivariantly weakly contained in $L^2(A\rtimes \G)$, i.e.\ $A$ is strongly inner amenable.
\end{proof}

Note that the $\sigma$-finiteness assumption on $A$ in the previous theorem is satisfied as soon as $A^{\G}$ is $\sigma$-finite, e.g.\ when $(A,\alpha)$ is ergodic. 

By duality, we then also obtain the following result:
\begin{Theorem}\label{application2}
    Let $\mathbbl{\Gamma}$ be a discrete quantum group, and let $(A, \alpha)$ be a $\sigma$-finite $\mathbbl{\Gamma}$-$W^*$-algebra. The following statements hold:
    \begin{enumerate}
        \item  $(A, \alpha)$ is strongly amenable if and only if $(A, \alpha)$ is amenable.
        \item $(A, \alpha)$ is strongly inner amenable if and only if $(A, \alpha)$ is inner amenable.
    \end{enumerate}

    In particular, $\mathbbl{\Gamma}$ is strongly inner amenable if and only if $\mathbbl{\Gamma}$ is inner amenable.

\end{Theorem}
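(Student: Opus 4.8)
The plan is to dualize everything to the compact setting and then invoke Theorem \ref{application}. Since $\mathbbl{\Gamma}$ is discrete, its dual $\G := \check{\mathbbl{\Gamma}}$ is a compact quantum group, so Theorem \ref{application} is available for $\G$-$W^*$-algebras. Given a $\mathbbl{\Gamma}$-$W^*$-algebra $(A,\alpha)$, I would form the crossed product $B := A \rtimes_\alpha \mathbbl{\Gamma}$ equipped with its dual $\G$-action $\hat{\alpha}$. The crux is the duality, which interchanges amenability and inner amenability upon passing to the crossed product with its dual action: for a $\mathbbl{\Gamma}$-$W^*$-algebra $(A,\alpha)$ one has, in both the strong and the non-strong form,
\[
(A,\alpha) \text{ amenable} \iff (B,\hat{\alpha}) \text{ inner amenable}, \qquad (A,\alpha) \text{ inner amenable} \iff (B,\hat{\alpha}) \text{ amenable}.
\]
These equivalences rest on the Takesaki--Takai biduality $B \rtimes_{\hat{\alpha}} \G \cong A \bar{\otimes} B(\ell^2(\mathbbl{\Gamma}))$ together with the fact that the duality on equivariant correspondences developed in \cite{DCDR23} preserves the weak containment relation $\preccurlyeq$ and carries the trivial correspondence $L^2(A)$, the ``amenability'' inclusion correspondence $L^2(A \bar{\otimes} L^\infty(\mathbbl{\Gamma}))$ and the ``inner amenability'' inclusion correspondence $L^2(A \rtimes_\alpha \mathbbl{\Gamma})$ onto their $\G$-equivariant counterparts over $B$.

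Before applying Theorem \ref{application}, I would verify that $B$ is $\sigma$-finite whenever $A$ is. As $\G = \check{\mathbbl{\Gamma}}$ is compact, the dual action $\hat{\alpha}$ yields a faithful normal conditional expectation $E_\G = (\id \otimes \Phi)\hat{\alpha}\colon B \to B^{\G} = \hat{\alpha}(B)^{\G}$, whose range is exactly $\alpha(A) \cong A$. Composing a faithful normal state of $A$ with $E_\G$ produces a faithful normal state on $B$, so $B$ is $\sigma$-finite, and Theorem \ref{application} applies to $(B,\hat{\alpha})$.

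With these two inputs in place, the conclusion is pure bookkeeping. For part (1), chaining the strong and non-strong forms of the first duality with Theorem \ref{application}.(2) applied to the $\sigma$-finite $\G$-$W^*$-algebra $B$ gives
\[
(A,\alpha) \text{ strongly amenable} \iff (B,\hat{\alpha}) \text{ strongly inner amenable} \iff (B,\hat{\alpha}) \text{ inner amenable} \iff (A,\alpha) \text{ amenable}.
\]
For part (2), the symmetric chain using the second duality and Theorem \ref{application}.(1) gives
\[
(A,\alpha) \text{ strongly inner amenable} \iff (B,\hat{\alpha}) \text{ strongly amenable} \iff (B,\hat{\alpha}) \text{ amenable} \iff (A,\alpha) \text{ inner amenable}.
\]
The final assertion is the special case of (2) with $A = \mathbb{C}$, so that $\mathbbl{\Gamma}$ is strongly inner amenable if and only if it is inner amenable.

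The main obstacle I anticipate is not the $\sigma$-finiteness check or the final chaining, but pinning down the duality equivalences with the correct pairing of amenability against inner amenability and, crucially, confirming that the correspondence-level duality functor of \cite{DCDR23} is compatible with $\preccurlyeq$ and sends the distinguished trivial and inclusion correspondences to the expected ones over $B$. Once that identification is secured at the level of weak containment of correspondences, the compact case (Theorem \ref{application}), which is the technical heart proved via Proposition \ref{weak containment} and the non-commutative $L^p$-machinery, does all the remaining work.
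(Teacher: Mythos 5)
Your proposal is correct and follows essentially the same route as the paper: establish $\sigma$-finiteness of $A\rtimes_\alpha\mathbbl{\Gamma}$ via the canonical faithful normal conditional expectation onto $\alpha(A)\cong A$, then combine the duality that swaps (strong) amenability of $(A,\alpha)$ with (strong) inner amenability of the crossed product under its dual action --- which the paper simply cites as \cite[Theorem 6.12]{DCDR23} --- with Theorem \ref{application}. The duality step you flag as the main obstacle is precisely the content of that cited theorem, so no further work is needed beyond the chaining you describe.
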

\begin{proof} The equivalences $(1)$ and $(2)$ immediately follow from Theorem \ref{application} and \cite{DCDR23}*{Theorem 6.12}. The equivalence between strong inner amenability and inner amenability for $\mathbbl{\Gamma}$ follows by applying $(2)$ to the trivial action $\mathbb{C}\curvearrowleft \mathbbl{\Gamma}$.
\end{proof}

\begin{Rem}
    Taking $A= \mathbb{C}$ in Theorem \ref{application2}.(1), we see that $\mathbbl{\Gamma}$ is amenable if and only if the compact dual $\check{\mathbbl{\Gamma}}$ is co-amenable, which is Tomatsu's celebrated result \cite{Tom06}. Therefore, Theorem \ref{application2} can be seen as a dynamical generalization of Tomatsu's result.
\end{Rem}

\section{Examples of non-amenable actions}

In this section, we provide examples of non-amenable actions of co-amenable compact and amenable discrete quantum groups. We start with some general remarks:

\begin{itemize}
\item If $\G$ is a locally compact quantum group, then the translation action of $\G$ on itself is amenable if and only if the dual quantum group $\check{\G}$ is inner amenable, and this holds automatically if $\G$ is co-amenable (\cite{DR24}*{Theorem 6.4}, \cite{Cr19}*{Corollary 3.9}). Using this, it is not so hard to find examples of non-amenable actions of non-co-amenable compact quantum groups (e.g.\ the dual of $A_o(F)$ is a discrete quantum group that is not inner amenable for an appropriate matrix $F \in GL_n(\mathbb{C})$).
\item If $\G$ is a compact quantum group of Kac type, then every action of $\G$ is amenable, as easily follows by combining \cite{DR24}*{Lemma 6.12} and \cite{DCDR23}*{Theorem 6.12}. It is also possible to prove this directly using modular theory, as communicated to us by S. Vaes.
\item If $\G$ is a compact quantum group and $\alpha: A\curvearrowleft \G$ an action that is not amenable, then the induced action $A\rtimes_\alpha \G \curvearrowleft \check{\G}$ is not inner amenable \cite{DCDR23}*{Theorem 6.12} and a fortiori not amenable \cite{DR24}*{Proposition 6.9}. Thus, any example of a co-amenable compact quantum group acting non-amenably on a von Neumann algebra gives an example of an amenable discrete quantum group acting non-amenably on a von Neumann algebra.
\end{itemize}

We now provide two strategies that can be useful to detect (non-)amenability of a given action of a compact quantum group on a von Neumann algebra, and apply these strategies to concrete examples. This will yield the first examples of co-amenable compact quantum groups acting non-amenably on von Neumann algebras (and by duality, also the first examples of amenable discrete quantum groups acting non-amenably on von Neumann algebras).

\subsection{First strategy} Let $\G=(M,\Delta)$ be a compact quantum group and $(A, \alpha)$ be a $\G$-$W^*$-algebra. The semi-coarse $\G$-$A$-$A$ correspondence $S_A^\G$ \cite{DCDR23}*{Example 2.6} is given by the following data:
\begin{itemize}
    \item The Hilbert space $L^2(A)\otimes L^2(\G).$
    \item The normal $*$-representation $\pi_S:= (\pi_A\otimes \id)\circ \alpha: A \to B(L^2(A)\otimes L^2(\G))$.
    \item The normal anti-$*$-representation $\rho_S:= (\rho_A\otimes \rho_M)\circ \alpha: A \to B(L^2(A)\otimes L^2(\G))$.
    \item The unitary $\G$-representation $U_S := V_{23}\in B(L^2(A)\otimes  L^2(\G))\bar{\otimes} M.$
\end{itemize}
We recall from \cite{DCDR23} that $(A, \alpha)$ is strongly $\G$-amenable (and hence $\G$-amenable) if and only if $L^2(A) \preccurlyeq S_A^\G$ equivariantly. By Proposition \ref{weak containment}, this is equivalent with the weak containment
\begin{equation}\label{weak}
    \epsilon_A:=\theta^{L^2(A)}_{\square}\preccurlyeq \theta^{S^\G_A}_\square=: \theta_S.
\end{equation}

In concrete examples, the aim is to  describe explicitly the cotensor product $\mathcal{A}\square \overline{\mathcal{A}}$ and the $*$-representations $\epsilon_A, \theta_S$ on $\mathcal{A}\square \overline{\mathcal{A}}$ in a way that allows us to detect if the weak containment \eqref{weak} holds. 

We illustrate this strategy with a concrete example. More concretely, if  $(A, \alpha)$ is an ergodic right $\G$-$W^*$-algebra, i.e. $A^\G = \C 1$, then we show that the two $*$-representations $\epsilon_A$ and $\theta_S$ admit a more convenient description (Proposition \ref{concrete}). In a next step, we will choose a specific ergodic $\G$-$W^*$-algebra $(A, \alpha)$ such that the cotensor product $\mathcal{A} \square\overline{\mathcal{A}}$ becomes a concrete well-understood $*$-algebra (Proposition \ref{Prop}) on which the $*$-representations $\epsilon_A$ and $\theta_S$ can be explicitly calculated (Proposition \ref{semi}). Concrete knowledge of these $*$-representations will then allow us to characterize when the weak containment \eqref{weak} holds (Theorem \ref{TheoMain2}).

Let us first recall some useful facts that follow from the ergodicity of $(A,\alpha)$. There is a unique normal $\G$-equivariant state $\Phi_A: A \to \mathbb{C}$, explicitly given by $\Phi_A = (\id \otimes \Phi)\circ \alpha: A \to A^\G \cong \mathbb{C}$. It is faithful, and we can thus consider the standard Hilbert space $L^2(A)$ and associated modular group  $\{\sigma_t^A\}_{t\in \mathbb{R}}$ with respect to this state. The algebraic core $\mathcal{A}$ is analytic for this modular group, and the associated canonical unitary $\G$-representation is concretely implemented by \eqref{action of dual}.

Note now that \eqref{action of dual} implies that $\theta^{L^2(A)}(p_\varepsilon)\Lambda_A(a) = \Lambda_A(1)\Phi_A(a)$ for $a\in \mathcal{A}$. Therefore, 
\begin{equation}\label{reference}
    \theta^{L^2(A)}(p_\varepsilon) L^2(A) = \mathbb{C}\Lambda_A(1)\cong \mathbb{C},
\end{equation}
and we can view
$\epsilon_A= \theta_\square^{L^2(A)}$ as a $*$-character $\mathcal{A} \overset{\G}{\square}\overline{\mathcal{A}} \to \mathbb{C}.$ Similarly, note that 
$$V_{23}(p_\varepsilon)(\Lambda_A(a)\otimes \Lambda(x)) = \Lambda_A(a)\otimes p_\varepsilon(x)\Lambda(1), \quad a \in \mathcal{A}, \quad x \in \mathcal{O}(\G).$$
Therefore, $\theta^{S_A^\G}(p_\varepsilon)(L^2(A)\otimes L^2(\G)) = L^2(A)\otimes \mathbb{C}\Lambda(1)\cong L^2(A)$ and we can view $\theta_S=\theta_\square^{S_A^\G}$ as a $*$-representation $\mathcal{A} \overset{\G}{\square}\overline{\mathcal{A}} \to B(L^2(A)).$

It is desirable to have concrete formulas for these $*$-representations. For this, it is convenient to introduce the following notation
$$\kappa_A: \mathcal{A}\to \mathcal{A}: a \mapsto \sigma_{-i/2}^A(a_{(0)})\delta^{-1/2}(a_{(1)}).$$

\begin{Prop}\label{concrete} Let $(A, \alpha)$ be an ergodic $\G$-$W^*$-algebra.\begin{enumerate} 
    \item The $*$-representation $\epsilon_A: \mathcal{A} \overset{\G}{\square}\overline{\mathcal{A}}\to \mathbb{C}$ is given by 
    $$\epsilon_A(\sum_{j=1}^n a_j \otimes \overline{b_j}) = \sum_{j=1}^n \Phi_A(b_j^*\kappa_{A}(a_j)).$$
    \item  The $*$-representation $\theta_S: \mathcal{A} \overset{\G}{\square}\overline{\mathcal{A}}\to B(L^2(A))$ is given by
    \begin{align*}
        \theta_S(\sum_{j=1}^n a_j\otimes \overline{b_j})= \sum_{j=1}^n \pi_A(a_{j,(0)}) \rho_A(b_{j,(0)}^*) \Phi_\G(b_{j,(1)}^* \kappa_{L^\infty(\G)}(a_{j,(1)})).
    \end{align*}
\end{enumerate}
  
\end{Prop}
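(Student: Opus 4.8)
The plan is to evaluate the two representations by specialising the general formula \eqref{EqRestrRep} for $\theta_\square^{\Hsp}$ to the correspondences $\Hsp = L^2(A)$ and $\Hsp = S_A^\G$, and then to simplify using modular theory together with the cotensor relation $(\alpha\odot\id)z = (\id\odot\alpha_c)z$ defining $\mcA\overset{\G}{\square}\overline{\mcA}$. The concrete descriptions of the cut-down subspaces $\theta^{L^2(A)}(p_\varepsilon)L^2(A)\cong\C$ and $\theta^{S_A^\G}(p_\varepsilon)(L^2(A)\otimes L^2(\G))\cong L^2(A)$ recorded just above the statement let me read off $\epsilon_A$ as a character and $\theta_S$ as a representation on $L^2(A)$.

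For (1), since $\epsilon_A$ is a character and $\Lambda_A(1)$ is a unit vector spanning the relevant subspace, I would compute $\epsilon_A(z) = \langle\Lambda_A(1),\theta_\square^{L^2(A)}(z)\Lambda_A(1)\rangle$ from \eqref{EqRestrRep}. Realising $L^2(A)$ in the standard (Haagerup) picture, so that $\pi_A,\rho_A$ are left and right multiplication and $\Lambda_A(1)$ is the density $h^{1/2}$, the vector $\rho_A(b_{j(0)}^*)\Lambda_A(1)$ becomes $h^{1/2}b_{j(0)}^*$; pairing against $\Lambda_A(1)$ via $\langle x,y\rangle = \trace(x^*y)$ and using $h^{1/2}a = \sigma_{-i/2}^A(a)h^{1/2}$ gives $\epsilon_A(z) = \sum_j \delta^{1/2}(b_{j(1)}^*)\Phi_A(b_{j(0)}^*\sigma_{-i/2}^A(a_j))$. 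To recognise this as $\sum_j \Phi_A(b_j^*\kappa_A(a_j))$ I would write the cotensor relation as $\sum_j a_{j(0)}\otimes a_{j(1)}\otimes\overline{b_j} = \sum_j a_j\otimes R(b_{j(1)})^*\otimes\overline{b_{j(0)}}$ and apply the functional $x\otimes m\otimes\overline{y}\mapsto \Phi_A(y^*\sigma_{-i/2}^A(x))\delta^{-1/2}(m)$ to both sides; this transports the coaction from the $b$-leg onto the $a$-leg and reduces the claim to the elementary Woronowicz-character identity $\delta^{1/2}(m^*) = \delta^{-1/2}(R(m)^*)$, which I would verify on matrix coefficients.

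For (2), I would again substitute \eqref{EqRestrRep} into $\theta_S$, now with $\pi_S = (\pi_A\otimes\id)\alpha$ and $\rho_S = (\rho_A\otimes\rho_M)\alpha$, acting on $\eta\otimes\Lambda(1)$. Projecting the $L^2(\G)$-leg back onto $\C\Lambda(1)$ splits the result into an $A$-part and a $\G$-part: the $A$-part produces the operator $\pi_A(a_{j(0)})\rho_A(b_{j(0)}^*)$, while the $\G$-part is a Haar-state scalar obtained by the same modular manipulation as in (1) carried out inside $(L^\infty(\G),\Phi_\G)$, where $\sigma_{-i/2}$ must further be unpacked through \eqref{EqFormSigmaScaling} in terms of $\delta$ and $\Delta$. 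The coaction coming from \eqref{EqRestrRep} sits on top of the coaction built into $\pi_S,\rho_S$, so I would use coassociativity $(\alpha\otimes\id)\alpha = (\id\otimes\Delta)\alpha$ to recombine them, and the cotensor relation once more to push the surviving coaction onto the $a$-leg, so that the $\G$-scalar assembles precisely into $\Phi_\G(b_{j(1)}^*\kappa_{L^\infty(\G)}(a_{j(1)}))$.

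The main obstacle is exactly this modular and Sweedler bookkeeping: tracking the half-shifts $\sigma_{-i/2}$ and the $\delta^{\pm1/2}$-twists as they are redistributed between the two tensor legs and recombined, and in part (2) disentangling the two superimposed coactions. The conceptual content is light — once the cotensor relation moves the coaction onto the $a$-leg everything collapses into the single operator $\kappa_A$ (resp.\ $\kappa_{L^\infty(\G)}$) — but checking that the modular factors land in exactly that combination is where the care is required.
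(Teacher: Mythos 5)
Your proposal is correct and follows essentially the same route as the paper's proof: both evaluate \eqref{EqRestrRep} on the cyclic vector $\Lambda_A(1)$ (resp.\ on $\Lambda_A(c)\otimes\Lambda(1)$), use the modular relation $\rho_A(x)\Lambda_A(1)=\Lambda_A(\sigma_{-i/2}(x))$, and then transfer the coaction from the $b$-leg to the $a$-leg via the cotensor relation together with the identity $\delta^{\mp 1/2}\circ R=\delta^{\pm 1/2}$, which is exactly the step $\delta^{1/2}(R(a_{j(1)}))=\delta^{-1/2}(a_{j(1)})$ in the paper. The only cosmetic differences are that you phrase the computation in the Haagerup $L^2$-picture and apply the cotensor relation after taking the inner product rather than before.
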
\begin{proof} Observe that 
\begin{align}\label{cotensor}
    \sum_{j=1}^n a_j\otimes \overline{b_j}\in \mathcal{A} \overset{\G}{\square} \overline{\mathcal{A}}&\iff \sum_{j=1}^n a_{j,(0)}\otimes R(a_{j,(1)})\otimes b_j^* = \sum_{j=1}^n a_j\otimes b_{j,(1)}^*\otimes b_{j,(0)}^*.
\end{align}

(1) Given $\sum_j a_j\otimes \overline{b_j}\in \mathcal{A}\overset{\G}{\square} \overline{\mathcal{A}}$, we find
\begin{align*}
    \theta^{L^2(A)}_\square(\sum_j a_j\otimes \overline{b_j}) \Lambda_A(1)&= \sum_j \Lambda_A(a_j \sigma_{-i/2}^A(b_{j,(0)}^*)) \delta^{1/2}(b_{j,(1)}^*)\\
    &\stackrel{\eqref{cotensor}}= \sum_j \Lambda_A(a_{j,(0)} \delta^{1/2}(R(a_{j,(1)})) \sigma_{-i/2}^A(b_j^*))= \sum_j \Lambda_A(a_{j,(0)} \sigma_{-i/2}^A(b_j^*)) \delta^{-1/2}(a_{j,(1)}).
\end{align*}
Since $\sum_j \Lambda_A(a_{j,(0)} \sigma_{-i/2}^A(b_j^*))\delta^{-1/2}(a_{j,(1)}) \in \mathbb{C} \Lambda_A(1)$ (see \eqref{reference}), it follows (take inner products with respect to the vector $\Lambda_A(1)$) that
\begin{align*}\sum_j \Lambda_A(a_{j,(0)} \sigma_{-i/2}^A(b_j^*))\delta^{-1/2}(a_{j,(1)}) &= \sum_j \Phi_A(a_{j,(0)}\sigma_{-i/2}^A(b_j^*))\delta^{-1/2}(a_{j,(1)})\Lambda_A(1).
\end{align*}
Consequently,
\begin{align*}
    \theta_{\square}^{L^2(A)}(\sum_j a_j\otimes \overline{b_j})\Lambda_A(1) &= \sum_{j} \Phi_A(b_j^* \sigma_{-i/2}^A(a_{j,(0)})) \delta^{-1/2}(a_{j,(1)}) \Lambda_A(1)= \sum_j \Phi_A(b_j^* \kappa_A(a_j))\Lambda_A(1),
\end{align*}
so we deduce that $$\epsilon_A(\sum_j a_j\otimes \overline{b_j})= \sum_j \Phi_A(b_j^*\kappa_A(a_j)).$$
(2) If $a,b,c\in \mathcal{A}$, it is easily verified that
\begin{equation}\label{eq}
    \pi_S(a)\delta^{1/2}(b_{(1)}^*) \rho_S(b_{(0)}^*) (\Lambda_A(c)\otimes \Lambda(1)) = \Lambda_A(a_{(0)}c\sigma_{-i/2}^A(b_{(0)}^*)) \otimes \Lambda(a_{(1)} \sigma_{-i/2}(b_{(1)}^*)) \delta^{1/2}(b_{(2)}^*).
\end{equation}
If $\sum_j a_j\otimes \overline{b_j}\in \mathcal{A} \overset{\G}{\square}\overline{\mathcal{A}}$, \eqref{cotensor} implies that also
\begin{align}\label{cotensor2}
    \sum_j a_{j,(0)}\otimes a_{j,(1)} \otimes R(a_{j,(2)})\otimes b_{j,(0)}^*\otimes b_{j,(1)}^* = \sum_j a_{j,(0)}\otimes a_{j,(1)}\otimes b_{j,(2)}^*\otimes b_{j,(0)}^*\otimes b_{j,(1)}^*,
\end{align}
and thus by combining \eqref{eq} and \eqref{cotensor2}, we find
\begin{align*}
    \theta_{\square}^{S_A^\G}(\sum_j a_j\otimes \overline{b_j})(\Lambda_A(c)\otimes \Lambda(1))&= \sum_j \Lambda_A(a_{j,(0)}c\sigma_{-i/2}^A(b_{j,(0)}^*))\otimes \delta^{1/2}(R(a_{j,(2)})) \Lambda(a_{j,(1)} \sigma_{-i/2}(b_{j,(1)}^*))\\
    &= \sum_j \Lambda_A(a_{j,(0)}c\sigma_{-i/2}^A(b_{j,(0)}^*))\otimes \delta^{-1/2}(a_{j,(2)}) \Lambda(a_{j,(1)}\sigma_{-i/2}(b_{j,(1)}^*)).
\end{align*}

    Therefore,
$$\theta_S(\sum_{j=1}^n a_j\otimes \overline{b_j})=\sum_{j=1}^n \pi_A(a_{j,(0)}) \rho_A(b_{j,(0)}^*) \Phi_\G(b_{j,(1)}^* \kappa_{L^\infty(\G)}(a_{j,(1)})).$$
\end{proof}

Fix a compact quantun group $\mathbb{H}$ and consider the compact quantum group $\G := \mathbb{H}^{\op}\times \mathbb{H}$. Concretely,
$$L^\infty(\G) = L^\infty(\mathbb{H})\bar{\otimes}L^\infty(\mathbb{H}), \quad \Delta_\G = (\id \otimes \varsigma \otimes \id) \circ (\Delta_{\mathbb{H}}^{\op}\otimes \Delta_{\mathbb{H}}),$$
with $\varsigma$ the flip. Then with $\Delta^{(2)} = (\Delta\otimes \id)\Delta$, it is easily verified that
$$\alpha: L^\infty(\mathbb{H})\to L^\infty(\mathbb{H})\bar{\otimes} L^\infty(\G) = L^\infty(\mathbb{H})^{\bar{\otimes} 3}: x \mapsto \Delta_{\mathbb{H}}^{(2)}(x)_{213}$$
defines an ergodic right action $L^\infty(\mathbb{H})\curvearrowleft \G$. On the algebraic regular elements, the coaction restricts to $$\alpha: \mathcal{O}(\mathbb{H})\to \mathcal{O}(\mathbb{H})\odot \mathcal{O}(\G): x\mapsto x_{(2)}\otimes (x_{(1)}\otimes x_{(3)}).$$
To make the connection with the previous notations, write
$$A:= L^\infty(\mathbb{H}), \quad \mathcal{A} := \mathcal{O}(\mathbb{H}).$$
  The associated left action $\G\curvearrowright \overline{\mathcal{O}(\mathbb{H})}$ is given by
$$\alpha_c: \overline{\mathcal{O}(\mathbb{H})}\to \mathcal{O}(\mathbb{G})\odot \overline{\mathcal{O}(\mathbb{H})}: \overline{x} \mapsto R(x_{(1)}^*)\otimes R(x_{(3)}^*)\otimes \overline{x_{(2)}}.$$
Given $\pi\in \operatorname{Irr}(\mathbb{H})$, we recall the notation $n_\pi = \dim(\pi) = \dim(\mathcal{H}_\pi)$. Consider then the character
$$\chi(\pi):= \sum_{i=1}^{n_\pi} U_\pi(e_i^\pi, e_i^\pi)\in \mathcal{O}(\mathbb{H}).$$
The elements $\{\chi(\pi)\}_{\pi\in \operatorname{Irr}(\mathbb{H})}$ are linearly independent. We write 
$$\operatorname{Fus}[\mathbb{H}]= \operatorname{span}\{\chi(\pi): \pi\in \Irr(\mathbb{H})\}= \{x\in \mathcal{O}(\mathbb{H}): \Delta(x)= \Delta^{\op}(x)\}$$
for the \emph{fusion algebra} of $\mathbb{H}$. It is a $*$-subalgebra of $\mathcal{O}(\mathbb{H})$. 

\begin{Prop}\label{Prop}
    The map $$\operatorname{Fus}[\mathbb{H}]\cong \mathcal{O}(\mathbb{H}) \overset{\G}{\square} \overline{\mathcal{O}(\mathbb{H})}: \chi(\pi) \mapsto \sum_{j,k=1}^{n_\pi} U_\pi(e_j^\pi, e_k^\pi)\otimes \overline{U_{\overline{\pi}}(\overline{e_j^\pi}, \overline{e_k^\pi})^*}$$
    is an isomorphism of unital $*$-algebras.
\end{Prop}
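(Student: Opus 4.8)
The plan is to realize the cotensor product as a space of $\G$-invariants and to compute it through the Peter--Weyl decomposition of $\mathcal{O}(\mathbb{H})$. First I would rewrite the defining equalizer condition $(\alpha\odot\id)z=(\id\odot\alpha_c)z$ as invariance of $z$ under a single right $\G$-coaction $\gamma$ on $\mathcal{O}(\mathbb{H})\odot\overline{\mathcal{O}(\mathbb{H})}$, obtained by transporting the left coaction $\alpha_c$ into a right one via the antipode $S$ of $\mathcal{O}(\G)$. This identifies $\mathcal{O}(\mathbb{H})\overset{\G}{\square}\overline{\mathcal{O}(\mathbb{H})}$ with the invariants $(\mathcal{O}(\mathbb{H})\odot\overline{\mathcal{O}(\mathbb{H})})^{\G}$, which is the object I will decompose.

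Writing $u^\pi_{jk}=U_\pi(e_j^\pi,e_k^\pi)$ and using $\alpha(u^\pi_{jk})=\sum_{l,m}u^\pi_{lm}\otimes(u^\pi_{jl}\otimes u^\pi_{mk})$, one sees that each coefficient space $C(\pi):=\operatorname{span}\{u^\pi_{jk}\}$ is a $\G$-subcomodule isomorphic to the irreducible external tensor product $\overline{\pi}\boxtimes\pi$ of $\G=\mathbb{H}^{\op}\times\mathbb{H}$, and that these are pairwise inequivalent for distinct $\pi\in\Irr(\mathbb{H})$. Since $\alpha_c$ preserves the conjugate spaces $\overline{C(\sigma)}$, the grading $\mathcal{O}(\mathbb{H})\odot\overline{\mathcal{O}(\mathbb{H})}=\bigoplus_{\pi,\sigma}C(\pi)\odot\overline{C(\sigma)}$ is respected by $\gamma$, so the invariants split as $\bigoplus_{\pi,\sigma}(C(\pi)\odot\overline{C(\sigma)})^{\G}$. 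The conjugate comodule $\overline{C(\sigma)}$ is the dual of the $\G$-irreducible $C(\sigma)$ (this is where the precise form of $\alpha_c$, with its $R$ and $*$, enters), so by Schur's lemma each summand $(C(\pi)\odot\overline{C(\sigma)})^{\G}\cong\Hom_\G(C(\sigma),C(\pi))$ is $\delta_{\pi,\sigma}$-dimensional. Hence $\dim\bigl(\mathcal{O}(\mathbb{H})\overset{\G}{\square}\overline{\mathcal{O}(\mathbb{H})}\bigr)=|\Irr(\mathbb{H})|$, and the nonzero invariant in $C(\pi)\odot\overline{C(\pi)}$ is unique up to a scalar.

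Next I would check that the proposed image $z_\pi:=\sum_{j,k}u^\pi_{jk}\otimes\overline{U_{\overline{\pi}}(\overline{e_j^\pi},\overline{e_k^\pi})^*}$ is exactly this invariant. Using $U_{\overline{\pi}}(\overline{e_j^\pi},\overline{e_k^\pi})=R(u^\pi_{kj})$ (from $R(U_\pi(\xi,\eta))=U_{\overline{\pi}}(\overline{\eta},\overline{\xi})$) one confirms that $z_\pi$ lies in $C(\pi)\odot\overline{C(\pi)}$, and then verifies the equalizer identity $(\alpha\odot\id)z_\pi=(\id\odot\alpha_c)z_\pi$ directly from the comultiplication of matrix coefficients, the compatibility of $\Delta$ with $R$ and $S$, and the unitarity relations $\sum_l u^\pi_{jl}(u^\pi_{kl})^*=\delta_{jk}$. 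Since $z_\pi\neq 0$, it spans the one-dimensional summand. As the $\chi(\pi)$ form a basis of $\operatorname{Fus}[\mathbb{H}]$ and the $z_\pi$ lie in distinct summands of the invariants, the map $\chi(\pi)\mapsto z_\pi$ is a well-defined linear isomorphism onto $\mathcal{O}(\mathbb{H})\overset{\G}{\square}\overline{\mathcal{O}(\mathbb{H})}$.

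It then remains to match the $*$-algebra structures. On $\operatorname{Fus}[\mathbb{H}]$ we have $\chi(\pi)^*=\chi(\overline{\pi})$ and $\chi(\pi)\chi(\sigma)=\chi(\pi\otimes\sigma)=\sum_\tau\dim\Hom(\tau,\pi\otimes\sigma)\,\chi(\tau)$, so I would verify the corresponding relations $z_\pi^*=z_{\overline{\pi}}$ and $z_\pi z_\sigma=\sum_\tau\dim\Hom(\tau,\pi\otimes\sigma)\,z_\tau$ on the cotensor side, with unitality corresponding to the trivial representation. The $*$-identity follows from the conjugation formulas for matrix coefficients. The multiplicativity is the main obstacle: it requires expanding the product $u^\pi_{jk}u^\sigma_{lm}$ of matrix coefficients through the Clebsch--Gordan decomposition $\pi\otimes\sigma\cong\bigoplus_\tau\mathcal{H}_\tau$ and carefully tracking the isometric intertwiners together with the $\overline{(\cdot)}$-factors, so that the multiplicities $\dim\Hom(\tau,\pi\otimes\sigma)$ emerge with the correct normalization. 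Since both algebras are spanned by the matched bases, confirming these structure constants completes the proof that the map is an isomorphism of unital $*$-algebras.
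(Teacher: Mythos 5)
Your strategy is sound and would yield a complete proof, but it takes a genuinely different route from the paper's. You compute the cotensor product representation-theoretically: convert the equalizer condition into invariance under a codiagonal coaction, decompose $\mathcal{O}(\mathbb{H})\odot\overline{\mathcal{O}(\mathbb{H})}$ into Peter--Weyl blocks $C(\pi)\odot\overline{C(\sigma)}$, apply Schur's lemma to see that each block contributes a $\delta_{\pi,\sigma}$-dimensional space of invariants spanned by $z_\pi$, and then verify the structure constants $z_\pi^*=z_{\overline{\pi}}$ and $z_\pi z_\sigma=\sum_\tau \dim\Hom(\tau,\pi\otimes\sigma)\,z_\tau$ by hand via Clebsch--Gordan. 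The paper instead transports $\overline{\mathcal{O}(\mathbb{H})}$ onto $\mathcal{O}(\mathbb{H})$ through the $*$-isomorphism $\overline{x}\mapsto R(x^*)$ and then shows by a short counit argument that the resulting cotensor product is exactly $\Delta_{\mathbb{H}}(\operatorname{Fus}[\mathbb{H}])$, with inverse $\id\odot\varepsilon$; since $\Delta_{\mathbb{H}}$ restricted to $\operatorname{Fus}[\mathbb{H}]$ is a unital $*$-homomorphism, the multiplicativity and $*$-compatibility that you flag as ``the main obstacle'' come for free. Your approach buys a structural explanation (the cotensor product as $\bigoplus_{\pi}\Hom_{\G}(C(\pi),C(\pi))$, which makes the $|\Irr(\mathbb{H})|$-dimensionality transparent), at the cost of a genuinely laborious final computation that your write-up only sketches. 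If you want to keep your decomposition but skip the Clebsch--Gordan bookkeeping, observe that under $\overline{x}\mapsto R(x^*)$ your element $z_\pi$ is carried precisely to $\Delta_{\mathbb{H}}(\chi(\pi))$ (using $U_{\overline{\pi}}(\overline{e_j^\pi},\overline{e_k^\pi})=R(U_\pi(e_k^\pi,e_j^\pi))$, as you note), so the map $\chi(\pi)\mapsto z_\pi$ is the composite of $\Delta_{\mathbb{H}}|_{\operatorname{Fus}[\mathbb{H}]}$ with a $*$-isomorphism and the structure constants need no separate verification.
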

\begin{proof} 
We have a $*$-isomorphism
$$\overline{\mathcal{O}(\mathbb{H})} \cong \mathcal{O}(\mathbb{H}): \overline{x}\mapsto R(x^*),$$
and with respect to this isomorphism, we get an induced action $\G\curvearrowright \mathcal{O}(\mathbb{H})$ given by
$$\beta: \mathcal{O}(\mathbb{H})\to \mathcal{O}(\mathbb{G})\odot \mathcal{O}(\mathbb{H}): x \mapsto (x_{(3)}\otimes x_{(1)})\otimes x_{(2)}.$$
We then obtain an induced isomorphism
$ \mathcal{O}(\mathbb{H}) \overset{\G}{\square} \overline{\mathcal{O}(\mathbb{H})}\cong  \mathcal{O}(\mathbb{H})  \overset{\G}{\square} \mathcal{O}(\mathbb{H}).$

We claim that $\Delta_\mathbb{H}(\operatorname{Fus}[\mathbb{H}])\subseteq  \mathcal{O}(\mathbb{H}) \overset{\G}{\square}\mathcal{O}(\mathbb{H})$. Indeed, if $z\in \operatorname{Fus}[\mathbb{H}]$, then 
\begin{align*}
    &(\alpha \odot \id)\Delta_{\Hh}(z)=z_{(2)}\otimes z_{(1)}\otimes z_{(3)}\otimes z_{(4)},\\
    &(\id \odot \beta)\Delta_{\Hh}(z)= z_{(1)}\otimes z_{(4)}\otimes z_{(2)}\otimes z_{(3)}. 
\end{align*}
Since $\Delta_{\Hh}(z) = \Delta_{\Hh}^{\op}(z)$, these expressions are equal. Conversely, let $z\in \mathcal{O}(\mathbb{H}) \overset{\G}{\square} \mathcal{O}(\mathbb{H})$, i.e. 
$$(\alpha \odot \id)(z) = (\id \odot \beta)(z).$$
Applying $\id \odot \varepsilon \odot \id \odot \id$ to this expression, we get $(\Delta_{\Hh}\odot \id)(z) = (\id \odot \Delta_\Hh)(z)$. On the other hand, applying $\id \odot \id \odot \varepsilon \odot \id$ to the same expression yields $(\Delta_{\Hh}^{\op}\odot \id)(z) = (\id \odot \Delta_\Hh^{\op})(z)$. Put $w:= (\id \odot \varepsilon)(z)\in \mathcal{O}(\Hh)$. We see that $z=\Delta_\Hh(w) = \Delta_{\Hh}^{\op}(w)$, and thus $w \in \operatorname{Fus}[\Hh]$. It follows that the maps
$$
\begin{tikzcd}
{\operatorname{Fus}[\Hh]} \arrow[rrr, "\Delta_\Hh", shift left] &  &  & \mathcal{O}(\mathbb{H}) \overset{\G}{\square} \mathcal{O}(\mathbb{H}) \arrow[lll, "\id \odot \varepsilon_\Hh", shift left]
\end{tikzcd}$$
are inverse to each other. Composing the isomorphisms
$$\operatorname{Fus}[\Hh]\cong  \mathcal{O}(\mathbb{H}) \overset{\G}{\square}\mathcal{O}(\mathbb{H}) \cong \mathcal{O}(\mathbb{H}) \overset{\G}{\square} \overline{\mathcal{O}(\mathbb{H})},$$
we obtain the map
$$\chi(\pi)\mapsto \sum_{j,k=1}^{n_\pi} U_\pi(e_j^\pi, e_k^\pi)\otimes \overline{R(U_\pi(e_k^\pi, e_j^\pi)^*)}= \sum_{j,k=1}^{n_\pi} U_\pi(e_j^\pi, e_k^\pi)\otimes \overline{U_{\overline{\pi}}(\overline{e_j^\pi}, \overline{e_k^\pi})^*}.$$
\end{proof}

\begin{Prop} \label{semi}\begin{enumerate}
\item The $*$-representation $\epsilon_A: \operatorname{Fus}[\mathbb{H}]\cong \mathcal{O}(\mathbb{H}) \overset{\G}{\square} \overline{\mathcal{O}(\mathbb{H})}\to \mathbb{C}$ is given by
$$\epsilon_A(\chi(\pi))= \dim_q(\pi), \quad \pi\in \operatorname{Irr}(\mathbb{H}).$$
    \item The $*$-representation $\theta_S: \operatorname{Fus}[\mathbb{H}]\cong  \mathcal{O}(\mathbb{H}) \overset{\G}{\square} \overline{\mathcal{O}(\mathbb{H})}\to B(L^2(\mathbb{H}))$ coincides with the restriction of the $*$-representation
$$\mathcal{O}(\mathbb{H})\to B(L^2(\mathbb{H})): x \mapsto \pi_{\mathbb{H}}(x_{(1)}) \rho_{\mathbb{H}}(R(x_{(2)})).$$
\end{enumerate}
\end{Prop}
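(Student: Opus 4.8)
The plan is to prove both parts by direct computation: substitute the explicit image of $\chi(\pi)$ under the isomorphism of Proposition \ref{Prop} into the closed formulas for $\epsilon_A$ and $\theta_S$ given by Proposition \ref{concrete}, and simplify using the Peter--Weyl orthogonality relations. Before starting I would record the relevant modular data. Since $\Phi_A = (\id\otimes\Phi_\G)\alpha = \Phi_{\mathbb{H}}$ by invariance of the Haar state, the standard space $L^2(A)$ is just $L^2(\mathbb{H})$ and $\sigma^A = \sigma^{\mathbb{H}}$. I would also note that, with the convention $\mathbb{H}^{\op} = (L^\infty(\mathbb{H}),\Delta_{\mathbb{H}}^{\op})$, the first tensor leg of $\G$ carries the \emph{same} algebra, Haar state, modular group and Woronowicz character as $\mathbb{H}$ (only the coproduct is flipped), so that $\delta_\G^{z} = \delta^{z}\otimes\delta^{z}$ and $\sigma_t^\G = \sigma_t^{\mathbb{H}}\otimes\sigma_t^{\mathbb{H}}$ as functionals/automorphisms on $\mathcal{O}(\mathbb{H})\odot\mathcal{O}(\mathbb{H})$. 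Throughout I would fix the orthonormal basis $\{e_i^\pi\}$ to be an eigenbasis of the positive operator $\delta$ on $\mathcal{H}_\pi$, say $\delta e_i = \lambda_i e_i$; this turns every occurrence of $\delta^{z}$ into a scalar $\lambda_i^{z}$ and tames the index bookkeeping.

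For part (1) I would write $\chi(\pi) = \sum_{j,k} a_{jk}\otimes\overline{b_{jk}}$ with $a_{jk} = U_\pi(e_j,e_k)$ and $b_{jk} = U_{\overline{\pi}}(\overline{e_j},\overline{e_k})^*$, and use the duality relation between $U_{\overline{\pi}}$ and $U_\pi$ to rewrite $b_{jk}^* = U_\pi(\delta^{-1/4}e_j,\delta^{1/4}e_k)^*$. Plugging into $\epsilon_A(\sum_j a_j\otimes\overline{b_j}) = \sum_j \Phi_A(b_j^*\kappa_A(a_j))$ from Proposition \ref{concrete}(1), I would compute $\kappa_A(a_{jk})$ by applying $\sigma_{-i/2}$ and $\delta_\G^{-1/2}$ to matrix coefficients via \eqref{EqPresentSigma}; in the eigenbasis this produces an explicit scalar times $U_\pi(e_j,e_k)$. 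The quantity $\Phi(b_{jk}^*\kappa_A(a_{jk}))$ is then evaluated by the orthogonality relation \eqref{EqPW2}, and after summing over $j,k$ the answer collapses to $\frac{1}{\dim_q(\pi)}(\sum_j\lambda_j^{-1/2})(\sum_k\lambda_k^{-1/2})$. Recognizing $\sum_i\lambda_i^{-1/2} = \Tr(\delta^{-1/2}) = \|t_\pi\|^2 = \dim_q(\pi)$ yields $\epsilon_A(\chi(\pi)) = \dim_q(\pi)$.

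For part (2) I would proceed in the same spirit, now keeping the operators $\pi_A,\rho_A$ on $L^2(\mathbb{H})$. Starting from Proposition \ref{concrete}(2), I would expand the coaction Sweedlers of $a_{jk}$ and $b_{jk}$ (each is a double comultiplication of a matrix coefficient) and compute $\kappa_{L^\infty(\G)}$ on the $\mathcal{O}(\G)$-leg $a_{jk,(1)}$. Here the flipped coproduct in $\Delta_\G$ is essential: it makes $\kappa_{L^\infty(\G)}(u\otimes v)$ split as $[\sigma_{-i/2}(u_{(2)})\delta^{-1/2}(u_{(1)})]\otimes[\sigma_{-i/2}(v_{(1)})\delta^{-1/2}(v_{(2)})]$, i.e.\ a ``left'' version on the first ($\mathbb{H}^{\op}$) leg and the usual ``right'' version on the second. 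Applying $\Phi_\G = \Phi\otimes\Phi$ to $b_{jk,(1)}^*\,\kappa_{L^\infty(\G)}(a_{jk,(1)})$ and invoking \eqref{EqPW1}--\eqref{EqPW2} on each leg collapses most summation indices (forcing, in my notation, the internal indices to match those of $a_{jk,(0)}$) and leaves the operator $\pi_A(U_\pi(e_l,e_m))\rho_A(U_\pi(e_l,e_m)^*)$ weighted by an explicit $\delta$-scalar. Summing the freed indices $j,k$ contributes two factors of $\dim_q(\pi)$ which cancel the $\dim_q(\pi)^2$ from the two orthogonality relations, leaving exactly $\sum_{i,l}\lambda_i^{-1/4}\lambda_l^{1/4}\,\pi_{\mathbb{H}}(U_\pi(e_i,e_l))\rho_{\mathbb{H}}(U_\pi(e_i,e_l)^*)$. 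I would then identify this with $\pi_{\mathbb{H}}(\chi(\pi)_{(1)})\rho_{\mathbb{H}}(R(\chi(\pi)_{(2)}))$ using $\Delta(\chi(\pi)) = \sum_{i,l}U_\pi(e_i,e_l)\otimes U_\pi(e_l,e_i)$ together with $R(U_\pi(e_l,e_i)) = U_\pi(\delta^{-1/4}e_i,\delta^{1/4}e_l)^* = \lambda_i^{-1/4}\lambda_l^{1/4}U_\pi(e_i,e_l)^*$.

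The main obstacle is purely the bookkeeping of the Woronowicz-character powers and, above all, the asymmetric role of the two legs of $\G = \mathbb{H}^{\op}\times\mathbb{H}$. The delicate point I would flag explicitly is that $\mathbb{H}^{\op}$ shares its algebra, Haar state, modular group and character $\delta$ with $\mathbb{H}$, so that \emph{no} opposite product enters the evaluation of $\Phi_\G$; only the coproduct is reversed, and this reversal is felt solely through $\Delta_\G$ inside $\kappa_{L^\infty(\G)}$, producing the left/right asymmetry above. Getting this distinction right is exactly what makes the two orthogonality contractions conspire to cancel the quantum-dimension factors and reproduce the unitary antipode $R$ in the final formula; working in an eigenbasis of $\delta$ keeps the intermediate scalars manageable.
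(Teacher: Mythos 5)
Your proposal is correct and follows essentially the same route as the paper: substitute the image of $\chi(\pi)$ under the isomorphism of Proposition \ref{Prop} into the formulas of Proposition \ref{concrete}, compute $\kappa_A$ and $\kappa_{L^\infty(\G)}$ on matrix coefficients via \eqref{EqFormSigmaScaling} and $\delta_\G = \delta\otimes\delta$, contract with the Peter--Weyl relations, and identify the result with $\dim_q(\pi)$ resp.\ $\pi_{\mathbb{H}}(\chi(\pi)_{(1)})\rho_{\mathbb{H}}(R(\chi(\pi)_{(2)}))$. Working in an eigenbasis of $\delta$ is a harmless cosmetic simplification, and your flagged point about the $\mathbb{H}^{\op}$-leg (same Haar state, modular group and character, only the coproduct flipped) is exactly the observation the paper relies on.
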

\begin{proof}
 (1) We note that, e.g.\ by the formula \ref{DefWorChar}, we have the equality of Woronowicz characters $\delta_{\Hh^{\opp}} = \delta_{\Hh}$, and so
$$\kappa_{A}(x) = \sigma_{-i/2}(x_{(2)})\delta^{-1/2}(x_{(1)}) \delta^{-1/2}(x_{(3)}) = \delta^{-3/4}(x_{(1)})x_{(2)} \delta^{-3/4}(x_{(3)}), \quad x \in \mathcal{O}(\mathbb{H}).$$
A calculation then shows that $$\kappa_{A}(U_\pi(\xi, \eta))=  U_\pi(\delta^{-3/4}\xi, \delta^{-3/4}\eta), \quad \xi, \eta \in \mathcal{H}_\pi.$$
Therefore, the $*$-character
$$\epsilon_A: \operatorname{Fus}[\mathbb{H}]\cong \mathcal{O}(\mathbb{H}) \overset{\G}{\square} \overline{\mathcal{O}(\mathbb{H})} \to \mathbb{C}$$
is given by
\begin{align*}
    \epsilon_A(\chi(\pi)) &= \sum_{j,k} \Phi_{\mathbb{H}}(U_{\overline{\pi}}(\overline{e_j^\pi}, \overline{e_k^\pi})U_{\pi}(\delta^{-3/4}e_j^{\pi},\delta^{-3/4}e_k^{\pi}))\\
    &= \sum_{j,k} \Phi_{\mathbb{H}}(U_\pi(\delta^{-1/4}e_j^\pi, \delta^{1/4}e_k^\pi)^* U_{\pi}(\delta^{-3/4}e_j^{\pi},\delta^{-3/4}e_k^{\pi}))\\
    &= \frac{1}{\dim_q(\pi)}\sum_{j,k} \langle e_j^\pi, \delta^{-1/2}e_j^\pi\rangle \langle e_k^\pi, \delta^{-1/2} e_k^\pi\rangle = \dim_q(\pi).
\end{align*}

(2) Note that 
\begin{align*}
    &\kappa_{L^\infty(\G)}(x\otimes y) = \sigma_{-i/2}(x_{(2)})\otimes \sigma_{-i/2}(y_{(1)}) (\delta^{-1/2}(x_{(1)})\delta^{-1/2}(y_{(2)})), \quad x,y \in \mathcal{O}(\G).
\end{align*}
Therefore, if $\pi \in \operatorname{Irr}(\mathbb{H})$, we see that
$$\kappa_{L^\infty(\G)}(U_\pi(\xi, \eta)\otimes U_\pi(\xi', \eta')) = U_\pi(\delta^{-3/4}\xi, \delta^{-1/4}\eta) \otimes U_\pi(\delta^{-1/4}\xi', \delta^{-3/4}\eta'), \quad \xi, \xi', \eta, \eta'\in \mathcal{H}_\pi.$$
Consequently, by the orthogonality relations,
\begin{align*}
    &(\Phi_{\mathbb{H}}\odot \Phi_{\mathbb{H}})((U_\pi(\delta^{-1/4}e_j^\pi, e_s^\pi)^*\otimes U_\pi(e_t^\pi, \delta^{1/4}e_k^\pi)^*)\kappa_{L^\infty(\G)}(U_\pi(e_j^\pi, e_l^\pi)\otimes U_\pi(e_m^\pi, e_k^\pi)))\\
    &= \frac{1}{\dim_q(\pi)^2} \langle e_j^\pi, \delta^{-1/2} e_j^\pi\rangle \langle e_s^\pi, \delta^{-1/4} e_l^\pi\rangle \langle e_m^\pi, \delta^{1/4}e_t^\pi\rangle \langle e_k^\pi, \delta^{-1/2}e_k^\pi\rangle.
\end{align*}
We therefore deduce that
\begin{align*}
    \theta_S(\chi(\pi)) &\cong \theta_S\left(\sum_{j,k=1}^{n_\pi} U_\pi(e_j^\pi, e_k^\pi)\otimes \overline{U_\pi(\delta^{-1/4}e_j^\pi, \delta^{1/4}e_k^\pi)}\right)\\
    &= \sum_{l,m,s,t}\pi_{\mathbb{H}}(U_\pi(e_l^\pi, e_m^\pi)) \rho_{\mathbb{H}}(U_\pi(e_s^\pi, e_t^\pi)^*) \langle e_s^\pi, \delta^{-1/4} e_l^\pi\rangle \langle e_m^\pi, \delta^{1/4}e_t^\pi\rangle\\
    &= \sum_{l,m} \pi_{\mathbb{H}}(U_\pi(e_l^\pi, e_m^\pi)) \rho_{\mathbb{H}}(U_\pi(\delta^{-1/4}e_l^\pi, \delta^{1/4}e_m^\pi)^*)\\
    &= \sum_{l,m}\pi_{\mathbb{H}}(U_\pi(e_l^\pi, e_m^\pi)) \rho_{\mathbb{H}}(R(U_\pi(e_m^\pi, e_l^\pi))).
\end{align*}
\end{proof}

\begin{Theorem}\label{TheoMain2} 
    The action $\alpha: L^\infty(\mathbb{H})\curvearrowleft \G$ is (strongly) $\G$-amenable if and only if $\mathbb{H}$ is of Kac type.
\end{Theorem}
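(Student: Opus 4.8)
The plan is to read off the theorem from the weak containment \eqref{weak}. By the first-strategy reduction, $\alpha$ is strongly $\G$-amenable if and only if $\epsilon_A\preccurlyeq\theta_S$ as $*$-representations of $\operatorname{Fus}[\mathbb{H}]\cong\mathcal{O}(\mathbb{H})\overset{\G}{\square}\overline{\mathcal{O}(\mathbb{H})}$, and since $L^\infty(\mathbb{H})$ is $\sigma$-finite, Theorem \ref{application} identifies strong amenability with plain amenability; so it suffices to decide when this weak containment holds. As $\epsilon_A$ is a $*$-character, $\epsilon_A\preccurlyeq\theta_S$ is equivalent to the scalar estimate $|\epsilon_A(x)|\le\|\theta_S(x)\|$ for all $x$, and Proposition \ref{semi}(1) gives $\epsilon_A(\chi(\pi))=\dim_q(\pi)$. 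Thus everything reduces to comparing $\dim_q(\pi)$ with the operator norm $\|\theta_S(\chi(\pi))\|$.

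For the direction ``$\mathbb{H}$ of Kac type $\Rightarrow$ amenable'', I would invoke the general observation recorded at the start of this section: if $\mathbb{H}$ is of Kac type then so is $\G=\mathbb{H}^{\op}\times\mathbb{H}$ (its Haar state is the product of two tracial Haar states), and every action of a Kac-type compact quantum group is amenable. Combined with Theorem \ref{application}, this yields strong amenability as well, giving one implication for free.

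The substance is the converse, and the key step I would establish is the norm bound
\[
\|\theta_S(\chi(\pi))\|\le \dim(\pi)=n_\pi,\qquad \pi\in\operatorname{Irr}(\mathbb{H}).
\]
Starting from the explicit formula $\theta_S(\chi(\pi))=\sum_{l,m}\pi_{\mathbb{H}}(U_\pi(e_l^\pi,e_m^\pi))\,\rho_{\mathbb{H}}(R(U_\pi(e_m^\pi,e_l^\pi)))$ of Proposition \ref{semi}(2), I would set $\mathcal{L}=(\id\otimes\pi_{\mathbb{H}})(U_\pi)$ and $\mathcal{R}=(\id\otimes\rho_{\mathbb{H}}R)(U_\pi)$ in $B(\mathcal{H}_\pi)\bar{\otimes}B(L^2(\mathbb{H}))$, whose $(l,m)$- resp.\ $(m,l)$-entries are the two factors above. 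Both are unitaries: $\pi_{\mathbb{H}}$ is a $*$-homomorphism, and $\rho_{\mathbb{H}}\circ R$ is a $*$-homomorphism (the anti-$*$-representation $\rho_{\mathbb{H}}$ composed with the $*$-anti-automorphism $R$), and applying a $*$-homomorphism to the second leg of a unitary corepresentation produces a unitary. A short bookkeeping with matrix units then identifies $\theta_S(\chi(\pi))=(\Tr_{\mathcal{H}_\pi}\otimes\id)(\mathcal{L}\mathcal{R})$, the partial trace being unnormalised. Writing this as $\sum_k V_k^*(\mathcal{L}\mathcal{R})V_k$ with the isometries $V_k\colon L^2(\mathbb{H})\to\mathcal{H}_\pi\otimes L^2(\mathbb{H})$, $\zeta\mapsto e_k^\pi\otimes\zeta$, and using $\|\mathcal{L}\mathcal{R}\|=1$, gives $\|\theta_S(\chi(\pi))\|\le n_\pi$ immediately.

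With this bound in hand I would conclude as follows. Recall that $\dim_q(\pi)=\Tr(\pi(\delta))=\Tr(\pi(\delta^{-1}))$ via the identification \eqref{identification}, so if $\lambda_1,\dots,\lambda_{n_\pi}>0$ are the eigenvalues of the positive operator $\pi(\delta)$, then $\dim_q(\pi)=\tfrac12\sum_i(\lambda_i+\lambda_i^{-1})\ge n_\pi$, with equality if and only if $\pi(\delta)=1$. If $\mathbb{H}$ is not of Kac type, then $\delta\neq 1$ in $\msU\cong\prod_\pi B(\mathcal{H}_\pi)$, so some $\pi$ has $\pi(\delta)\neq 1$ and hence $\dim_q(\pi)>n_\pi\ge\|\theta_S(\chi(\pi))\|$. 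Taking $x=\chi(\pi)$ violates $|\epsilon_A(x)|\le\|\theta_S(x)\|$, so $\epsilon_A\not\preccurlyeq\theta_S$ and $\alpha$ is not strongly amenable; by Theorem \ref{application} it is not amenable either. The one genuinely new idea here is the rewriting of $\theta_S(\chi(\pi))$ as an unnormalised partial trace of the unitary $\mathcal{L}\mathcal{R}$, which is exactly where I expect the difficulty to sit; once it is spotted, the bound $n_\pi$ and the rest of the argument are routine.
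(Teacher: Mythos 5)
Your proof is correct and follows essentially the same route as the paper: reduce via Proposition \ref{weak containment} and Theorem \ref{application} to the scalar estimate $|\epsilon_A(\chi(\pi))|\le\|\theta_S(\chi(\pi))\|$, use $\epsilon_A(\chi(\pi))=\dim_q(\pi)$, bound $\|\theta_S(\chi(\pi))\|$ by $n_\pi$, and invoke $\dim_q(\pi)\ge n_\pi$ with equality iff Kac. The only step where you diverge is the norm bound: the paper observes that $\theta_S$ restricts from a $*$-representation of all of $\mcO(\Hh)$ and uses $\|\chi(\pi)\|_u\le n_\pi$, whereas you prove $\|\theta_S(\chi(\pi))\|\le n_\pi$ directly by exhibiting it as the unnormalised partial trace $(\Tr\otimes\id)(\mcL\mcR)$ of a unitary; both arguments are valid and amount to the same ``sum of $n_\pi$ entries of a unitary'' estimate. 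One small convention slip: in this paper's normalisation $\dim_q(\pi)=\Tr(\pi(\delta^{1/2}))=\Tr(\pi(\delta^{-1/2}))$ rather than $\Tr(\pi(\delta))$ (as one reads off from the Peter--Weyl relations \eqref{EqPW1}--\eqref{EqPW2}), but this does not affect the conclusion $\dim_q(\pi)\ge n_\pi$ with equality iff $\pi(\delta)=1$.
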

\begin{proof} Suppose that the action $\alpha: L^\infty(\mathbb{H})\curvearrowleft \G$ is amenable. Then the $*$-representation
$\theta_S: C_u^*(\operatorname{Fus}[\mathbb{H}])\to B(L^2(\mathbb{H}))$
weakly contains the $*$-representation
$\epsilon_A: C_u^*(\operatorname{Fus}[\mathbb{H}])\to \mathbb{C}$
so in particular
\begin{equation}\label{counitestimate}
    |\epsilon_A(x)|\le \|\theta_S(x)\|, \quad x \in \operatorname{Fus}[\mathbb{H}].
\end{equation}
By Proposition \ref{semi}, the $*$-representation
    $\theta_S: \operatorname{Fus}[\mathbb{H}]\to B(L^2(\mathbb{H}))$
    arises as the restriction of a $*$-representation $\mathcal{O}(\mathbb{H})\to B(L^2(\mathbb{H}))$. Therefore, \eqref{counitestimate} shows that 
    $$|\epsilon_A(x)|\le \|x\|_u, \quad x \in \operatorname{Fus}[\mathbb{H}]$$
    with $\|\cdot\|_u$ the universal norm on $\mathcal{O}(\mathbb{H})$. In particular, if $\pi\in \operatorname{Irr}(\mathbb{H})$, taking $x= \chi(\pi)\in \operatorname{Fus}[\mathbb{H}]$ yields
    $$\dim_q(\pi)\le \|\chi(\pi)\|_u \le n_\pi\le \dim_q(\pi),$$
    so that $\mathbb{H}$ is of Kac type (see e.g.\ \cite{NT14}).
\end{proof}

\subsection{Second strategy} \label{strategy2} Let $\G$ be a compact quantum group and $B= L^\infty(\mathbb{X}\backslash \G)$ an injective von Neumann algebra such that $\Delta(B)\subseteq B \bar{\otimes} L^\infty(\G)$ (i.e. $B$ is a right coideal-von Neumann algebra in $L^\infty(\G)$). Suppose that the  action $\Delta: B \curvearrowleft \G$ is amenable. By \cite{DR24}*{Proposition 4.3}, the $\G$-$W^*$-algebra $B$ is $\G$-injective and thus $\mathcal{R}(B)= C_r(\mathbb{X}\backslash \G)$ is $\G$-$C^*$-injective \cite{DH24}*{Lemma 3.17}. Then \cite{AK24}*{Proposition 5.8} implies that $\mathbb{X}$ is a compact quasi-subgroup of $\G$. Consequently, if $\mathbb{X}$ is not a compact quasi-subgroup of $\G$, the action $\Delta: B \curvearrowleft \G$ is not amenable.

As an immediate application of the above strategy, we then find:
\begin{Prop}\label{Podlesamenable}
    Consider a non-standard Podleś sphere $\mathbb{X}\backslash SU_q(2)$ for $0 < q < 1$. Then the action $L^\infty(\mathbb{X}\backslash SU_q(2))\curvearrowleft SU_q(2)$ is not amenable.
\end{Prop}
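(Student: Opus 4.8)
The plan is to read the statement as the direct instantiation of the second strategy of Section~\ref{strategy2} with $\G = SU_q(2)$ and $B = L^\infty(\X\backslash SU_q(2))$ the von Neumann algebra of the non-standard Podleś sphere $\X$, and to verify the two inputs that the strategy requires. First I would record that the hypotheses are genuinely met: a Podleś sphere is by construction a right coideal, so $\Delta$ restricts to a $\G$-action $\Delta\colon B \curvearrowleft SU_q(2)$ with $\Delta(B)\subseteq B\bar{\otimes}L^\infty(SU_q(2))$, and $B$ is injective because the Podleś sphere $L^\infty(S^2_{q,c})$ is a type~I von Neumann algebra.

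Next I would run the strategy contrapositively. Assume towards a contradiction that $\Delta\colon B\curvearrowleft SU_q(2)$ is amenable. Then \cite[Proposition 4.3]{DR24} makes $B$ a $\G$-injective $\G$-$W^*$-algebra, \cite[Lemma 3.17]{DH24} upgrades this to $\G$-$C^*$-injectivity of $\mcR(B) = C_r(\X\backslash SU_q(2))$, and \cite[Proposition 5.8]{AK24} then forces $\X$ to be a compact quasi-subgroup of $SU_q(2)$. It therefore suffices to show that a non-standard Podleś sphere is never a compact quasi-subgroup.

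This last point is the heart of the matter. A compact quasi-subgroup $\X$ of $SU_q(2)$ is, by definition, encoded by an idempotent state $\omega$ on $C(SU_q(2))$, with $L^\infty(\X\backslash SU_q(2)) = \{x : (\omega\otimes\id)\Delta(x) = x\}$. By the classification of idempotent states on $SU_q(2)$ due to Franz and Skalski, for $0<|q|<1$ every idempotent state is a Haar idempotent, i.e.\ the Haar state of a genuine closed quantum subgroup; hence every quasi-subgroup coideal is of quotient type $L^\infty(H\backslash SU_q(2))$ for such a subgroup $H$. The \emph{standard} Podleś sphere is exactly the quotient $L^\infty(\T\backslash SU_q(2)) = L^\infty(SU_q(2))^{\T}$, whereas the non-standard spheres $S^2_{q,c}$ with $c\neq 0$ are not of quotient type (by Podleś' classification of coideals of $SU_q(2)$); they therefore arise from no idempotent state and are not quasi-subgroups, giving the desired contradiction.

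The main obstacle is precisely this translation step: the output of the strategy, $\G$-$C^*$-injectivity, only yields the abstract assertion that $\X$ is a quasi-subgroup, and one must match this against the concrete classification of coideals of $SU_q(2)$ to see that the non-standard spheres are excluded. This rests on the nontrivial fact that \emph{all} idempotent states on $SU_q(2)$ are of Haar type, together with the explicit identification of which coideals the quantum subgroups give rise to; granting these two classification results, the remainder is a routine application of the general strategy.
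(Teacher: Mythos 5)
Your proposal is correct and follows exactly the paper's route: the paper's proof is precisely the instantiation of the second strategy, citing \cite{Boc95}*{Corr.\ 23} for injectivity of $L^\infty(\mathbb{X}\backslash SU_q(2))$ and \cite{FST13}*{Theorem 5.1} for the fact that a non-standard Podle\'s sphere is not a compact quasi-subgroup. Your fleshing-out of the latter point via the Franz--Skalski classification of idempotent states is exactly the content of that cited theorem, and your alternative justification of injectivity (type I structure) is equally valid.
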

\begin{proof}
    It is well-known that $\mathbb{X}$ is not a compact quasi-subgroup of $SU_q(2)$ (see e.g.\ \cite{FST13}*{Theorem 5.1}) and that $L^\infty(\mathbb{X}\backslash SU_q(2))$ is injective (\cite{Boc95}*{Corr. 23}). Hence the action $L^\infty(\mathbb{X}\backslash SU_q(2))\curvearrowleft SU_q(2)$ is not amenable. 
\end{proof}

This provides another example of a co-amenable compact quantum group acting non-amenably on a von Neumann algebra.

\section{Outlook}

In this paper, we have shown that amenability and strong amenability coincide for compact/discrete quantum group actions on von Neumann algebras. We have then produced the first examples of amenable discrete quantum groups acting non-amenably on a von Neumann algebra.

We note that in \cite{AV23}, a notion of co-amenability was introduced for coideals of compact quantum groups (=\emph{embeddable} ergodic coactions), and of amenability for their dual discrete coideals. The connection to the amenability conditions studied in this paper remains to be explored. 

We also note that in the ergodic case, direct connections can be made with the theory of tensor C$^*$-categories. Indeed, in this case ergodic actions of $\G$ on von Neumann algebras are in one-to-one correspondence (up to appropriate isomorphism) with cyclic semisimple module W$^*$-categories $\mcD$ for $\mcC = \Rep_u^{\fd}(\G)$ \cite{DCY13}. The $*$-algebra $\mcA \rtimes \G \ltimes \overline{\mcA}$ should then, up to Morita equivalence, be definable directly from $\mcC\curvearrowright \mcD$ as a `relative tube $*$-algebra' $\mcT(\mcC \curvearrowright \mcD)$. This would lead to a relative version of the theory developed in \cites{PV15,GJ16,NY16}, with the tube $*$-algebra loc.\ cit.\ arising as $\mcT(\mcC\boxtimes \mcC^{\opp} \curvearrowright \mcC)$. Details for this will appear elsewhere. 

\emph{Acknowledgements}: The work of K. De Commer was supported by Fonds voor Wetenschappelijk Onderzoek (FWO), grant G032919N. The work of J. De Ro is supported by the FWO Aspirant-fellowship, grant 1162524N. The authors thank A. Skalski for bringing \cite{BMO20} to their attention at an early part of this project. They also thank S. Vaes and G. Pisier for making \cite{Haa93} available to them, and S. Vaes and M. Yamashita for useful feedback about the contents of the paper. We thank the referee for useful comments improving the readability of the paper.

\end{document}